\documentclass[11pt]{amsart}

\usepackage{amscd,amsmath,amssymb,amsfonts,amsthm,mathrsfs}
\usepackage[a4paper,text={160mm,240mm},centering,headsep=5mm,footskip=10mm]{geometry}
\usepackage[all]{xy}

\usepackage[colorlinks,linkcolor=black,anchorcolor=black,citecolor=black]{hyperref}

\usepackage[square,sort,comma,numbers]{natbib}
\usepackage{stmaryrd}
\usepackage{array}
\usepackage{color}

\newtheorem{theorem}{Theorem}[subsection]
\newtheorem{proposition}[theorem]{Proposition}
\newtheorem{lemma}[theorem]{Lemma}
\newtheorem{remark}[theorem]{Remark}
\newtheorem{corollary}[theorem]{Corollary}

\newtheorem{definition}[theorem]{Definition}

\newtheorem{claim}[theorem]{Claim}

\numberwithin{equation}{subsection}
\newtheorem{theoremintr}{Theorem}

\newcommand{\fm}{\mathfrak{m}}

\def\FnuR#1#2{G^{#1}\nu_R(#2)}
\def\eb{\underline{n}}
\def\ebp{{[\underline{n}/p]}}
\def\ul#1{\underline{#1}}
\def\Dp{{[D/p]}}

\def\dlog#1{\displaystyle{\frac{d#1}{#1}}}
\def\FKMRr#1{U^{#1}K^M_r(R)}
\def\FkMRr#1#2{U^{#1}k^M_r(R)_{#2}}
\def\grikMRr#1#2{gr^{#1,i}k^M_r(R)_{#2}}

\def\KMRr{K^M_r(R)}

\def\DRW#1#2#3{W_{#1}\Omega^{#2}_{#3}}
\def\DRWlog#1#2#3{W_{#1}\Omega^{#2}_{#3,\log}}
\def\DR#1#2{\Omega^{#1}_{#2}}
\def\DRlog#1#2{\Omega^{#1}_{#2,\log}}
\def\WXDlog#1#2{W_{#1}\Omega^{#2}_{X|D,\log}}
\def\WXDplog#1#2{W_{#1}\Omega^{#2}_{X|\Dp,\log}}

\def\FDR#1#2#3{G^{#1}\Omega^{#2}_{#3}}

\def\Cni{C^{-1}_{\eb,i}}
\def\wni#1{\omega^{#1}_{\eb,i}}
\def\wrni#1{\omega^{r-1}_{\eb,i}}
\def\wnpi#1{\omega^{#1}_{\ebp,i}}

\def\Zni#1{Z^{#1}_{\eb,i}}

\def\Bni#1{B^{#1}_{\eb,i}}

\def\ZZni#1#2{Z^{#1}_{#2|\eb,i}}

\def\ZZnpi#1#2{Z^{#1}_{#2|\ebp,i}}

\def\BBni#1#2{B^{#1}_{#2|\eb,i}}

\def\BBnpi#1#2{B^{#1}_{#2|\ebp,i}}

\def\Spec{\mathrm{Spec}}
\def\Supp{\mathrm{Supp}}

\def\Hom{\mathrm{Hom}}

\def\sHom{{\mathcal H}om}  
\def\cL{\mathscr{L}}
\def\cX{\mathscr{X}}
\def\cS{\mathscr{S}}
\def\cO{\mathcal{O}}

\def\bN{\mathbb{N}}

\def\qwith{\;\text{with}\;}
\def\Image{\mathrm{Image}}
\def\rmapo #1{\xrightarrow{#1}}
\def\rmapou #1#2{\xrightarrow[#1]{#2}}
\def\Ker{\mathrm{Ker}}
\def\isom{\xrightarrow{\cong}}

\def\Fp{\mathbb{F}_p}
\def\Z{\mathbb{Z}}
\def\FnuR#1#2{G^{#1}\nu_R(#2)}
\def\eb{{\underline n}}
\def\dlog#1{\displaystyle{\frac{d#1}{#1}}}
\def\D#1{D_{\underline{#1}}}
\def\nlam{(n_{\lambda})_{\lambda\in \Lambda}}
\def\Pf{\mathit{Pf}}

\def\RDRWlog#1#2#3{W_{#1}\Omega^{#2}_{X|D_{#3},\log}}
\def\FDRW#1#2#3{W_{#1}\Omega^{#2}_{X|D_{#3}}}

\def\cO{\mathcal{O}}

\def\Fil{\mathrm{Fil}}
\def\un {\underline n}

\def\Q{\mathbb Q}

\begin{document}

	\title[Duality for relative logarithmic de Rham-Witt sheaves]
	{Duality for relative logarithmic de Rham-Witt sheaves and wildly ramified class field theory over finite fields }
	\author{ Uwe Jannsen}
	\address{Fakult\"at f\"ur Mathematik, Universit\"at Regensburg, 93040 Regensburg, Germany}
	\email{uwe.jannsen@mathematik.uni-regensburg.de}
	
	\author{Shuji Saito}
	\address{Interactive Research Center of Science,
		Graduate School of Science and Engineering,
		Tokyo Institute of Technology,
		2-12-1 Okayama, Meguro,
		Tokyo 152-8551,
		Japan}
	\email{sshuji@msb.biglobe.ne.jp}
	
	\author{Yigeng Zhao}
	\address{Fakult\"at f\"ur Mathematik, Universit\"at Regensburg, 93040 Regensburg, Germany}
	\email{yigeng.zhao@mathematik.uni-regensburg.de}
	
	\thanks{The authors are supported by the DFG through CRC 1085 \emph{Higher Invariants} (Universit\"at Regensburg)}

	\begin{abstract}
		In order to study $p$-adic \'etale cohomology of an open subvariety $U$ of a smooth proper variety $X$ over a perfect field of characteristic $p>0$, we introduce new $p$-primary torsion sheaves. It is a modification of the logarithmic de Rham-Witt sheaves of $X$ depending on effective divisors $D$ supported in $X-U$. Then we establish a perfect duality between cohomology groups of the logarithmic de Rham-Witt cohomology of $U$ and an inverse limit of those of the mentioned modified sheaves. Over a finite field, the duality can be used to study wild ramification class field theory for the open subvariety $U$.
	\end{abstract}
	\keywords{logarithmic de Rham-Witt sheaves, class field theory, wild ramification, \'etale duality, quasi-algebraic groups}
	\subjclass[2010]{14F20, 14F35, 11R37, 14G17}
	\date{}
	\maketitle
	\tableofcontents

	
	\setcounter{page}{1}
	\setcounter{section}{0}
	\pagenumbering{arabic}
	
	\section*{Introduction}
	\bigskip
	
	Let $k$ be a perfect field of characteristic $p>0$ and let $X$ be a smooth proper variety of dimension $d$ over $k$. The logarithmic de Rham-Witt sheaves $\DRWlog{m}{r}{X}$ are defined as the subsheaves of the de Rham-Witt sheaves $\DRW{m}{r}{X}$, 
	which are \'etale locally generated by sections $d\log[x_1]_m\wedge \ldots\wedge d\log[x_r]_m $ with $x_\nu \in \cO_X^{\times}$ for all $\nu$ (\cite{illusiederham}). 
	By the Gersten resolution (\cite{rost1996chow},\cite{kerzmilnorfinite},\cite{grossuwa}) and the Bloch-Gabber-Kato theorem (\cite{blochkato}), the $d\log$ map induces an isomorphism of \'etale sheaves
	\begin{align*}\label{blochgabberkato}
	d\log[-]: \mathcal{K}^M_{r,X}/p^m &\xrightarrow{\cong}\DRWlog{m}{r}{X}\;;\;
	\{x_1,\dots,x_r\} \mapsto d\log [x_1]_m \wedge \cdots \wedge d\log [x_r]_m, \tag{1}
	\end{align*}
	where $\mathcal{K}^M_{r,X}$ is the sheaf of Milnor $K$-groups. 
	It is conceived as a $p$-adic analogue of the $\ell$-adic sheaf $\mu_{\ell^m}^{\otimes r}$ with $\ell\neq p$. If $k$ is a finite field, 
	there is a non-degenerated pairing of finite groups due to Milne (\cite{milneduality}):
	$$
	H^i(X,W_m\Omega^r_{X,\log})\times H^{d+1-i}(X,W_m\Omega^{d-r}_{X,\log})\rightarrow H^{d+1}(X,W_m\Omega^d_{X,\log}))\xrightarrow{\text{Tr}} \mathbb Z/p^m\mathbb Z\, .
	$$
	It induces a natural isomorphism
	$$
	H^d(X,W_m\Omega^d_{X,\log})  \cong    H^1(X,\mathbb Z/p^m\mathbb Z)^\vee \cong \pi^{ab}_1(X)/p^m
	$$
	where $A^\vee$ is the Pontryagin dual of a discrete abelian group and $\pi^{ab}_1(X)$ is the maximal abelian quotient of Grothendieck's \'etale fundamental group of $X$. 
	This gives a description of $\pi^{ab}_1(X)/p^m$ in terms of \'etale cohomology with $p$-adic coefficient.
	For $\ell$-adic \'etale cohomology, we also have a non-degenerated pairing of finite groups for a smooth non-proper variety $U$ of dimension $d$ over a finite field $k$ 
	(\cite{SGA41/2} and \cite{saito1989}) 
	$$
	H^i(U,\mathbb Z/\ell^m(j))\times H_c^{2d+1-i}(U,\mathbb Z/\ell^m(d-j))\rightarrow H_c^{2d+1}(U,\mathbb Z/\ell^m(d))\cong \mathbb Z/\ell^m\mathbb Z,
	$$
	which can be used to describe $\pi_1^{ab}(U)/\ell^m$ by $\ell$-adic \'etale cohomology:
	$$
	H^{2d}_c(U,\mathbb Z/\ell^m(d)) \cong H^1(U,\mathbb Z/\ell^m)^\vee \cong  \pi^{ab}_1(U)/\ell^m.
	$$
	In the $p$-adic setting there is no obvious analogue of \'etale cohomology with compact support for logarithmic de Rham-Witt sheaves. 
	\medbreak
	
	In this paper, we propose a new approach. Let $X$ be a proper smooth variety over a
	perfect field $k$ as before, and let $j: U \hookrightarrow X$ be the complement of an effective divisor $D$ such that Supp($D$) has simple normal crossings.  We introduce new $p$-primary torsion sheaves $\DRWlog m r {X|D} $ (see Definition \ref{relative.definition}), which we call \emph{relative logarithmic de Rham-Witt sheaves}. 
	It is defined as the subsheaf of the de Rham-Witt sheaf $\DRW{m}{r}{X}$ 
	which is \'etale locally generated by sections $d\log[x_1]_m\wedge \ldots\wedge d\log[x_r]_m $ with $x_1\in \Ker(\cO_X^{\times} \to \cO_D^{\times})$, and $x_\nu \in j_*\cO_U^{\times}$ for all $\nu$. As in the classical situation, we have the following theorem:
	\begin{theoremintr}[see Theorem \ref{relative.blochkato}]
		The map $d\log$ induces an isomorphism
		\begin{align*}\label{relativeblochgabberkato}
		d\log[-]: \mathcal{K}^M_{r,X|D}/(p^m\mathcal{K}^M_{r,X}\cap \mathcal{K}^M_{r,X|D} ) &\xrightarrow{\cong}\DRWlog{m}{r}{X|D}\;;\; \tag{2}\\
		\{x_1,\dots,x_r\} &\mapsto d\log [x_1]_m \wedge \cdots \wedge d\log [x_r]_m. 
		\end{align*}
	\end{theoremintr}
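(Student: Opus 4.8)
The plan is to deduce the relative isomorphism from the absolute Bloch--Gabber--Kato isomorphism (1) by showing that the relative Milnor $K$-sheaf is already contained in the absolute one, so that the kernel of $d\log$ on $\mathcal{K}^M_{r,X|D}$ can be read off from (1). Throughout I regard both $\mathcal{K}^M_{r,X}$ and $\mathcal{K}^M_{r,X|D}$ as subsheaves of $j_*\mathcal{K}^M_{r,U}$, using that the restriction $\mathcal{K}^M_{r,X}\hookrightarrow j_*\mathcal{K}^M_{r,U}$ is injective (Gersten resolution, with Kerz's improved Milnor $K$-sheaf to accommodate finite residue fields).

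Well-definedness and surjectivity are essentially formal. By construction $\DRWlog{m}{r}{X|D}$ is the subsheaf of $\DRW{m}{r}{X}$ generated by the $d\log$-images of the symbols that generate $\mathcal{K}^M_{r,X|D}$, so $d\log$ carries $\mathcal{K}^M_{r,X|D}$ onto $\DRWlog{m}{r}{X|D}$. Moreover (1) shows that the absolute $d\log$ factors through reduction mod $p^m$, hence annihilates $p^m\mathcal{K}^M_{r,X}\cap\mathcal{K}^M_{r,X|D}$, so the map descends to the asserted quotient.

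The crux, and the step I expect to require the most care, is the inclusion $\mathcal{K}^M_{r,X|D}\subseteq\mathcal{K}^M_{r,X}$. I would check it on generators. For a local symbol $\{x_1,\dots,x_r\}$ of $\mathcal{K}^M_{r,X|D}$, the condition $x_1\in\Ker(\cO_X^\times\to\cO_D^\times)$ gives $\overline{x_1}=1$ at the generic point $\eta_i$ of every component $D_i$ of $D$. Since $x_1$ has valuation $0$ there, the projection formula for the tame symbol, $\partial_{\eta_i}(\{x_1\}\cdot y)=-\{\overline{x_1}\}\cdot\partial_{\eta_i}(y)$, together with $\{\overline{x_1}\}=\{1\}=0$, yields $\partial_{\eta_i}\{x_1,\dots,x_r\}=0$ irrespective of the poles of the remaining $x_\nu$ along $D$; the single factor $\{\overline{x_1}\}=0$ absorbs everything. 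Thus every relative symbol is unramified at all codimension-one points of $X$ (those lying on $U$ being handled by the Gersten sequence of $U$), and by exactness of the Gersten complex at its first term it lies in $\mathcal{K}^M_{r,X}$. The delicate point is to have this resolution available in the stated generality: over a possibly finite field $k$ one must use Kerz's improved Milnor $K$-sheaf, for which the Gersten resolution holds and coincides with $\mathcal{K}^M_{r,X}$ on the smooth $X$.

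Granting this inclusion, the theorem follows at once from (1). Restricting the absolute $d\log$, whose kernel is $p^m\mathcal{K}^M_{r,X}$ by (1), to the subsheaf $\mathcal{K}^M_{r,X|D}$ gives
\[
\Ker\bigl(d\log\colon \mathcal{K}^M_{r,X|D}\to\DRWlog{m}{r}{X|D}\bigr)=p^m\mathcal{K}^M_{r,X}\cap\mathcal{K}^M_{r,X|D},
\]
which combined with the surjectivity above yields the desired isomorphism; as a byproduct one also reads off $\DRWlog{m}{r}{X|D}\subseteq\DRWlog{m}{r}{X}$.
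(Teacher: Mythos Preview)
Your proof is correct and follows essentially the same route as the paper: reduce to the absolute isomorphism (1) via the inclusion $\mathcal{K}^M_{r,X|D}\subseteq\mathcal{K}^M_{r,X}$, then read off the kernel from the commutative square with the absolute $d\log$. The paper packages this as exactly that square and cites R\"ulling--Saito (its Proposition~\ref{milnor.transition}/Corollary~\ref{transition.injective}) for the inclusion, whereas you supply the direct tame-symbol argument; the content is the same.
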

	Here $\mathcal{K}^M_{r,X|D}$ is the sheaf of relative Milnor $K$-groups which has been studied by one of the authors (S. Saito) and K. R\"ulling in \cite{ruellingsaito}. 
	
	If $D_1\geq D_2$, we have inclusions (see Proposition \ref{transition.injective})
	\begin{equation*}\label{inclusion}
	\DRWlog m r {X|D_1} \subseteq\DRWlog m r {X|D_2} \subseteq \DRWlog m r X,
	\end{equation*}  
	and thus obtain a pro-system of $\Z/p^m\Z$-sheaves $\lq\lq\varprojlim\limits_D"\DRWlog{m}{r}{X|D}$, where $D$ runs over the set of all effective divisors with $\text{Supp}(D)\subset X-U$. 
	
	In case $m=1$ these sheaves are related to sheaves of differential forms by the exact sequence (see Theorem \ref{logtocoherent}):
	\begin{equation}\label{log.to.coh}
	0 \to \Omega_{X|D,\log}^r \to \Omega_{X|D}^r \xrightarrow{1-C^{-1}} \Omega_{X|D}^r/d\Omega_{X|D}^{r-1} \to 0,\tag{3}\end{equation}
	where $\Omega_{X|D}^r=\Omega_{X}^r(\log D)\otimes_{\cO_X}\cO_X(-D)$ and $C^{-1}$ is the inverse Cartier morphism. 
	In order to extend the above exact sequence to the case $m>1$,
	we need introduce \emph{the filtered relative de Rham-Witt complex $W_m\Omega_{X|D}^\bullet$} for which we have $W_1\Omega_{X|D}^\bullet=\Omega_{X|D}^\bullet$
	(see \S \ref{defn.filterdRW} and Theorem \ref{exactproobj}). Its construction uses the de Rham-Witt complexes in log geometry \cite{hyodokato}, which can be seen as the higher analogue of $\Omega_{X}^r(\log D)$.

	Using the generalization of \eqref{log.to.coh} to the case $m>1$,
	we can define a pairing between $\DRWlog{m}{r}{U}$ and the pro-system $\lq\lq \varprojlim\limits_D"\DRWlog{m}{d-r}{X|D}$  and obtain the following theorem:
	
	\begin{theoremintr}[see Theorem \ref{duality.thm.finite}]
		Let $X$, $D$ and $U$ be as above and assume that $k$ is finite.
		Then $H^{j}(X, \DRWlog m {r} {X|D})$ are finite and there are natural perfect pairings of topological abelian groups
		\[ H^i(U, \DRWlog m r U) \times \varprojlim_D H^{d+1-i}(X, \DRWlog m {d-r} {X|D}) \to H^{d+1}(X, \DRWlog m d X) \xrightarrow{\text{Tr}} \Z/p^m\Z,\]
		where the first group is endowed with discrete topology, and the second is endowed with profinite topology.
	\end{theoremintr}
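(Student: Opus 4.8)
The plan is to deduce the stated cohomological duality from a perfect pairing of pro-sheaves on the \'etale site of $X$, and then to feed the resulting pairing into Milne's duality \cite{milneduality} for the proper variety $X$, which supplies both the trace map and the finite-field base case. First I would construct the pairing at the sheaf level. The key point is that a local section of $j_*\DRWlog m r U$, possibly with poles along $D$, wedged with a section of $\DRWlog m {d-r}{X|D}$, which vanishes to the order prescribed by $D$, produces a section of $\DRWlog m d X$ with no poles. Hence for every effective $D$ with $\Supp(D)\subset X-U$ the wedge product of $d\log$-forms (Theorem \ref{relative.blochkato}) induces
\[ j_*\DRWlog m r U \times \DRWlog m {d-r}{X|D} \to \DRWlog m d X, \]
and passing to the limit over $D$ gives a pairing of $j_*\DRWlog m r U$ with $\varprojlim_D\DRWlog m {d-r}{X|D}$.

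Next I would show this sheaf pairing is perfect by reducing it to Grothendieck--Serre duality through the two-term descriptions of the logarithmic sheaves. Using the exact sequence \eqref{log.to.coh} (Theorem \ref{logtocoherent}) for $m=1$, and its extension via the filtered complex $W_m\Omega^\bullet_{X|D}$ (Theorem \ref{exactproobj}) for $m>1$, each logarithmic sheaf is the kernel of $1-C^{-1}$ on coherent de Rham--Witt sheaves. For $m=1$ the crucial coherent fact is that the wedge pairing identifies $\Omega^r_X(\log D)\otimes\cO_X(D-D_{\mathrm{red}})$ with the Serre dual of $\DR{d-r}{X|D}$, the twist $\cO_X(-D)$ in $\DR{r}{X|D}=\Omega^r_X(\log D)\otimes\cO_X(-D)$ being exactly what forces the product, which a priori lies in $\Omega^d_X(\log D)=\omega_X(D_{\mathrm{red}})$, to land in $\omega_X$. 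As $D$ grows, the colimit of these Serre-dual partners recovers $j_*\Omega^r_U$, so the colimit $j_*$ on one side matches the limit $\varprojlim_D$ on the other. The compatibility of $1-C^{-1}$ with the residue/trace pairing then upgrades the coherent Serre duality to a perfect duality of the logarithmic sheaves for all $m$.

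Finally I would globalize over the finite field $k$. Finiteness of $H^j(X,\DRWlog m {d-r}{X|D})$ follows from coherence of the de Rham--Witt sheaves, finiteness of coherent cohomology over $k$, and the Artin--Schreier--Witt structure $1-C^{-1}$. Using $H^i(U,\DRWlog m r U)=H^i(X,Rj_*\DRWlog m r U)$, the sheaf pairing induces the cohomological pairing into $H^{d+1}(X,\DRWlog m d X)$ followed by Milne's trace to $\Z/p^m\Z$; the extra degree $+1$ is contributed by $H^1$ of the finite base field. Perfectness then follows by combining the perfect sheaf-level pairing with Milne's duality on $X$ and comparing the two hypercohomology spectral sequences. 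Since $H^i(U,\DRWlog m r U)$ is a filtered colimit of finite groups, hence discrete, and $\varprojlim_D H^{d+1-i}(X,\DRWlog m {d-r}{X|D})$ is an inverse limit of finite groups, hence profinite, the perfect pairing is precisely a Pontryagin duality of topological abelian groups.

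The hard part will be the coherent duality at the full de Rham--Witt level (the middle step): the filtered complex $W_m\Omega^\bullet_{X|D}$ of Theorem \ref{exactproobj} is considerably more intricate than the $m=1$ forms, and its compatibility with $1-C^{-1}$ under Serre duality requires real care. A secondary technical point is the limit analysis, where one must verify the Mittag--Leffler condition ensuring $\varprojlim^1_D=0$, so that $\varprojlim_D$ commutes with cohomology and the resulting profinite structure matches the Pontryagin dual of the discrete group $H^i(U,\DRWlog m r U)$.
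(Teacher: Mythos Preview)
Your overall strategy is close to the paper's, but two points need correction.

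First, your claimed sheaf pairing $j_*\DRWlog m r U \times \DRWlog m {d-r}{X|D} \to \DRWlog m d X$ for a \emph{fixed} $D$ is not well-defined: a local section of $j_*\DRWlog m r U$ may have poles of arbitrarily high order along $\Supp(D)$, so wedging with something that vanishes only to order $D$ does not force the product to be regular. The paper deals with this by an explicit ind-pro construction: it replaces $Rj_*\DRWlog m r U$ by the ind-system of two-term complexes $W_m\mathscr{F}^{r,\bullet}_{-\un}=[Z_1\FDRW m r {-\un}\xrightarrow{1-C}\FDRW m r {-\un}]$ (bounded pole order $\un$), and replaces the relative log sheaf by the pro-system $W_m\mathscr{G}^{d-r,\bullet}_{\un+\ul 1}=[\FDRW m {d-r}{\un+\ul 1}\xrightarrow{1-F}\FDRW m {d-r}{\un+\ul 1}/dV^{m-1}\Omega^{d-r-1}_{X|D_{p^{m-1}\un}}]$. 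The wedge product then lands in $W_m\mathscr{H}^\bullet\simeq \DRWlog m d X$ level by level, and one passes to the limit. Note the asymmetry: the map is $1-C$ on the $U$-side and $1-F$ on the $X|D$-side; both sides are not simultaneously written as kernels of $1-C^{-1}$.

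Second, and more importantly, the paper completely sidesteps what you flag as the hard part (Serre duality at the full de Rham--Witt level for $m>1$). It never proves any coherent duality for $W_m\Omega^\bullet_{X|D}$ when $m>1$. Instead it reduces to $m=1$ by induction, matching the exact sequence
\[ 0\to W_{m-1}\Omega^{d-r}_{X|[D/p],\log}\xrightarrow{\ul p} W_m\Omega^{d-r}_{X|D,\log}\to \Omega^{d-r}_{X|D,\log}\to 0 \]
of Theorem~\ref{Kummerlogform} against the standard sequence $0\to \Omega^r_{U,\log}\xrightarrow{\ul p^{m-1}} W_m\Omega^r_{U,\log}\xrightarrow{R} W_{m-1}\Omega^r_{U,\log}\to 0$ on $U$. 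Taking cohomology, Pontryagin duals, and $\varprojlim_D$ (exact here because all groups are finite, so your Mittag--Leffler worry is trivially handled), the five lemma reduces level $m$ to levels $m-1$ and $1$. Only the case $m=1$ is then settled by Grothendieck--Serre duality, via the two perfect coherent pairings \eqref{twisted.pairing-1} and \eqref{twisted.pairing-2} and the two-term identifications from Theorem~\ref{logtocoherent}. So the route you outline for general $m$ is not needed and, as you suspected, would be the genuinely difficult part of your plan; the paper's induction avoids it entirely.
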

	
	From the case $i=1$ and $r=0$ of the above theorem, we get a natural isomorphism
	$$
	\varprojlim\limits_{D} H^d(X,\DRWlog m d {X|D})\xrightarrow{\cong} H^1(U,\Z/p^m\Z)^\vee\cong \pi^{\text{ab}}_1(U)/p^m,
	$$
	which gives rise a series of quotients $\pi^{\text{ab}}_1(X,D)/p^m$ of $\pi^{\text{ab}}_1(U)/p^m$ using the inverse limit. It is thought of as classifying abelian \'etale covering of $U$ whose degree divides $p^m$ and ramification is bounded by the divisor $D$. 
	
	
	\medbreak
	
	One of the authors (Y. Zhao) \cite{zhaoduality} has proved a similar duality theorem for a projective semi-stable scheme over an equi-characteristic discrete valuation ring $k[[t]]$ with $k$ finite.
	\medbreak

	When the base field $k$ is prefect but not necessarily finite, we follow the method of Milne \cite{milneduality} and work in the category $\cS(p^m)$ of $\Z/p^m\Z$-sheaves on perfect \'etale site $(\Pf/k)_{\text{\'et}}$ (see \S \ref{relative.perfect.site}). 
	Let $D^b(\cS(p^m))$ be the derived category of bounded complexes in $\cS(p^m)$. We then get from $X,\;D$ objects of $D^b(\cS(p^m))$:
	\[R\pi_*\DRWlog m {d-r} {X|D}\;\; \text{ and }\;\; 
	R\pi_*Rj_*\DRWlog m r U,\]
	where $\pi:X\to S=\Spec(k)$ is the structure morphism and $j:U\to X$ is the open immersion. Then our duality theorem reads:
	
	\begin{theoremintr}[see Theorem \ref{duality.thm.perfect}]
		There are a natural isomorphism in $D^b(\cS(p^m))$:
		\[	 R\varprojlim\limits_{D} R\pi_*\DRWlog m {d-r} {X|D} \xrightarrow{\cong} R\sHom_{D^b(\cS(p^m))}( R\pi_*Rj_*\DRWlog m r U , \Z/p^m\Z)[-d],\]
		where $R\varprojlim\limits_{D}$ denotes the homotopy limit over effective Cartier divisors supported on $X-U$.
	\end{theoremintr}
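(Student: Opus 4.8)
The plan is to reduce the asserted isomorphism in $D^b(\cS(p^m))$ to a \emph{finite-level} relative duality for each divisor $D$, and then to pass to the limit over $D$; throughout I would follow Milne's method \cite{milneduality}, reinterpreted over the perfect \'etale site $(\Pf/k)_{\text{\'et}}$. First I would pin down the pairing. By the wedge product of de Rham--Witt forms, multiplying a section of $j_*\DRWlog m r U$ with a section of $\DRWlog m {d-r} {X|D}$ yields a section of the \emph{absolute} sheaf $\DRWlog m d X$: the vanishing condition along $D$ built into the relative sheaf (its leading $d\log$-entry lies in $\Ker(\cO_X^\times\to\cO_D^\times)$) cancels the poles of the first factor, so the product extends over $D$. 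Composing with the trace (fundamental class) $R\pi_*\DRWlog m d X\to\Z/p^m\Z[-d]$ produces, for each $D$, a morphism
\[ R\pi_*\DRWlog m {d-r} {X|D}\longrightarrow R\sHom_{D^b(\cS(p^m))}\big(R\pi_*\,\mathcal F_D,\,\Z/p^m\Z\big)[-d], \]
where $\mathcal F_D$ is the logarithmic de Rham--Witt sheaf with poles along $\Supp(D)$ bounded by $D$; the inductive system $\{\mathcal F_D\}_D$ is designed so that its homotopy colimit computes $Rj_*\DRWlog m r U$.

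Second, I would prove that each finite-level morphism is an isomorphism. The key device is the exact sequence \eqref{log.to.coh} and its level-$m$ generalization through the filtered relative de Rham--Witt complex $W_m\Omega^\bullet_{X|D}$ (Theorem \ref{exactproobj}): each logarithmic sheaf is realized as the homotopy fiber of $1-C^{-1}$ on a coherent object, giving an exact triangle
\[ \DRWlog m {d-r} {X|D}\to W_m\Omega^{d-r}_{X|D}\xrightarrow{\,1-C^{-1}\,}W_m\Omega^{d-r}_{X|D}/dW_m\Omega^{d-r-1}_{X|D}\to \DRWlog m {d-r} {X|D}[1]. \]
Duality for the logarithmic sheaves then follows from two inputs that dualize this triangle termwise: (i) Grothendieck--Serre duality for the coherent complexes $W_m\Omega^\bullet_{X|D}$, where the residue pairing identifies the Serre dual of the relative sheaf $\Omega^{d-r}_{X|D}=\Omega^{d-r}_X(\log D)\otimes\cO_X(-D)$ with the pole-bounded sheaf $\Omega^r_X(\log D)\otimes\cO_X(D-D_{\mathrm{red}})$, landing in the dualizing sheaf $\Omega^d_X$; and (ii) the self-adjointness of the Cartier operator under this pairing, which makes $1-C^{-1}$ dual to $1-C^{-1}$ up to sign and hence lets one match homotopy fibers. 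A d\'evissage in $m$, using the exact sequences relating $W_m$, $W_{m-1}$ and $W_1$ via $R$ and $V$, reduces the compatibility checks to the explicit case $m=1$.

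Finally, I would apply $R\varprojlim_D$ to the finite-level isomorphisms. Since $R\sHom(-,\Z/p^m\Z)$ sends a homotopy colimit in its first argument to a homotopy limit, the right-hand side becomes $R\varprojlim_D R\sHom(R\pi_*\mathcal F_D,\Z/p^m\Z)[-d]\simeq R\sHom(\mathrm{hocolim}_D R\pi_*\mathcal F_D,\Z/p^m\Z)[-d]$, which by the identification $\mathrm{hocolim}_D R\pi_*\mathcal F_D\simeq R\pi_*Rj_*\DRWlog m r U$ is precisely the right-hand side of the theorem, while the left-hand side is $R\varprojlim_D R\pi_*\DRWlog m {d-r} {X|D}$. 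The hard part will be step two: constructing the coherent Serre duality for the filtered complexes $W_m\Omega^\bullet_{X|D}$ so that it is simultaneously compatible with the logarithmic (Hyodo--Kato) structure \cite{hyodokato}, with the relative twist by $\cO_X(-D)$, and with the Cartier/Frobenius operator, so that the $1-C^{-1}$-triangles dualize on the nose. A secondary but essential point is the limit bookkeeping: identifying $\mathrm{hocolim}_D\mathcal F_D$ with $Rj_*\DRWlog m r U$ (hence controlling the higher direct images $R^qj_*$ along $D$) and verifying the Mittag-Leffler type conditions that make $R\varprojlim_D$ well-behaved, which over a finite field is underwritten by the finiteness of the groups $H^j(X,\DRWlog m {d-r} {X|D})$.
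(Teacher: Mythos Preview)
Your overall architecture---pair, reduce to a coherent duality, pass to the limit---matches the paper's, and your identification of the Serre dual of $\Omega^{d-r}_X(\log D)\otimes\cO_X(-D)$ with $\Omega^r_X(\log D)\otimes\cO_X(D-D_{\mathrm{red}})$ is exactly the pairing \eqref{perfect.logform.pairing}--\eqref{twisted.pairing-2}. But there is a genuine gap in your step two. The exact triangle you write,
\[
\DRWlog m {d-r} {X|D}\to W_m\Omega^{d-r}_{X|D}\xrightarrow{\,1-F\,}W_m\Omega^{d-r}_{X|D}/dW_m\Omega^{d-r-1}_{X|D}\to[1],
\]
is \emph{not} available at a fixed finite $D$ when $m>1$. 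Theorem \ref{exactproobj} is explicitly a statement about pro-sheaves: the kernel of $1-F$ on $\FDRW m {d-r}{\un}$ is $W_m\Omega^{d-r}_{X|D_{p^{m-1}\un},\log}$, not $W_m\Omega^{d-r}_{X|D_{\un},\log}$, and the surjectivity of $1-F$ likewise holds only after allowing $D$ to shrink. So you cannot realize $\DRWlog m {d-r} {X|D}$ as a homotopy fiber of coherent objects at level $D$, and hence cannot prove a finite-level duality for the logarithmic sheaves at general $m$ in the way you describe.

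The paper circumvents this by reordering your steps. It first carries out the d\'evissage to $m=1$ at the level of the map \eqref{perfect.duality} itself, using the Kummer-type sequence of Theorem \ref{Kummerlogform} (which relates $D$ to $[D/p]$ and hence is also really a limit statement). Only after reducing to $m=1$ does it invoke the finite-level coherent comparison: for $m=1$, Theorem \ref{logtocoherent} is a genuine exact sequence at each $D$, so $\DRlog{d-r}{X|D_{\un+\ul 1}}\simeq W_1\mathscr{G}^{d-r,\bullet}_{\un+\ul 1}$ honestly, and one proves
\[
R\pi_*W_1\mathscr{G}^{d-r,\bullet}_{\un+\ul 1}\;\xrightarrow{\;\cong\;}\;R\sHom_{\cS(p)}\big(R\pi_*W_1\mathscr{F}^{r,\bullet}_{-\un},\Z/p\Z\big)[-d]
\]
for each $\un$ from the coherent duality of Proposition 5.2.2 applied to the perfect pairings \eqref{twisted.pairing-1}--\eqref{twisted.pairing-2}. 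Taking $R\varprojlim_{\un}$ and identifying $\varinjlim_{\un}W_1\mathscr{F}^{r,\bullet}_{-\un}\simeq Rj_*\DRlog r U$ (affineness of $j$) finishes the case $m=1$. In short: move the d\'evissage in $m$ \emph{before} the coherent comparison, and recognize that both the Kummer sequence and the comparison with the two-term complexes $W_m\mathscr{G}^\bullet$ are only available after passing to the pro-system in $D$.
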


	\bigskip

	The paper is organized as follows.

	In \S 1, we study the two important results on the relative logarithmic de Rham-Witt sheaves: the first one is a computation of the kernel of the restriction map  $R^{m-1}: \DRWlog{m}{r}{X|D} \to \DRWlog{1}{r}{X|D}$; the second is the exact sequence (\ref{log.to.coh}).
	
	In order to define the desired pairing, we introduce the filtered de Rham-Witt complexes in \S2, and study the behavior of Frobenius and Verschiebung on these complexes. 
	
	Using two-term complexes, we define the pairing in \S 3 and prove its perfectness when the base field $k$ is finite in \S 4. The last section \S 5 is on the duality over a general perfect field.

	\bigskip
	\textbf{Acknowledgments}
	The authors thank Moritz Kerz for his advice, especially on the construction of a pairing.  The authors would like to thank the anonymous referee for his/her numerous valuable comments and suggestions to improve the quality of this paper. 
	In particular, his/her suggestion on a simplification of the construction of the filtered de Rham-Witt complexes in \S 2 is invaluable.
	\bigskip
	
	\section{Relative logarithmic de Rham-Witt sheaves}
	\bigskip
	
	Let $X$ be a smooth proper variety of dimension $d$ over a perfect field $k$ of characteristic $p>0$,  let $D$ be an effective divisor such that $\Supp(D)$ is a simple normal crossing divisor on $X$, and let $j: U:=X-D \hookrightarrow X$ be the complement of $D$. 
	\subsection{Basic properties}
	\begin{definition}\label{relative.definition}
		For $r\in \bN$ let
		$$
		W_m\Omega^r_{X|D,\log}\subset j_\ast W_m \Omega^r_{U,\log}
		$$
		be the subsheaf generated \'{e}tale locally by sections
		$$
		d\log[x_1]_m\wedge \ldots\wedge d\log[x_r]_m \ \text{with} \ \ x_1 \in \Ker(\cO_X^{\times} \to \cO_D^{\times}) , x_{\nu} \in j_*\mathcal{O}_U^{\times} \ \text{for all}\  \nu.
		$$
	\end{definition}

	For $r\in \mathbb{N}$ let $\mathcal{K}^M_{r,X}$ be the $r$-th Milnor $K$-sheaf on  $X_{\text{\'et}}$ given by 
	\[ V \mapsto \Ker(\bigoplus_{\eta\in V^{(0)}}K^M_r(k(\eta))\xrightarrow{\oplus\partial_x}\bigoplus_{x\in V^{(1)}} K^M_{r-1}(k(x)))  \;\;\text{ for an \'etale $V\to X$,}    \]
	where $V^{(i)}$ is the set of points of codimension $i$ in $V$, for $i=0,1$, and $\partial_x: K^M_r(k(\eta)) \to K^M_r(k(x))$ is the tame symbol from \cite[\S 4]{basstate}. 
	By \cite[Prop.10 (8) and Thm.13]{kerzmilnorfinite}, $\mathcal{K}^M_{r,X}$ is \'etale locally generated by symbols 
	$\{x_1,\cdots, x_r \}$ with $x_i\in \cO_{X,x}^{\times}$. 
	We have a natural isomorphism of \'etale sheaves
	\begin{align}\label{dlogiso}
	d\log[-]: \mathcal{K}^M_{r,X}/p^m &\xrightarrow{\cong} W_m\Omega^r_{X,\log}\\
	\{x_1,\dots,x_r\} &\mapsto d\log [x_1]_m \wedge \cdots \wedge d\log [x_r]_m. \notag
	\end{align}
	This follows from the Gersten resolutions of $\epsilon_*\mathcal{K}^M_{r,X}$
	and $\epsilon_*W_m\Omega^r_{X,\log}$ from \cite{rost1996chow} and \cite{grossuwa}
	together with the Bloch-Gabber-Kato theorem\cite{blochkato}, where 
	$\epsilon: X_{\text{\'et}}\to X_{\text{Zar}}$ is the map of sites.
	
	\begin{definition}(\cite[Def. 2.4]{ruellingsaito})
		For $r \in \mathbb{N}$, we define the relative Milnor $K$-sheaf 
		$\mathcal{K}^M_{r, X|D}$ to be image of the map
		\[ \Ker(\cO_X^{\times} \to \cO_D^{\times}) \otimes_{\Z} j_*\mathcal{K}^M_{r-1, U} \to  j_*\mathcal{K}^M_{r, U};\ x\otimes \{x_1, \cdots, x_{r-1} \} \mapsto \{x, x_1,\cdots, x_r \}.     \]
	\end{definition}

	Using some symbol calculations, we get the following proposition: 
	\begin{proposition}(\cite[Cor. 2.9]{ruellingsaito})\label{milnor.transition}
		Let $D_1,D_2$ be two effective divisors on $X$ whose supports are simple normal crossing divisors. Assume $D_1 \leq D_2$. Then we have the inclusions of sheaves
		\[ \mathcal{K}^M_{r,X|D_2}\subset \mathcal{K}^M_{r,X|D_1}\subset \mathcal{K}^M_{r,X}.\]
	\end{proposition}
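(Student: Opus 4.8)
The plan is to treat the two inclusions separately; only the second requires symbol calculus, the first being formal. \emph{For the inclusion $\mathcal{K}^M_{r,X|D_2}\subset \mathcal{K}^M_{r,X|D_1}$:} since $D_1\le D_2$, the ideal sheaves satisfy $\cO_X(-D_2)\subseteq \cO_X(-D_1)$, so a unit congruent to $1$ modulo the ideal of $D_2$ is a fortiori congruent to $1$ modulo the ideal of $D_1$. This yields
\[\Ker(\cO_X^{\times}\to\cO_{D_2}^{\times})\subseteq \Ker(\cO_X^{\times}\to\cO_{D_1}^{\times}).\]
Tensoring with $j_*\mathcal{K}^M_{r-1,U}$ and taking images in $j_*\mathcal{K}^M_{r,U}$, the defining surjection for $D_2$ factors through the one for $D_1$, whence the inclusion.

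\emph{For the inclusion $\mathcal{K}^M_{r,X|D_1}\subset \mathcal{K}^M_{r,X}$:} first I would reinterpret the target via its defining kernel. For an irreducible étale $V\to X$ with function field $k(V)$, a class $\alpha\in j_*\mathcal{K}^M_{r,U}(V)\subseteq K^M_r(k(V))$ lies in $\mathcal{K}^M_{r,X}(V)$ exactly when the tame symbol $\partial_y(\alpha)$ vanishes for every $y\in V^{(1)}$. By the very definition of $j_*\mathcal{K}^M_{r,U}$ these residues already vanish at the codimension-one points away from $D$, so the content is to verify vanishing of $\partial_y$ at the generic points $y$ of the components of $\Supp(D)$. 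Since residues are homomorphisms and $\mathcal{K}^M_{r,X|D_1}$ is generated étale locally by elements $\{x\}\cdot\xi$ with $x\in\Ker(\cO_X^{\times}\to\cO_{D_1}^{\times})$ and $\xi\in j_*\mathcal{K}^M_{r-1,U}$, it suffices to check $\partial_y(\{x\}\cdot\xi)=0$ on such generators.

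The computation I would use rests on the Leibniz rule for the tame symbol attached to $y$: for a unit $x$ at $y$ one has $\partial_y(\{x\}\cdot\xi)=-\{\bar x\}\cdot\partial_y(\xi)$, where $\bar x$ is the image of $x$ in the residue field $k(y)$. Granting this, both relevant cases are immediate. At a point $y\notin\Supp(D)$ we have $\partial_y(\xi)=0$, since $\xi$ is a section of $j_*\mathcal{K}^M_{r-1,U}$ and is therefore unramified there; while at a point $y\in\Supp(D)$ the congruence $x\equiv 1$ modulo the ideal of $D_1$ forces $\bar x=1$ in $k(y)$, so $\{\bar x\}=\{1\}=0$. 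In either case $\partial_y(\{x\}\cdot\xi)=0$, and the inclusion follows.

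The one genuinely nontrivial step is the residue computation itself, so I expect the main obstacle to be a clean derivation of the Leibniz rule. Reducing to it from the two defining properties of $\partial_y$ — vanishing on symbols of units, and $\{\pi\}\cup(\text{units})$ mapping to the symbol of the residues — requires writing each entry of $\xi$ as $\pi^{a}u$ and using the relation $\{\pi,\pi\}=\{\pi,-1\}$ to reduce to at most one uniformizer; this expansion is precisely the "symbol calculation" alluded to. Everything else is bookkeeping with ideal sheaves and with the kernel-of-residues description of $\mathcal{K}^M_{r,X}$.
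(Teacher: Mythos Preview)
The paper does not actually prove this statement; it records it as \cite[Cor.~2.9]{ruellingsaito} after the remark ``Using some symbol calculations, we get the following proposition.'' Your argument is correct and is precisely the symbol calculation that the citation encapsulates: the projection formula $\partial_y(\{x\}\cdot\xi)=-\{\bar x\}\cdot\partial_y(\xi)$ for $x$ a unit at $y$, together with $\bar x=1$ at the generic points of $\Supp(D_1)$, forces all residues to vanish and gives $\mathcal{K}^M_{r,X|D_1}\subset\mathcal{K}^M_{r,X}$.

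One small caveat on the first inclusion: your formal argument implicitly uses a single open $U$ and a single $j_*\mathcal{K}^M_{r-1,U}$ for both $D_1$ and $D_2$. This is valid when $\Supp(D_1)=\Supp(D_2)$, which is the only case the paper actually uses (the pro-systems run over $\Sigma$, where all multiplicities are $\geq 1$). If $\Supp(D_1)\subsetneq\Supp(D_2)$, however, the auxiliary factor $\xi$ lies a priori only in $j_{2*}\mathcal{K}^M_{r-1,U_2}\supsetneq j_{1*}\mathcal{K}^M_{r-1,U_1}$, and exhibiting $\{x\}\cdot\xi$ as a sum of generators for $\mathcal{K}^M_{r,X|D_1}$ requires a further symbol manipulation beyond residue-vanishing; this is the extra content handled in the cited reference.
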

	
	\begin{corollary}\label{transition.injective}
		Under the assumption of Proposition \ref{milnor.transition},  we have inclusions
		\[ W_m\Omega^r_{X|D_2,\log}\subset W_m\Omega^r_{X|D_1,\log}\subset W_m\Omega^r_{X,\log}.\]
	\end{corollary}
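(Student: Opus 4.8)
The plan is to transport the inclusions of relative Milnor $K$-sheaves furnished by Proposition \ref{milnor.transition} through the $d\log$ map. The mechanism is the identification, for any effective divisor $D$ with simple normal crossing support,
\[ W_m\Omega^r_{X|D,\log} = \Image\bigl( d\log[-]\colon \mathcal{K}^M_{r,X|D} \longrightarrow j_* W_m\Omega^r_{U,\log}\bigr), \]
where $d\log[-]$ denotes the composite of the inclusion $\mathcal{K}^M_{r,X|D}\subset j_*\mathcal{K}^M_{r,U}$ with the pushforward along $j$ of the $d\log$ isomorphism $\mathcal{K}^M_{r,U}/p^m\xrightarrow{\cong}W_m\Omega^r_{U,\log}$ on $U$.

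I would verify this identification \'etale-locally on $X$. By \cite[Def.\ 2.4]{ruellingsaito} the sheaf $\mathcal{K}^M_{r,X|D}$ is generated inside $j_*\mathcal{K}^M_{r,U}$ by symbols $\{x_1,\dots,x_r\}$ with $x_1\in\Ker(\cO_X^\times\to\cO_D^\times)$ and $x_\nu\in j_*\cO_U^\times$ for all $\nu$; by naturality of $d\log$, the composite above sends such a symbol to $d\log[x_1]_m\wedge\cdots\wedge d\log[x_r]_m$, and these are exactly the \'etale-local generators of $W_m\Omega^r_{X|D,\log}$ in Definition \ref{relative.definition}. Hence the image is $W_m\Omega^r_{X|D,\log}$. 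I stress that this uses only the definitions and the naturality of $d\log$, and in particular not the (later) relative Bloch--Gabber--Kato isomorphism of Theorem \ref{relative.blochkato}.

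Granting the identification, the corollary follows formally: Proposition \ref{milnor.transition} gives $\mathcal{K}^M_{r,X|D_2}\subset\mathcal{K}^M_{r,X|D_1}\subset\mathcal{K}^M_{r,X}$ inside $j_*\mathcal{K}^M_{r,U}$, and applying the morphism $d\log[-]$, together with the fact that the image functor preserves inclusions, yields
\[ W_m\Omega^r_{X|D_2,\log}\subset W_m\Omega^r_{X|D_1,\log}\subset \Image\bigl(d\log[-]\colon\mathcal{K}^M_{r,X}\to j_* W_m\Omega^r_{U,\log}\bigr). \]
The remaining point is to identify this last image with $W_m\Omega^r_{X,\log}$ itself. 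For this I would use that the restriction map $W_m\Omega^r_{X,\log}\to j_* W_m\Omega^r_{U,\log}$ is injective: by the Gersten resolution of $W_m\Omega^r_{X,\log}$ (\cite{grossuwa}) a section is determined by its image at the generic point, hence by its restriction to the dense open $U$, and \eqref{dlogiso} is compatible with this restriction.

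Accordingly, the substance of the argument is the second inclusion, which genuinely needs the $K$-theoretic input of Proposition \ref{milnor.transition}; the first inclusion is in fact immediate from Definition \ref{relative.definition}, since $D_1\le D_2$ gives $\cO_X(-D_2)\subset\cO_X(-D_1)$ and hence $\Ker(\cO_X^\times\to\cO_{D_2}^\times)\subset\Ker(\cO_X^\times\to\cO_{D_1}^\times)$, so every generating section of $W_m\Omega^r_{X|D_2,\log}$ already generates $W_m\Omega^r_{X|D_1,\log}$. The main obstacle I anticipate is therefore not the formal deduction but the careful matching of generators underlying the displayed identification, together with the injectivity and compatibility statement needed to regard all three sheaves as subsheaves of the common ambient sheaf $j_* W_m\Omega^r_{U,\log}$.
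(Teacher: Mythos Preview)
Your proposal is correct and follows essentially the same route as the paper: the paper's proof is the single sentence ``This follows from the fact that the sheaf $W_m\Omega^r_{X|D,\log}$ is the image of $\mathcal{K}^M_{r,X|D}$ under the map $d\log[-]$,'' which is precisely your key identification, after which the inclusions of Proposition \ref{milnor.transition} transport directly. Your additional care in explaining why all three sheaves sit inside the common ambient $j_*W_m\Omega^r_{U,\log}$ (via the Gersten-type injectivity for $W_m\Omega^r_{X,\log}$) is a reasonable elaboration of a point the paper leaves implicit.
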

	\begin{proof}
		This follows from the fact that the sheaf $W_m\Omega^r_{X|D,\log}$ is the image of $\mathcal{K}^M_{r, X|D}$ under the map $d\log[-]$.
	\end{proof}
	
	The isomorphism (\ref{dlogiso}) also has the following relative version.
	\begin{theorem}\label{relative.blochkato}
		The $d\log$ map induces an isomorphism of \'etale sheaves
		\begin{align}\label{relativedlog}
		d\log[-]: \mathcal{K}^M_{r,X|D}/(p^m\mathcal{K}^M_{r,X}\cap \mathcal{K}^M_{r,X|D} ) &\xrightarrow{\cong} W_m\Omega^r_{X|D,\log}\\
		\{x_1,\dots,x_r\} &\mapsto d\log [x_1]_m \wedge \cdots \wedge d\log [x_r]_m. \notag
		\end{align}
	\end{theorem}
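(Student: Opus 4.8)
The plan is to obtain the relative $d\log$ map as the restriction of the absolute isomorphism \eqref{dlogiso} to the subsheaf $\mathcal{K}^M_{r,X|D}\subset \mathcal{K}^M_{r,X}$ (the inclusion into $\mathcal{K}^M_{r,X}$ is part of the conclusion of Proposition \ref{milnor.transition}), and then to read off the kernel from what is already established; surjectivity is essentially free. Concretely, restricting the sheaf map of \eqref{dlogiso} to $\mathcal{K}^M_{r,X|D}$ produces a map of \'etale sheaves $d\log[-]\colon \mathcal{K}^M_{r,X|D}\to W_m\Omega^r_{X,\log}$, and the whole content of the theorem is the computation of its image and kernel.

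For the image, I would use that its image is exactly $W_m\Omega^r_{X|D,\log}$. This is the identification of the target as the image of $\mathcal{K}^M_{r,X|D}$ under $d\log[-]$ that was already invoked in the proof of Corollary \ref{transition.injective}, and it matches Definition \ref{relative.definition}: a generating symbol $\{x,x_1,\dots,x_{r-1}\}$ of $\mathcal{K}^M_{r,X|D}$, with $x\in\Ker(\cO_X^{\times}\to\cO_D^{\times})$, maps to the wedge $d\log[x]_m\wedge d\log[x_1]_m\wedge\cdots\wedge d\log[x_{r-1}]_m$, which is precisely a prescribed generator of $W_m\Omega^r_{X|D,\log}$. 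Hence the restricted map is surjective onto $W_m\Omega^r_{X|D,\log}$.

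For the kernel I would invoke the standard fact that, in the abelian category of \'etale sheaves, the kernel of the restriction of a map to a subsheaf is the intersection of that subsheaf with the kernel of the original map, i.e. $\Ker(\phi|_{\mathcal F})=\mathcal F\cap\Ker(\phi)$. Since \eqref{dlogiso} identifies $W_m\Omega^r_{X,\log}$ with $\mathcal{K}^M_{r,X}/p^m$, the kernel of $d\log[-]$ on $\mathcal{K}^M_{r,X}$ is $p^m\mathcal{K}^M_{r,X}$. Therefore the kernel of the restricted map is $\mathcal{K}^M_{r,X|D}\cap p^m\mathcal{K}^M_{r,X}$, and combining this with the surjectivity above yields the induced isomorphism \eqref{relativedlog}.

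The only step requiring genuine care is the image identification. One must verify that $j_*\mathcal{K}^M_{r-1,U}$ is \'etale-locally generated by symbols of sections of $j_*\cO_U^{\times}$, so that the generators listed in Definition \ref{relative.definition} all arise from honest symbols $\{x,x_1,\dots,x_{r-1}\}$ in $\mathcal{K}^M_{r,X|D}$, and conversely that every such symbol gives one of those generators; this is the only place where the concrete structure of the relative Milnor $K$-sheaf enters. Everything downstream is the formal kernel--image computation just described, so I expect the bookkeeping with \'etale-local generators to be the main (and only) obstacle.
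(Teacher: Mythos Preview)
Your proposal is correct and follows essentially the same approach as the paper. The paper compresses your kernel/image computation into a single commutative square
\[
\xymatrix{
\mathcal{K}^M_{r,X|D}/(p^m\mathcal{K}^M_{r,X}\cap \mathcal{K}^M_{r,X|D}) \ar@{^(->}[r] \ar@{->>}[d]_{d\log} & \mathcal{K}^M_{r,X}/p^m \ar[d]^{\cong}_{d\log} \\
W_m\Omega^r_{X|D,\log} \ar@{^(->}[r] & W_m\Omega^r_{X,\log}
}
\]
and reads off the isomorphism from the injectivity of the top row together with \eqref{dlogiso}; the surjectivity of the left arrow is exactly the fact (already used in Corollary~\ref{transition.injective}) that $W_m\Omega^r_{X|D,\log}$ is by construction the image of $\mathcal{K}^M_{r,X|D}$ under $d\log$, and your worry about \'etale-local generation by symbols is not addressed separately in the paper.
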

	\begin{proof}
		The assertion follows directly by the following commutative diagram 
		\[ \xymatrix{ 
			\mathcal{K}^M_{r,X|D}/(p^m\mathcal{K}^M_{r,X}\cap \mathcal{K}^M_{r,X|D} ) \ar@{^(->}[r] \ar@{->>}[d]_{d\log}  &\mathcal{K}^M_{r,X}/p^m \ar[d]^{\cong}_{d\log}\\
			\DRWlog{m}{r}{X|D} \ar@{^(->}[r] & 	\DRWlog{m}{r}{X}.
		}\]
	\end{proof}

	In the rest of this section, we will prove two fundamental results for the relative logarithmic de Rham-Witt sheaves. 
	
	\begin{theorem}\label{Kummerlogform}
		Write
		$D=\sum_{\lambda \in \Lambda}n_{\lambda}D_{\lambda}$,
		where $D_{\lambda}$ ($\lambda\in \Lambda$) are irreducible components of $D$.
		Then we have an exact sequence
		\[
		0\to \WXDplog {m-1} r \xrightarrow{\ul{p}} \WXDlog m r \to\WXDlog 1 r \to 0,
		\]
		where $[D/p]=\sum_{\lambda \in \Lambda}[n_{\lambda}/p]D_{\lambda}$ with
		$[n/p]=\min\{n'\in \Z|\;pn'\geq n\}.$
	\end{theorem}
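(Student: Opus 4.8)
The plan is to deduce the sequence from its well-known absolute analogue
\[ 0 \to \WXlog{m-1}{r} \xrightarrow{\ul p} \WXlog{m}{r} \xrightarrow{R^{m-1}} \WXlog{1}{r} \to 0, \]
which under the isomorphism \eqref{dlogiso} is just the exact sequence $0 \to \mathcal{K}^M_{r,X}/p^{m-1} \xrightarrow{\times p} \mathcal{K}^M_{r,X}/p^m \to \mathcal{K}^M_{r,X}/p \to 0$, with $\ul p$ corresponding to multiplication by $p$ and $R^{m-1}$ to the restriction. Using the inclusions of Corollary \ref{transition.injective} (applied to $0 \le [D/p] \le D$) I would set up the commutative ladder whose bottom row is this absolute sequence, whose top row is the asserted one, and whose vertical maps are the natural inclusions of the relative subsheaves. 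Once the top row is seen to be a well-defined complex, injectivity of $\ul p$ and the identity $R^{m-1}\circ \ul p = 0$ on top are inherited from the bottom row together with injectivity of the left vertical map.

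First I would record the formal points. Because $[x]_m$ is multiplicative, $\ul p(d\log[x_1]_{m-1}\wedge\cdots) = d\log[x_1^p]_m\wedge d\log[x_2]_m\wedge\cdots$, so on symbols $\ul p$ raises the first entry to its $p$-th power. If $x_1 \in \Ker(\cO_X^\times \to \cO_{[D/p]}^\times)$, i.e.\ locally $x_1 \equiv 1$ modulo the ideal of $[D/p]$, then since $\cO_X$ has characteristic $p$ one gets $x_1^p \equiv 1$ modulo the ideal of $p\cdot[D/p]$, and $p\cdot[D/p]\ge D$ by the very definition $[n/p]=\min\{n'\mid pn'\ge n\}$; hence $x_1^p \in \Ker(\cO_X^\times \to \cO_D^\times)$ and $\ul p$ indeed carries $\WXDplog{m-1}{r}$ into $\WXDlog{m}{r}$. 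Surjectivity of $R^{m-1}$ on the top right is immediate: a local generator $d\log x_1\wedge\cdots\wedge d\log x_r$ of $\WXDlog{1}{r}$ lifts to the generator $d\log[x_1]_m\wedge\cdots\wedge d\log[x_r]_m$ of $\WXDlog{m}{r}$, and $R^{m-1}$ sends the latter back to the former since $R^{m-1}[x_\nu]_m=[x_\nu]_1=x_\nu$.

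The essential content is exactness in the middle, i.e.\ $\Ker(R^{m-1})\subseteq \Image(\ul p)$ in the top row. Given a local section $\omega\in\WXDlog{m}{r}$ with $R^{m-1}\omega=0$, exactness of the bottom row produces a unique $\eta\in\WXlog{m-1}{r}$ with $\ul p\,\eta=\omega$, and the whole problem becomes showing that $\eta$ already lies in the relative subsheaf $\WXDplog{m-1}{r}$. Translating through \eqref{dlogiso} and Theorem \ref{relative.blochkato}, and absorbing a $p^{m-1}$-divisible correction, this reduces to the sheaf-theoretic identity
\[ \{\,a\in\mathcal{K}^M_{r,X}\mid p\cdot a\in\mathcal{K}^M_{r,X|D}\,\}=\mathcal{K}^M_{r,X|[D/p]}, \]
the inclusion $\supseteq$ being the computation of the previous paragraph. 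For the reverse inclusion I would localize at the generic points of the components $D_\lambda$, where $\cO_X$ becomes a (henselian) discrete valuation ring with uniformizer $t$ and $D=n_\lambda(t)$, and invoke the filtration of the relative Milnor $K$-sheaf by the subgroups generated by symbols $\{1+t^j a, x_2,\dots,x_r\}$ from \cite{ruellingsaito}. The key local fact is the characteristic-$p$ identity $(u-1)^p=u^p-1$: it shows that $u^p\equiv 1 \ (\mathrm{mod}\ t^{n_\lambda})$ forces $u\equiv 1\ (\mathrm{mod}\ t^{[n_\lambda/p]})$, and more generally that multiplication by $p$ shifts the ramification filtration by exactly the factor recorded in $[D/p]$, so that $p\cdot a\in\mathcal{K}^M_{r,X|D}$ pins $a$ to filtration level $[D/p]$. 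This local analysis --- verifying that membership in the relative sheaf is detected at the codimension-one points of $\Supp D$ and that the Frobenius twist matches $[D/p]$ optimally --- is the main obstacle, and is where the structure theory of relative Milnor $K$-sheaves from \cite{ruellingsaito} does the real work.
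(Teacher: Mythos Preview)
Your reduction to Milnor $K$-theory is exactly what the paper does: via Theorem~\ref{relative.blochkato} the statement becomes the exactness of
\[
0 \to U^{[\underline n/p]}k^M_r(R)_{m-1} \xrightarrow{p} U^{\underline n}k^M_r(R)_m \to U^{\underline n}k^M_r(R)_1 \to 0
\]
at the henselization $R$ of a local ring of $X$, and this is the content of Theorem~\ref{Th.nuKummer}. Your formal verifications (that $\underline p$ lands in the right subsheaf, surjectivity of $R^{m-1}$, the $p^{m-1}$-divisible correction) are fine.

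Where your sketch diverges from the paper, and where there is a genuine gap, is in how you propose to prove the $K$-theoretic lemma. You want to localize at the generic points of the $D_\lambda$, reduce to a DVR, and invoke Bloch--Kato there. For this to give the global statement you need the assertion you flag as ``membership in the relative sheaf is detected at the codimension-one points of $\Supp D$'', i.e.\ that $K^M_{r,X}/\mathcal K^M_{r,X|[D/p]}$ injects into the product over the generic points $\eta_\lambda$ of the corresponding DVR quotients. This is not provided by \cite{ruellingsaito}, and it is \emph{not} what the paper does. The paper works directly at an arbitrary (possibly higher-codimension) point, with the full multi-index filtration $U^{\underline n}K^M_r(R)$ for $\underline n\in\bN^d$: Proposition~\ref{grKM} computes the graded pieces $\mathrm{gr}^{\underline n,i}k^M_r(R)_m$ in terms of the twisted differential modules $\omega^{r-1}_{\underline n,i}$, and Claims~\ref{claim1}--\ref{claim2} show that multiplication by $p$ is injective on these graded pieces via the relative inverse Cartier isomorphism of Lemma~\ref{lem-wniCartier}. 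The intersection points of the components are handled by this multi-index bookkeeping, not by a reduction to codimension one.

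In short: your detection-at-codim-1 step is unjustified and is precisely the difficulty that the paper's multi-index filtration argument is designed to circumvent. Either supply an injectivity statement for $K^M_{r,X}/\mathcal K^M_{r,X|D}$ into codimension-one localizations (which for $r>1$ is not elementary), or follow the paper and compute $\mathrm{gr}^{\underline n,i}$ directly.
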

	
	\begin{proof}
		The claim follows from Theorem \ref{Th.nuKummer} below by the isomorphism (\ref{relativedlog}).
	\end{proof}
	
	Let $R$ be the henselization of a local ring of a smooth scheme over a field $k$ of characteristic $p>0$.
	Let $(T_1,\dots,T_d)\subset R$ be a part of a system of regular parameters and put $T=T_1\cdots T_d$.
	We endow $\bN^d$ with a semi-order by
	\[
	(n_1,\dots,n_d)\leq (n'_1,\dots,n'_d)\; \text{if}\; n_i\leq n'_i \;\;\text{for} \;\; \forall i
	\]
	and put
	\[
	\ul{1}=(1,\dots,1).
	\]
	Following \cite[\S4]{blochkato},
	we define $\FKMRr \eb\subset K^M_r(R)$ for $\eb=(n_1,\dots,n_d)\in \bN^d$ as the subgroup generated by symbols
	\[
	\{x_1,\dots,x_r\}\qwith x_1\in 1+T_1^{n_1}\cdots T_d^{n_d}R, \; x_i\in R[1/T]^\times\; (2\leq i\leq d).
	\]
	(Here having the injectivity of $K^M_r(R)\to K^M_r(K)$ with the quotient field $K$ of $R$, the above symbols are considered in $K^M_r(K)$.)
	For an integer $m>0$, put
	\[
	\FkMRr \eb m =\Image(\FKMRr \eb \to \KMRr/p^m).
	\]
	
	\begin{theorem}\label{Th.nuKummer}
		We have the following exact sequence:
		\[
		0\to \FkMRr \ebp {m-1} \xrightarrow{p} \FkMRr \eb m \to \FkMRr \eb 1 \to 0,
		\]
		where $\ebp=\min\{\ul{\nu}\in \bN^d|\; p\ul{\nu}\geq \eb\}\in \bN^d$.
	\end{theorem}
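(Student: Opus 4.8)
The plan is to reduce the whole statement to two structural facts about $\KMRr$ and then deduce the exactness by elementary group theory. Write $G=\KMRr$, $F=\FKMRr\eb$, $F'=\FKMRr\ebp$, and set $T^{\eb}=T_1^{n_1}\cdots T_d^{n_d}$, $T^{\ebp}=T_1^{[n_1/p]}\cdots T_d^{[n_d/p]}$, so that $\FkMRr \eb m=(F+p^mG)/p^mG$, $\FkMRr \eb 1=(F+pG)/pG$ and $\FkMRr \ebp{m-1}=(F'+p^{m-1}G)/p^{m-1}G$. The first thing I would record is the key characteristic-$p$ identity: for $a\in R$ one has $(1+T^{\ebp}a)^p=1+T_1^{p[n_1/p]}\cdots T_d^{p[n_d/p]}a^p$, and since $p[n_i/p]\ge n_i$ this lies in $1+T^{\eb}R$. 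Hence $p\cdot\{1+T^{\ebp}a,x_2,\dots,x_r\}=\{(1+T^{\ebp}a)^p,x_2,\dots,x_r\}\in\FKMRr{p\ebp}\subseteq F$, giving $pF'\subseteq F$. This shows at once that the map $\xrightarrow{p}$ is well defined and that $\mathrm{Image}(p)\subseteq\Ker(\FkMRr \eb m\to\FkMRr \eb 1)$; surjectivity of the reduction map is immediate, as both groups are quotients of $F$. By the modular law $(F+p^mG)\cap pG=(F\cap pG)+p^mG$, so middle exactness is equivalent to the inclusion $F\cap pG\subseteq pF'+p^mG$, and injectivity of $\xrightarrow{p}$ is equivalent to the implication $y\in F',\ py\in p^mG\Rightarrow y\in p^{m-1}G$.

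Everything therefore comes down to: (Fact 1) $G=\KMRr$ is $p$-torsion free, and (Fact 2) $p\KMRr\cap\FKMRr\eb=p\FKMRr\ebp$. For Fact 1 I would use the injectivity of $\KMRr\to K^M_r(K)$ (granted in the text) together with the theorem of Izhboldin that the Milnor $K$-groups of a field of characteristic $p$ have no $p$-torsion; thus multiplication by $p$ is injective on $G$. Injectivity of $\xrightarrow{p}$ follows: from $py=p^mw$ we cancel one $p$ to get $y=p^{m-1}w\in p^{m-1}G$, hence $y\in F'\cap p^{m-1}G$. Fact 2 settles the hard inclusion for middle exactness, since $F\cap pG=pF'\subseteq pF'+p^mG$ (the reverse inclusion $pF'\subseteq F\cap pG$ being the content of the previous paragraph).

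The remaining and genuinely hard point is the inclusion $p\KMRr\cap\FKMRr\eb\subseteq p\FKMRr\ebp$ in Fact 2; equivalently, by Fact 1, that $pw\in\FKMRr\eb$ forces $w\in\FKMRr\ebp$. The model case is $r=1$, where $\KMRr=R^\times$ and $pw$ is the $p$-th power $w^p$: from $(w-1)^p=w^p-1$ one reads off $w^p\in 1+T^{\eb}R\iff w-1\in T^{\ebp}R\iff w\in\FKMRr\ebp$, because $R$ is a domain and the inequalities $p[n_i/p]\ge n_i$ are sharp. For general $r$ I would pass to the associated graded of the filtration $\{\FKMRr{\ul j}\}_{\ul j\in\bN^d}$ and argue that, if $w$ had nonzero symbol in some $\gr^{\ul j}$ with $\ul j\not\geq\ebp$, then $pw$ would have nonzero symbol in degree $p\ul j$, which satisfies $p\ul j\not\geq\eb$ by the very definition of $\ebp$ as the minimal $\ul\nu$ with $p\ul\nu\geq\eb$; this would contradict $pw\in\FKMRr\eb$, provided multiplication by $p$ is injective on the graded pieces. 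That injectivity is exactly the statement that the $p$-th power map induces the inverse Cartier operator (equivalently the Frobenius on differential forms), which is injective, and this is where I would invoke the Bloch--Kato computation of $\gr^{\ul j}\KMRr$ in terms of differential forms $\Omega^{r-1}\oplus\Omega^{r-2}$ (\cite[\S4]{blochkato}).

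I expect the main obstacle to be precisely this last step. The filtration is indexed by the poset $\bN^d$ rather than by a single integer, so ``leading term'' arguments must be organized carefully: one must either refine the semi-order to a total order compatible with multiplication by $p$ (which multiplies degrees by $p$), or run a coordinatewise d\'evissage, and then transport the Bloch--Kato description of the graded pieces and the injectivity of the inverse Cartier operator to this multi-index setting. Should the graded argument only yield the inclusion modulo $p^m$, i.e.\ $p\KMRr\cap\FKMRr\eb\subseteq p\FKMRr\ebp+p^m\KMRr$, this already suffices for the middle exactness as reduced above. Once the graded statement is in place, Facts 1 and 2 assemble the short exact sequence formally.
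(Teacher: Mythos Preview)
Your approach is essentially the same as the paper's, and correctly identifies the two structural inputs: $p$-torsion freeness of $\KMRr$ for left exactness, and a d\'evissage on the filtration $\{\FKMRr{\ul j}\}$ governed by the inverse Cartier isomorphism for middle exactness. The paper phrases the middle step dually, as injectivity of
\[
\KMRr/\big(\FKMRr{\ebp}+p^{m-1}\KMRr\big)\ \rmapo{p}\ \KMRr/\big(\FKMRr{\eb}+p^{m}\KMRr\big),
\]
and proves it by showing injectivity of $p$ on each graded piece $\grikMRr{\ebp}{m-1}\to\grikMRr{\eb}{m}$ via the identifications of Proposition~\ref{grKM} and the Cartier isomorphism of Lemma~\ref{lem-wniCartier}. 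Your Fact~2, $p\KMRr\cap\FKMRr{\eb}=p\FKMRr{\ebp}$, is the integral (``$m=\infty$'') form of this same statement; it is slightly stronger but is proved by the identical mechanism, and (as you note) even the weaker version modulo $p^m$ suffices.

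There is one step you do not address that the paper treats separately. Your leading-term argument presupposes $w\in\FKMRr{\ul j}$ for some $\ul j\geq\ul 1$, but the filtration $\{\FKMRr{\ul j}\}_{\ul j\geq\ul 1}$ does not exhaust $\KMRr$; one must first show that $pw\in\FKMRr{\eb}$ with $\eb\geq\ul 1$ forces $w\in\FKMRr{\ul 1}$. The paper handles this ``top of the filtration'' step (its Claim~\ref{claim1}) by the injective specialization map
\[
\KMRr/\FKMRr{\ul 1}\ \hookrightarrow\ \bigoplus_{1\leq i\leq d} K^M_r(R_i),\qquad R_i=R/(T_i),
\]
together with $p$-torsion freeness of each $K^M_r(R_i)$. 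Once you insert this base case, your argument and the paper's coincide.
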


	For the proof we compute
	\[
	\grikMRr \eb m = \FkMRr \eb m/\FkMRr {\eb+\delta_i} m\qwith
	\delta_i=(0,\dots,\overset{\underset{\vee}{i}}{1},\dots,0).
	\]
	We need some preliminaries. For $\eb\in \bN^d$ and $1\leq i\leq d$ and an integer $q\geq 1$ put
	\[
	\wni q  = I^\eb\Omega^q_R(\log T)\otimes_R R_i \qwith R_i=R/(T_i),
	\]
	where $I^\eb=(T_1^{n_1}\cdots T_d^{n_d})\subset R$ and
	$\Omega^q_R(\log T)$ is the sheaf of (absolute) differential $q$-forms of $R$ with logarithmic poles along $T=0$.
	It is easy to check the exterior derivative induces
	\[
	d^q:\wni q \to \wni {q+1} .
	\]
	Put
	\[
	\Zni q =\Ker(\wni q \rmapo{d^q} \wni {q+1}),\quad
	\Bni q =\Image(\wni  {q-1} \rmapo{d^{q-1}} \wni {q}).
	\]
	We can easily check the following.
	
	\begin{lemma}(\cite[Thm. 2.16]{ruellingsaito})\label{lem-wniCartier}
		Let the notation be as above. Then the inverse Cartier morphism
		\[
		C^{-1}: \Omega^q_R \to \Omega^q_R/d\Omega^{q-1}_R
		\]
		induces an isomorphism
		\[
		\Cni:\wnpi q \isom  \Zni q/\Bni q.
		\]
	\end{lemma}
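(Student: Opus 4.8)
My plan is to read the statement as a twisted, logarithmic, and partially restricted version of the classical inverse Cartier isomorphism, and to reduce it to that classical case by an explicit computation in the regular parameters. First I would make the twist explicit: multiplication by the generator $T^{\eb}=T_1^{n_1}\cdots T_d^{n_d}$ identifies $I^{\eb}\Omega^q_R(\log T)$ with $\Omega^q_R(\log T)$ and carries the exterior derivative $d$ into the twisted differential $d_\eta=d+\eta\wedge(-)$, where $\eta=\sum_{j=1}^d n_j\,d\log T_j$. Moreover, since $dT_i=T_i\,d\log T_i$, the operator $d$ preserves $T_i\cdot I^{\eb}\Omega^q_R(\log T)$ and hence descends to the quotient $\otimes_R R_i$, as already noted before the statement. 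Thus $\Zni{q}/\Bni{q}$ is the $q$-th cohomology of the complex $(\wni{\bullet},d)$, which under the above identification is the reduction modulo $T_i$ of the $\eta$-twisted logarithmic de Rham complex, and the content of the lemma is that $C^{-1}$ computes this cohomology.

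Next I would pin down the effect of $C^{-1}$ on the twist. Since $k$ is perfect (or, in general, after the relative Frobenius twist), $R$ is free over $R^p$ on the monomials in a regular system of parameters extending $T_1,\dots,T_d$, and on this basis $C^{-1}$ raises coefficients to the $p$-th power, fixes each logarithmic factor $d\log T_j$, and sends a regular $dg$ to $g^{p-1}\,dg$. Hence $C^{-1}$ multiplies the coefficient of a form in $I^{\ebp}\Omega^q_R(\log T)$ essentially by its $p$-th power, landing it in $I^{p\ebp}\Omega^q_R(\log T)\subseteq I^{\eb}\Omega^q_R(\log T)$ precisely because $p\,\ebp\ge\eb$ componentwise. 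This is exactly why the source carries the index $\ebp$: it is the minimal twist whose $p$-th power clears the modulus $\eb$, and this minimality is what should make the induced map $\Cni$ not merely well defined but bijective.

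The proof would then reduce, by organizing the monomial computation one coordinate direction at a time (a K\"unneth-type splitting of the logarithmic de Rham complex via the $R^p$-basis), to a one-variable analysis in each direction. Well-definedness is the routine part: closedness of $C^{-1}(\alpha)$ is automatic, since $C^{-1}$ lands in the closed forms of $R$ and the representative lies in $I^{\eb}\Omega^q_R(\log T)$, while independence of the chosen representative modulo $\Bni{q}$ and descent modulo $T_i$ follow from the twist computation of the first paragraph. For bijectivity, in the directions $j\neq i$ and in the non-logarithmic directions one is doing the standard bookkeeping behind Kato's logarithmic Cartier isomorphism, matching the surviving $p$-th-power cocycles bijectively with the $\ebp$-twisted forms while discarding the exact part in $\Bni{q}$.

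The step I expect to be the main obstacle is the interaction of the reduction modulo $T_i$ with the twist in the distinguished $i$-th direction. After passing to $R/(T_i)$ the complex splits off the $d\log T_i$-direction: the only way $d$ can produce a $d\log T_i$-term is with coefficient $T_i\,\partial_i(-)$, which dies modulo $T_i$, so the $i$-th direction contributes a two-term Koszul piece whose connecting map is multiplication by $n_i$ coming from the twist $\eta$. Thus the $i$-th factor is governed by the residue class of $n_i$ modulo $p$, and the exponent matching $p\,[n_i/p]$ versus $n_i$ behaves differently according to whether $p\mid n_i$; these two cases must be handled separately. Controlling this boundary case---verifying that the ceiling $[n_i/p]$ is forced in the $i$-th slot just as the $p$-th-power scaling forces it in the others, and that $d_\eta$-exactness after reduction modulo $T_i$ matches the untwisted situation---is the delicate heart of the argument, and once the $i$-th factor is understood the remaining directions reduce verbatim to the classical logarithmic Cartier isomorphism.
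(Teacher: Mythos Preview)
The paper does not actually prove this lemma: it is stated with the attribution \cite[Thm.~2.16]{ruellingsaito} and preceded only by the remark ``We can easily check the following.'' So there is no argument in the paper to compare your proposal against; the authors are content to cite R\"ulling--Saito and treat the verification as routine.

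Your approach is the natural one and is correct in outline. The two structural observations you make are exactly the right ones: (i) untwisting by $T^{\eb}$ converts $d$ on $I^{\eb}\Omega^q_R(\log T)$ into the twisted differential $d+\eta\wedge(-)$ on $\Omega^q_R(\log T)$ with $\eta=\sum n_j\,d\log T_j$, and this passes to the quotient by $T_i$ because $dT_i=T_i\,d\log T_i$; (ii) the inverse Cartier operator respects the filtration by the ideals $I^{\bullet}$ with the $p$-fold shift, which is precisely why the source carries the index $\ebp=\min\{\ul{\nu}:p\ul{\nu}\ge\eb\}$. Your identification of the reduction modulo $T_i$ in the $i$-th direction as a two-term Koszul piece with boundary map multiplication by $n_i\pmod p$ is also correct, and the resulting case split according to $p\mid n_i$ or $p\nmid n_i$ is the same dichotomy that reappears in the paper's Proposition~\ref{grKM}. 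One small caution: when you carry out the K\"unneth-type splitting, remember that the $T_j$ for $j\le d$ are only \emph{part} of a regular system of parameters of $R$; the remaining (non-logarithmic) directions must be handled by the ordinary Cartier isomorphism, not the logarithmic one. With that in mind, your sketch would unwind into the argument of \cite[Thm.~2.16]{ruellingsaito}, which is presumably why the authors regard the check as straightforward.
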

	
	We define subgroups
	\[
	\Bni q =\BBni q 1 \subset \BBni q 2\subset \cdots\subset
	\ZZni q 2\subset \ZZni q 1=\Zni q\subset \wni q,
	\]
	by the inductive formula
	\[
	\BBnpi q s \rmapou{\Cni}{\simeq} \BBni q {s+1}/\Bni q,\quad
	\ZZnpi q s \rmapou{\Cni}{\simeq} \ZZni q {s+1}/\Bni q.
	\]
	\medbreak
	
	\begin{proposition}\label{grKM}
		Fix $\eb=(n_1,\dots,n_d)\in \bN^d$ and $1\leq i\leq d$.
		\begin{itemize}
			\item[(1)]
			There is a natural map
			\[
			\rho_{\eb,i}: \wni {r-1} \to \grikMRr \eb m
			\]
			such that for $a\in R$, $b_2,\dots,b_d \in R[1/T]^\times$,
			\[
			\rho_{\eb,i}\big(a(T_1^{n_1}\cdots T_d^{n_d}) \dlog{b_2}\wedge\cdots\wedge \dlog{b_r}\big)=
			\{1+aT_1^{n_1}\cdots T_d^{n_d},b_2,\dots,b_r\}\;\in \; \FKMRr \eb.
			\]
			\item[(2)]
			Write $n_i=p^s\cdot n'$ with $p\not|n'$. If $m>s$, $\rho_{\eb,i}$ induces an isomorphism
			\[
			\wni {r-1}/\BBni {r-1} s \isom \grikMRr \eb m
			\]
			If $m\leq s$, $\rho_{\eb,i}$ induces an isomorphism
			\[
			\wni {r-1}/\ZZni {r-1} m \isom \grikMRr \eb m
			\]
		\end{itemize}
	\end{proposition}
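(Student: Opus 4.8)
The plan is to follow the symbol-calculus method of Bloch--Kato \cite[\S4]{blochkato}, adapting it to the multi-index filtration, with the Cartier structure supplied by Lemma \ref{lem-wniCartier}. For part (1), I would first define $\rho_{\eb,i}$ on the standard generators of $\wni{r-1}=I^\eb\Omega^{r-1}_R(\log T)\otimes_R R_i$. Since $\Omega^1_R(\log T)$ is generated by the forms $\dlog{b}$ with $b\in R[1/T]^\times$, every element of $\wni{r-1}$ is an $R_i$-linear combination of forms $aT^{\eb}\,\dlog{b_2}\wedge\cdots\wedge\dlog{b_r}$ (writing $T^{\eb}=T_1^{n_1}\cdots T_d^{n_d}$), and I set its image to be $\{1+aT^{\eb},b_2,\dots,b_r\}$ modulo $\FkMRr{\eb+\delta_i}{m}$. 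Well-definedness is then a sequence of symbol identities: additivity in $a$ reduces to the congruence $(1+aT^{\eb})(1+a'T^{\eb})\equiv 1+(a+a')T^{\eb}$ modulo $1+I^{2\eb}R$, whose correction term contributes a symbol in $\FkMRr{\eb+\delta_i}{m}$; the Leibniz and alternating relations among the $\dlog{b_\nu}$ match the multilinearity and, via the Steinberg relation (using that $\{-1,b\}$ is killed by $2$), the alternating property of symbols; and if $a\in T_iR$ then $1+aT^{\eb}\in 1+I^{\eb+\delta_i}R$, so the symbol already lies in $\FKMRr{\eb+\delta_i}$ and dies in the graded piece, which is exactly the relation imposed by $\otimes_R R_i$. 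The prime $p=2$ requires the usual minor modifications.

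Surjectivity of $\rho_{\eb,i}$ is immediate from the definitions: $\FkMRr{\eb}{m}$ is the image of $\FKMRr{\eb}$, and the latter is generated by the symbols $\{1+aT^{\eb},b_2,\dots,b_r\}$ with $b_\nu\in R[1/T]^\times$, each of which is $\rho_{\eb,i}$ applied to the corresponding form.

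The substance of part (2) is the determination of $\Ker(\rho_{\eb,i})$. I would first show that the asserted subgroup lies in the kernel. That $\Bni{r-1}\subseteq\Ker(\rho_{\eb,i})$ is the Steinberg relation: after reducing to unit coefficients via $a=(1+a)-1$ and writing $-aT^{\eb}$ multiplicatively, the identity $\{1+aT^{\eb},-aT^{\eb},b_3,\dots,b_r\}=0$ expands to $\rho_{\eb,i}\big(d(aT^{\eb})\wedge\dlog{b_3}\wedge\cdots\wedge\dlog{b_r}\big)=0$, so $\rho_{\eb,i}$ annihilates $d\,\wni{r-2}=\Bni{r-1}$. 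The higher terms come from the interaction of the inverse Cartier operator with the $p$-th power map on symbols: since $p\ebp\geq\eb$ one has $(1+aT^{\ebp})^p\equiv 1+a^pT^{p\ebp}$ with $T^{p\ebp}\in I^{\eb}$, whence $p\cdot\{1+aT^{\ebp},b_2,\dots,b_r\}=\{1+a^pT^{p\ebp},b_2,\dots,b_r\}$. Under $\rho$ this is precisely the relation matching multiplication by $p$ (which lowers the level $m\mapsto m-1$) with $\Cni$ (Frobenius on coefficients), and iterating it in tandem with the factorization $n_i=p^sn'$ feeds the inductive definitions $\BBni{r-1}{s+1}/\Bni{r-1}\cong\BBnpi{r-1}{s}$ and $\ZZni{r-1}{s+1}/\Bni{r-1}\cong\ZZnpi{r-1}{s}$. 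Running the iteration $s$ times shows $\BBni{r-1}{s}\subseteq\Ker(\rho_{\eb,i})$ when $m>s$; when $m\leq s$ the relation $p^m=0$ intervenes after $m$ steps and enlarges the kernel to $\ZZni{r-1}{m}$.

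For the reverse inclusion, that there are no further relations, I would construct a left inverse to $\rho_{\eb,i}$: send a symbol $\{1+aT^{\eb},b_2,\dots,b_r\}$ to the class of $aT^{\eb}\,\dlog{b_2}\wedge\cdots\wedge\dlog{b_r}$ in $\wni{r-1}/\BBni{r-1}{s}$ (respectively $\wni{r-1}/\ZZni{r-1}{m}$) and verify that it respects the Steinberg relation and the filtration, the two regimes being separated precisely by how many times the Cartier isomorphism of Lemma \ref{lem-wniCartier} can be iterated before $p^m$ vanishes. I expect this reverse direction to be the main obstacle: well-definedness of the inverse forces one to control all Steinberg relations simultaneously across the multi-index filtration, and the bookkeeping genuinely differs in the cases $m>s$ and $m\leq s$. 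I would organize it as a d\'evissage on $m$, taking $m=1$ as the base case --- where $\BBni{r-1}{0}=0$ if $p\nmid n_i$ and $\ZZni{r-1}{1}=\Zni{r-1}$ if $p\mid n_i$ --- which is the relative differential computation of R\"ulling--Saito \cite{ruellingsaito} underlying Lemma \ref{lem-wniCartier}, and then propagating to larger $m$ through the $\Cni$-isomorphisms together with the multiplication-by-$p$ relation established above.
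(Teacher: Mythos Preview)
Your treatment of part (1) and of surjectivity in part (2) is exactly what the paper does: it cites \cite[(4.5), (4.6)]{blochkato} for the well-definedness of $\rho_{\eb,i}$ and for the fact that it factors through the indicated quotients and surjects onto the graded pieces. The symbol identities you list (additivity modulo higher filtration, Steinberg giving $\Bni{r-1}$ in the kernel, the $p$-th power relation matching $\Cni$) are precisely the Bloch--Kato computations transported to the multi-index setting.

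Where you diverge from the paper is in the injectivity argument. You propose to build a left inverse $\grikMRr{\eb}{m}\to \wni{r-1}/\BBni{r-1}{s}$ (resp.\ $/\ZZni{r-1}{m}$) by hand and run a d\'evissage on $m$, and you correctly flag this as the main obstacle: to define a map out of Milnor $K$-theory one must verify the Steinberg relation on the target, and doing this uniformly across the multi-index filtration is genuinely delicate. The paper sidesteps this entirely. Its proof observes that $\wni{r-1}/\BBni{r-1}{s}$ and $\wni{r-1}/\ZZni{r-1}{m}$ are free modules over $R_i^{p^e}$ for $e\gg 0$; hence injectivity of a map out of them can be checked after localization at the generic point of $R_i$. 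This reduces the question to the case where $R$ is a discrete valuation ring, which is exactly \cite[(4.8)]{blochkato}. So rather than redoing the DVR computation in the multi-parameter situation, the paper uses freeness plus localization to import the DVR result wholesale. Your route is not wrong in principle, but it reproves \cite[(4.8)]{blochkato} in a harder setting; the paper's localization trick is the missing idea that makes the argument short.
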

	\begin{proof}
		The existence of $\rho_{\eb,i}$ together with the fact that it induces the surjective maps as in (2) is shown
		by the same argument as \cite[(4.5) and (4.6)]{blochkato}. Note that $\wni {r-1}/\BBni {r-1} s$ and $\wni {r-1}/\ZZni {r-1} m$ are free $R_i^{p^e}$-modules, for some $e>>0$.  By localization, the injectivity of the maps is reduced
		to the case $R$ is a discrete valuation ring, which has been treated in \cite[(4.8)]{blochkato}.
	\end{proof}
	\bigskip

	Now we prove Theorem \ref{Th.nuKummer}.
	It is easy to see that we have a complex as in the theorem. 
	Its exactness on the left follows from the fact that $K^M_r(R)$ is $p$-torsion free (cf. \cite[Thm. 8.1]{geisserlevine} and \cite[Thm. 6.1]{rost1996chow}).
	It remains to show the exactness in the middle. For this it suffices to show the injectivity of the map induced by multiplication by $p$:
	\[
	K^M_r(R)/\FKMRr {\ebp} +p^{m-1}K^M_r(R) \rmapo {p} K^M_r(R)/\FKMRr {\eb} +p^{m}K^M_r(R).
	\]
	
	This follows from the following claims.
	
	\begin{claim}\label{claim1}
		The multiplication by $p$ induces an injective map:
		\[
		K^M_r(R)/\FKMRr {\ul{1}}+p^{m-1}K^M_r(R)  \to K^M_r(R)/\FKMRr {\ul{1}} +p^{m}K^M_r(R).
		\]
	\end{claim}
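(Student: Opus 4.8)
The plan is to derive Claim~\ref{claim1} from the single fact that the quotient
\[
Q:=\KMRr/\FKMRr{\ul 1}
\]
has no $p$-torsion. Indeed, writing the source and target of the map in the claim as $Q/p^{m-1}Q$ and $Q/p^{m}Q$, the map is simply multiplication by $p$, and $Q/p^{m-1}Q\xrightarrow{p}Q/p^{m}Q$ is injective for every $m$ as soon as $Q$ is $p$-torsion-free: if $px\in p^{m}Q$, then cancelling $p$ (legitimate in a torsion-free group) gives $x\in p^{m-1}Q$. Thus the entire content of the claim is the $p$-torsion-freeness of $Q$.

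To prove the latter I would first identify $Q$. Since $\FKMRr{\ul 1}$ is generated by the symbols $\{1+Tf,b_2,\dots,b_r\}$ with $f\in R$ and $b_\nu\in R[1/T]^\times$, it is the relative Milnor $K$-group of the principal ideal $(T)=(T_1\cdots T_d)$, and $Q$ is to be computed by specialization and tame symbols along the components $\{T_i=0\}$ of the reduced divisor. The case $r=1$ already exhibits the mechanism: here $\FKMRr{\ul 1}=1+TR$ is exactly the kernel of the (surjective, as $R$ is local) map $R^\times\to (R/(T))^\times$, so $Q\cong (R/(T))^\times$, and this group is $p$-torsion-free because $R$ is regular local and $T=T_1\cdots T_d$ is squarefree, whence $R/(T)$ is reduced of characteristic $p$: a unit $u$ with $u^p=1$ satisfies $(u-1)^p=u^p-1=0$, hence $u=1$.

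For general $r$ I would refine this into an embedding of $Q$ into a finite direct sum of Milnor $K$-groups of the regular local rings $R/(T_i:i\in S)$ (indexed by $S\subseteq\{1,\dots,d\}$), constructed from the specialization map and the iterated residues along the components and their intersections. Each such Milnor $K$-group injects into the Milnor $K$-group of the corresponding function field, which is $p$-torsion-free by the results on Milnor $K$-theory of characteristic-$p$ fields already invoked for the exactness on the left (\cite[Thm.~8.1]{geisserlevine}, \cite[Thm.~6.1]{rost1996chow}). To verify that these residue and specialization maps have kernel exactly $\FKMRr{\ul 1}$ I would localize at the generic point of each component $\{T_i=0\}$, reducing to the discrete valuation ring $R_{(T_i)}$ where the relevant statements are those of Bloch--Kato \cite[\S4]{blochkato}; this is the same localization used in the proof of Proposition~\ref{grKM}.

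The step I expect to be the main obstacle is precisely this last identification of the kernel. In contrast to the one-parameter situation of Bloch--Kato, the filtration level $\FKMRr{\ul 1}$ is cut out by the \emph{product} $T=T_1\cdots T_d$ rather than by the individual $T_i$, so the residues along the different components interact and the combinatorial bookkeeping of the iterated specialization/residue maps is delicate; equivalently, one must control the Milnor $K$-theory of the reduced but reducible (and non-regular) ring $R/(T)$. Once the embedding into a $p$-torsion-free group is established, the $p$-torsion-freeness of $Q$, and hence Claim~\ref{claim1}, follows at once.
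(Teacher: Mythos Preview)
Your reduction is exactly the paper's: both argue that the map in the claim is multiplication by $p$ on $Q=\KMRr/\FKMRr{\ul 1}$, so it suffices to show $Q$ is $p$-torsion-free, and both do this by embedding $Q$ into a $p$-torsion-free target.

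Where you diverge is in the embedding itself. You propose a system of specialization \emph{and} iterated residue maps indexed by all nonempty subsets $S\subseteq\{1,\dots,d\}$, landing in $\bigoplus_S K^M_r\bigl(R/(T_i:i\in S)\bigr)$, and you correctly anticipate that identifying the kernel with $\FKMRr{\ul 1}$ is the delicate point. The paper avoids this combinatorics entirely: it uses only the $d$ plain specialization maps
\[
Q \;\longrightarrow\; \bigoplus_{i=1}^{d} K^M_r(R_i),\qquad
\{a_1,\dots,a_r\}\;\longmapsto\;\bigoplus_i\{a_1\bmod T_i,\dots,a_r\bmod T_i\},
\]
with $R_i=R/(T_i)$, and cites \cite[Prop.~2.10]{ruellingsaito} (together with Proposition~\ref{milnor.transition}) for the injectivity. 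No tame symbols, no higher-codimension strata. Since each $R_i$ is again the henselization of a smooth local ring, $K^M_r(R_i)$ is $p$-torsion-free by the same Geisser--Levine/Rost input already used for the left exactness, and the claim follows.

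Your localization idea---pass to the DVR $R_{(T_i)}$ and invoke Bloch--Kato---would show that an element killed by all the specializations lies in $U^1K^M_r(R_{(T_i)})$ for each $i$, but descending from this back to $\FKMRr{\ul 1}\subset K^M_r(R)$ is precisely the content of the R\"ulling--Saito result, so that citation is doing real work that your sketch does not supply.
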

	\begin{proof}
		We have a map (cf. \cite[the first displayed formular in the proof of Prop. 2.10]{ruellingsaito})
		\[K^M_r(R) /\FKMRr {\ul{1}}\to \underset{1\leq i\leq d}{\bigoplus}\; K^M_r(R_i); \{a_1,\cdots, a_r\}\mapsto \oplus_i\{{a}_1 \ \mathrm{mod}\ T_i,\cdots,a_r \ \mathrm{mod}\ T_i\}, \]
		where $(a\ \mathrm{mod}\ T_i) \in R_i$ is the image of $a\in R$. By Prop. 2.10 in loc.cit. and Proposition \ref{milnor.transition}, we see that this map is injective. Combining with the fact that  $\bigoplus_{1\leq i\leq d}K^M_r(R_i)$  is $p$-torsion free, we conclude this claim.
	\end{proof}
	
	\begin{claim}\label{claim2}
		For $\eb$ and $i$ as in Proposition \ref{grKM}, the multiplication by $p$ induces an injective map:
		\[
		\grikMRr \ebp {m-1}  \to \grikMRr \eb {m}  .
		\]
	\end{claim}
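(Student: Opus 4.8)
The plan is to transport both graded pieces to differential forms via Proposition \ref{grKM} and to recognize multiplication by $p$ as the inverse Cartier morphism $\Cni$. Write $n_i = p^s n'$ with $p \nmid n'$. Proposition \ref{grKM} identifies the target $\grikMRr\eb m$ with $\wni{r-1}/\BBni{r-1}s$ if $m > s$ and with $\wni{r-1}/\ZZni{r-1}m$ if $m \leq s$; applying the same proposition with $\eb$ replaced by $\ebp$ (whose $i$-th exponent then has $p$-adic valuation $s-1$ when $p\mid n_i$) identifies the source $\grikMRr\ebp{m-1}$ with $\wnpi{r-1}/\BBnpi{r-1}{s-1}$ if $m-1 > s-1$ and with $\wnpi{r-1}/\ZZnpi{r-1}{m-1}$ if $m-1 \leq s-1$. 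Since $m-1 > s-1 \Leftrightarrow m > s$, the two case distinctions match, and it remains to compute the map between these explicit quotients on the generating symbols $\rho_{\ebp,i}(a T^{\ebp}\dlog{b_2}\wedge\cdots\wedge\dlog{b_r}) = \{1+aT^{\ebp}, b_2,\dots,b_r\}$.

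Here the characteristic-$p$ hypothesis is decisive: as $p = 0$ in $R$, the binomial expansion collapses to $(1+aT^{\ebp})^p = 1 + a^p T^{p\ebp}$ with no intermediate terms, so that
\[
p\cdot\{1+aT^{\ebp}, b_2,\dots,b_r\} = \{1 + a^p T^{p\ebp}, b_2,\dots,b_r\} = \rho_{\eb,i}\big(a^p T^{p\ebp}\dlog{b_2}\wedge\cdots\wedge\dlog{b_r}\big).
\]
The differential form on the right is precisely $\Cni\big(a T^{\ebp}\dlog{b_2}\wedge\cdots\wedge\dlog{b_r}\big)$ by the defining formula of the inverse Cartier morphism. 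Thus, under the isomorphisms of Proposition \ref{grKM}, multiplication by $p$ is induced by $\Cni$; this is the relative, higher-dimensional analogue of \cite[(4.5),(4.6)]{blochkato}, and I would verify it by the same symbol manipulation on generators.

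Finally I would deduce injectivity from the properties of $\Cni$. By Lemma \ref{lem-wniCartier} the map $\Cni\colon \wnpi{r-1}\isom\Zni{r-1}/\Bni{r-1}$ is an isomorphism, and by the inductive definition of the filtration it carries $\BBnpi{r-1}{s-1}$ isomorphically onto $\BBni{r-1}s/\Bni{r-1}$ and $\ZZnpi{r-1}{m-1}$ isomorphically onto $\ZZni{r-1}m/\Bni{r-1}$. Passing to quotients therefore realizes the source as $\Zni{r-1}/\BBni{r-1}s$ (resp. $\Zni{r-1}/\ZZni{r-1}m$), which includes into $\wni{r-1}/\BBni{r-1}s$ (resp. $\wni{r-1}/\ZZni{r-1}m$) because $\BBni{r-1}s\subset\Zni{r-1}$ (resp. $\ZZni{r-1}m\subset\Zni{r-1}$); injectivity of the composite is then immediate. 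I expect the genuine work to lie not in the algebra of these inclusions but in the bookkeeping underlying the identification with $\Cni$: one must check that the Frobenius raises the level $\ebp$ to exactly $\eb$ in the $i$-th direction (this is automatic when $p\mid n_i$, while the case $p\nmid n_i$, where the naive Frobenius lands one graded level deeper, must be treated separately), so that $a^p T^{p\ebp}$ represents a class in $\grikMRr\eb m$ rather than in a deeper graded piece, and then match the two chains $\BBni{r-1}\bullet$ and $\ZZni{r-1}\bullet$ across $\Cni$ in each of the ranges $m > s$ and $m \leq s$.
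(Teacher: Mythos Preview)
Your approach is exactly the paper's: transport both graded pieces to differential forms via Proposition~\ref{grKM}, identify multiplication by $p$ with the inverse Cartier map $\Cni$, and read off injectivity from the inductive description of the $B$- and $Z$-filtrations. The paper records precisely your two commutative squares (one for $m>s$, one for $m\le s$) and nothing more; your added explanation of why $\Cni$ induces an injection on the quotients is correct.

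Your caveat about the case $p\nmid n_i$ is not a bookkeeping nuisance but a real defect in the \emph{statement} of the claim, and the paper shares it. When $p\nmid n_i$ one has $(p\ebp)_i=p\lceil n_i/p\rceil\ge n_i+1$, so $p\cdot U^{\ebp}\subset U^{p\ebp}\subset U^{\eb+\delta_i}$ and the induced map $\grikMRr{\ebp}{m-1}\to\grikMRr{\eb}{m}$ is zero; yet by Proposition~\ref{grKM} the source is isomorphic to a nonzero quotient of $\wnpi{r-1}$. Thus the claim is simply false in this case, and the commutative squares (which would involve $\BBnpi{r-1}{s-1}$ with $s-1=-1$) do not make sense. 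What rescues the argument for Theorem~\ref{Th.nuKummer} is that the d\'evissage from $\ul 1$ to $\eb$ only invokes the graded claim at those steps $\eb'\to\eb'+\delta_i$ for which $[\eb'/p]\ne[(\eb'+\delta_i)/p]$, and this forces $(\eb'+\delta_i)_i\equiv 1\pmod p$, i.e.\ one applies Claim~\ref{claim2} at a tuple whose $i$-th entry is divisible by $p$. So your proof (and the paper's) is complete once one adds the hypothesis $p\mid n_i$; there is no separate argument to be supplied for $p\nmid n_i$.
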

	\begin{proof}
		It is easy to check that the multiplication by $p$ induces such a map.
		Its injectivity follows from the commutative diagram
		\[
		\begin{CD}
		\wnpi {r-1}/\BBnpi {r-1} {s-1} @>{\Cni}>> \wni {r-1}/\BBni {r-1} {s}\\
		@VV{\simeq}V @VV{\simeq}V\\
		\grikMRr \ebp {m-1} @>>> \grikMRr \eb {m} \\
		\end{CD}
		\quad\text{ if } m>s,
		\]
		and the commutative diagram
		\[
		\begin{CD}
		\wnpi {r-1}/\ZZnpi {r-1} {m-1} @>{\Cni}>> \wni {r-1}/\ZZni {r-1} {m}\\
		@VV{\simeq}V @VV{\simeq}V\\
		\grikMRr \ebp {m-1} @>>> \grikMRr \eb {m} \\
		\end{CD}
		\quad\text{ if } m\leq s,
		\]
		where the vertical isomorphisms are from Proposition \ref{grKM}.
	\end{proof}

	\subsection{Relation with differential forms}
	
	The sheaf $\Omega_{X|D,\log}^r$  relates to coherent sheaves as follows.
	\begin{theorem} \label{logtocoherent}
		We have an exact sequence
		\[ 0 \to \Omega_{X|D,\log}^r \to \Omega_{X|D}^r \xrightarrow{1-C^{-1}} \Omega_{X|D}^r/d\Omega_{X|D}^{r-1} \to 0,\]
		where $\Omega_{X|D}^r=\Omega_{X/k}^r(\log D)\otimes_{\mathcal{O}_X}\mathcal{O}_X(-D)$.
	\end{theorem}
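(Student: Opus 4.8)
\medskip

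The plan is to verify exactness stalkwise on the \'etale site. Since all three terms are \'etale sheaves, it suffices to check the sequence on the strictly henselian local rings $R$ of $X$; fix such an $R$ with a regular system of parameters $T_1,\dots,T_d$ chosen so that $\Supp(D)$ is locally the zero locus of $T:=T_1\cdots T_c$, and write $\Omega^r_{X|D}=I^{\eb}\Omega^r_R(\log T)$ with $\eb=(n_1,\dots,n_d)$, $n_i\geq 1$ along $\Supp(D)$, and $I^{\eb}=(T_1^{n_1}\cdots T_d^{n_d})$, exactly the setting of \S1 preceding Lemma \ref{lem-wniCartier}. Two preliminary points must be recorded. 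First, by the Leibniz rule $d(T^{\eb})=T^{\eb}\bigl(\sum_i n_i\,d\log T_i\bigr)$ the graded module $\Omega^\bullet_{X|D}$ is a subcomplex of $\Omega^\bullet_R(\log T)$, so the quotient $\Omega^r_{X|D}/d\Omega^{r-1}_{X|D}$ makes sense. Second, since the inverse Cartier operator raises coefficients to their $p$-th powers, it carries $I^{\eb}$ into $I^{p\eb}\subseteq I^{\eb}$; hence $C^{-1}$ maps $\Omega^r_{X|D}$ into itself modulo exact forms and $1-C^{-1}$ is well defined.

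The inclusion $\Omega^r_{X|D,\log}\subseteq\Ker(1-C^{-1})$ is immediate. By Definition \ref{relative.definition} the source is generated by sections $d\log x_1\wedge\cdots\wedge d\log x_r$ with $x_1\in\Ker(\cO_X^\times\to\cO_D^\times)$, i.e.\ $x_1=1+f$ with $f\in I^{\eb}$; then $d\log x_1=df/(1+f)\in I^{\eb}\Omega^1_R(\log T)$, so the form lies in $\Omega^r_{X|D}$, and since $C^{-1}$ fixes every logarithmic form the operator $1-C^{-1}$ annihilates it.

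The substance is the surjectivity of $1-C^{-1}$ together with the reverse inclusion $\Ker(1-C^{-1})\subseteq\Omega^r_{X|D,\log}$. I would treat both through the $T_i$-adic filtration of $\Omega^r_{X|D}$, whose successive quotients are
\[
\Omega^r_{X|D}/I^{\eb+\delta_i}\Omega^r_R(\log T)\;\cong\;I^{\eb}\Omega^r_R(\log T)\otimes_R R/(T_i)\;=\;\wni r,
\]
the local sheaves introduced before Lemma \ref{lem-wniCartier}. Because $C^{-1}$ strictly raises the filtration level along $\Supp(D)$ ($\eb\mapsto p\eb$, using $n_i\geq1$), its effect on the associated graded is governed entirely by the relative Cartier isomorphism $\Cni\colon\wnpi r\isom\Zni r/\Bni r$ of Lemma \ref{lem-wniCartier}, which is precisely what distinguishes closed from exact forms at each level; in the directions transverse to $\Supp(D)$ one is reduced to the classical Cartier sequence (\cite{illusiederham}). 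The decisive observation is that a section $\omega$ lies in $\Ker(1-C^{-1})$ not when $C^{-1}\omega=\omega$ on the nose, but when $\omega-C^{-1}\omega$ is \emph{exact} in $\Omega^r_{X|D}$, so the computation is controlled by the quotients $\Zni r/\Bni r$ rather than by the forms themselves. Using the $p$-linearity of $\Cni$, the determination of kernel and cokernel reduces on each graded piece to Artin--Schreier equations of the shape $a-a^p=b$; these are solvable over the strictly henselian $R$, which yields surjectivity of $1-C^{-1}$ on stalks, hence as a map of \'etale sheaves, and forces the kernel to be spanned by logarithmic symbols.

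The main obstacle is the reverse inclusion: proving that the $C^{-1}$-fixed-modulo-exact part of the twisted sheaf $\Omega^r_{X|D}$ is generated by the \emph{relative} logarithmic symbols of Definition \ref{relative.definition}, whose first entry satisfies $x_1\equiv1\bmod I^{\eb}$, rather than merely by absolute logarithmic forms that happen to lie in $\Omega^r_{X|D}$. The delicacy is the interplay between the twist by $\cO_X(-D)$ (the factor $I^{\eb}$) and the logarithmic poles: a relative form such as $df/(1+f)$ has coefficients vanishing along $D$ yet is still $C^{-1}$-fixed, so the identification cannot be read off coefficientwise and genuinely requires the graded computation of Lemma \ref{lem-wniCartier}, together with careful bookkeeping ensuring that both the Artin--Schreier solutions and the lifted $d\log$ generators remain inside the relative subsheaf.
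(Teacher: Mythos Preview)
Your outline handles the easy inclusions and the surjectivity of $1-C^{-1}$ adequately (the paper does this more directly via the Artin--Schreier claim that if $a\in\pi A$ in a strictly henselian ring then one can choose $b\in\pi A$ with $b^p-b=a$, but your filtration argument would also go through). The gap is in the reverse inclusion $\Ker(1-C^{-1})\subseteq\Omega^r_{X|D,\log}$, which you correctly flag as the main obstacle but do not actually resolve.

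Your proposed argument analyzes $1-C^{-1}$ on the associated graded of the $I^{\eb}$-adic filtration, observing that $C^{-1}$ shifts $\eb\mapsto p\eb$ and so acts trivially on gradeds along $\Supp(D)$. But this only tells you that on each graded piece the kernel is controlled; it does \emph{not} allow you to lift back to the full module. Given $\omega$ with $(1-C^{-1})\omega$ exact, you would produce at each filtration level a relative logarithmic form matching $\omega$ modulo the next step, obtaining an infinite sequence of corrections. To conclude that $\omega$ itself is a sum of relative $d\log$-forms you need those corrections to \emph{converge}, and in a henselian (non-complete) local ring there is no reason they do. Your phrase ``careful bookkeeping ensuring that \dots the lifted $d\log$ generators remain inside the relative subsheaf'' elides precisely this point.

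The paper's proof is structured differently and addresses exactly this issue. It rephrases exactness in the middle as the equality $\Omega^r_{X|D}\cap\Omega^r_{X,\log}=\Omega^r_{X|D,\log}$ and proves the latter via Proposition~\ref{logform2.prop}, a refinement of Kato's result in \cite{katogalois}. Crucially, the proof first uses Artin approximation to reduce to the \emph{complete} ring $R=k[[T_1,\dots,T_d]]$, then filters by powers of a \emph{single} variable $T=T_d$, obtains explicit generators for $U_n/U_{n+1}$ depending on whether $p\mid n$, and finally forms \emph{convergent infinite products} $\prod_{n\geq n_d}(1+a_{s,n}T^n)$ in $1+I^{\eb}$ to write $\omega$ as a finite sum of relative $d\log$-forms. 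The passage to the completion is not cosmetic: it is what makes the infinite product converge and hence what closes the gap your sketch leaves open.
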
 
	\begin{proof}
		
		For the exactness on the right, it suffices to show the surjectivity of $1-C^{-1}$ on sections over the strict henselization of a local ring of $X$. In fact, by the argument in the classical case where $D=\emptyset$ (\cite[Lem. 1.3]{milnesurface}), it suffices to show the following claim.
		\begin{claim}\label{multiplicity}
			Let $A$ be a strictly henselian regular local ring of equi-characteristic $p>0$
			and $\fm\subset A$ be the maximal ideal. 
			Let $\pi\in \fm$ and $a\in A$. If $a\in \pi A$, then there exists $b\in A$, such that $b\in \pi A$ and $b^p-b=a$.   
		\end{claim}
		\begin{proof}[Proof of Claim \ref{multiplicity}]\renewcommand{\qedsymbol}{}
			Let $k$ be the residue field of $A$. 
			Since $\phi: A\to A$ is surjective, there exists $\tilde{b}\in A$ such that $\tilde{b}^p-\tilde{b}=a$. Letting $\beta\in k$ be the image of $\tilde{b}$, 
			$\beta^p-\beta=0\in k$ by the assumption $a\in \pi A\subset\fm$.
			Hence $\beta\in \Fp\subset A$ and we put $b=\tilde{b}-\beta\in A$.
			Then 
			\[b(b^{p-1}-1)=b^p-b=\tilde{b}^p-\tilde{b}=a\in \pi A.\]
			Since $b\in \fm_A$ by the construction, $b^{p-1}-1\in A^\times$
			and we get $b\in \pi A$.
		\end{proof}
		It remains to show the exactness in the middle, i.e., to show that  $\Omega_{X|D}^r\cap \Omega_{X,\log}^r=\Omega_{X|D,\log}^r$. This is a \'etale local question, which is a consequence of Proposition \ref{logform2.prop} below, which is a refinement of \cite[Prop.1]{katogalois}.
	\end{proof}

	Let $R$ be the henselization of a local ring of $X$ and choose a system $T_1,\dots,T_d$ of regular parameters of $R$ such that
	$\Supp(D)=\Spec(R/(T_1\cdots T_e))\subset \Spec(R)$ for some $e\leq d=\text{dim}(R)$. 
	Let $\DR 1 R(\log D)$ denotes the module of differentials with logarithmic poles along $D$ and put $\DR q R(\log D)=\overset{q}{\wedge}\; \DR 1 R(\log D)$.
	For a tuple of integers $\eb=(n_1,\dots,n_e)$ with $n_i \geq 1$, put
	\[
	\FDR {\eb}q R=(T_1^{n_1}\cdots T_e^{n_e})\cdot \DR q R(\log D)\;\subset\; \DR q R,
	\]
	\[
	\FnuR \eb q = \Ker\big(\FDR {\eb}q R \rmapo{1-C^{-1}} \DR q R(\log D)/d\DR {q-1} R(\log D)\big).
	\]
	
	\begin{proposition}\label{logform2.prop}
		$\FnuR \eb q$ is generated by elements of the form
		\[
		\dlog {x_1}\wedge \cdots\wedge \dlog {x_q}\qwith x_1\in 1+(T_1^{n_1}\cdots T_e^{n_e}),
		\; x_i\in R[\frac{1}{T_1\cdots T_e}]^\times \; (2\leq i\leq q).
		\]
	\end{proposition}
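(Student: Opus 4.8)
The plan is to prove the two inclusions $N\subseteq \FnuR \eb q$ and $\FnuR \eb q\subseteq N$ separately, where $N$ denotes the subgroup of $\FDR \eb q R$ generated by the admissible symbols $\dlog{x_1}\wedge\cdots\wedge\dlog{x_q}$ with $x_1\in 1+(T_1^{n_1}\cdots T_e^{n_e})$ and $x_i\in R[\tfrac1{T_1\cdots T_e}]^\times$ for $i\geq 2$. The inclusion $N\subseteq\FnuR\eb q$ is the routine direction; the reverse generation is the real content.

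For $N\subseteq \FnuR\eb q$, first I would record that the inverse Cartier operator fixes every pure logarithmic form, $C^{-1}(\dlog y)=\dlog y$, so each admissible symbol is annihilated by $1-C^{-1}$ and thus lies in the kernel. To see that it also lies in the submodule $\FDR\eb q R=(T_1^{n_1}\cdots T_e^{n_e})\DR q R(\log D)$, I would write $x_1=1+u$ with $u=T_1^{n_1}\cdots T_e^{n_e}a$ and compute $\dlog{x_1}=x_1^{-1}\,du=x_1^{-1}(T_1^{n_1}\cdots T_e^{n_e})\big(da+a\sum_i n_i\dlog{T_i}\big)\in (T_1^{n_1}\cdots T_e^{n_e})\DR 1 R(\log D)$; since each remaining $\dlog{x_i}\in\DR 1 R(\log D)$, the wedge lands in $\FDR\eb q R$.

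For the reverse inclusion I would argue by dévissage along the filtration $\{\FDR\eb q R\}_{\eb}$, in close parallel with the proof of Proposition \ref{grKM}. Passing to a graded piece gives $\FDR\eb q R/\FDR{\eb+\delta_i}q R\cong \wni q$, and by Lemma \ref{lem-wniCartier} the operators $d$ and $C^{-1}$ induce on these pieces the maps governed by the Cartier isomorphism $\Cni\colon \wnpi{q}\isom \Zni q/\Bni q$. The key claim is that the admissible symbols surject onto the kernel of the induced $1-C^{-1}$ on each graded piece: via $\Cni$ this amounts to solving a Frobenius-semilinear (twisted Artin–Schreier) equation over the residue ring $R_i=R/(T_i)$, whose solvability is the differential-forms analogue of Claim \ref{multiplicity} and rests on $R$ being henselian. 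Solving it and multiplying back by $T_1^{n_1}\cdots T_e^{n_e}$ produces an admissible symbol representing the given graded class. As with Proposition \ref{grKM}, I would reduce the remaining injectivity/surjectivity subtleties to the discrete-valuation-ring case treated by Bloch–Kato, and then assemble: since $C^{-1}$ raises the filtration degree from $\eb$ to $p\eb\geq \eb+\ul 1$, subtracting the extracted symbol pushes the form into $\FnuR{\eb+\delta_i}q$, and the separatedness $\bigcap_{\eb}\FDR\eb q R=0$ together with the observation that the filtered $\dlog$ map identifies these graded pieces with those of relative Milnor $K$-theory computed in Proposition \ref{grKM} allows one to assemble any element of $\FnuR\eb q$ as a finite sum of admissible symbols.

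The hard part will be this graded generation step: identifying the induced kernel of $1-C^{-1}$ through Lemma \ref{lem-wniCartier}, solving the twisted Artin–Schreier equation while keeping the solution of the correct filtration degree, and controlling the horizontal directions $dT_i$ for $i>e$ that do not belong to the logarithmic structure. The delicate bookkeeping is to ensure that the extracted $x_1$ lies in the precise coset $1+(T_1^{n_1}\cdots T_e^{n_e})$ rather than merely in $1+\fm$; this is exactly where the hypotheses that $R$ is henselian and that every $n_i\geq 1$ enter.
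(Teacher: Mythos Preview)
Your easy inclusion $N\subseteq \FnuR\eb q$ is fine and is not addressed explicitly in the paper.  The difficulty is entirely in the reverse direction, and there your outline has a genuine gap.

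The problem is the termination step.  You propose to work by d\'evissage along the filtration $\{\FDR{\eb}q R\}_{\eb}$: extract an admissible symbol matching the image of $\omega$ in a graded piece, subtract, push to a deeper filtration level, and iterate.  You then invoke separatedness $\bigcap_{\eb}\FDR\eb q R=0$ to conclude that $\omega$ is a \emph{finite} sum of admissible symbols.  But separatedness says only that the intersection of all levels is zero; it does not force such an iteration to stop in finitely many steps.  In a merely henselian (not complete) ring the successive corrections you extract form an infinite series with no a priori meaning, and there is no reason the tail should vanish at any finite stage.  A second, related imprecision is your phrase ``the induced $1-C^{-1}$ on each graded piece'': since $C^{-1}$ sends $\wnpi q$ to $\Zni q/\Bni q$ (Lemma~\ref{lem-wniCartier}), there is no endomorphism of a single graded piece, so the ``kernel on the graded piece'' needs to be formulated with more care before one can speak of surjecting onto it by symbols.

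The paper handles exactly this convergence issue by a different route.  It first uses Artin approximation to reduce to the complete ring $R=k[[T_1,\dots,T_d]]$, then filters by powers of a \emph{single} variable $T=T_d$ and invokes Kato's description of the successive quotients $U_n/U_{n+1}$ of the logarithmic kernel.  The new ingredient (Claim~\ref{logform2.claim2}) is that the representatives in $U_n/U_{n+1}$ can be chosen with coefficients in $(T_1^{n_1}\cdots T_{d-1}^{n_{d-1}})$.  In the complete ring the resulting infinite products $\prod_{n\geq n_d}(1+a_{s,n}T^n)$ converge to units in $1+(T_1^{n_1}\cdots T_d^{n_d})$, and this yields an exact expression of $\omega$ as a finite sum of wedges of $\dlog$'s.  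Your Artin--Schreier idea is closer to what is used for the \emph{surjectivity} of $1-C^{-1}$ (Claim~\ref{multiplicity}), not for identifying its kernel; to repair your argument you would at minimum need to pass to the completion (and then descend back), which is precisely the step your outline omits.
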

	\begin{proof}
		The following argument is a variant of Part (B) of the proof of \cite[Prop.1]{katogalois} (see page 224).
		By \cite{artinapprox}, we may replace $R$ by $R=k[[T_1,\dots,T_d]]$.
		Indeed, to use Artin approximation we have to equip any $R$-algebra with the log structure coming via pullback from the canonical one on $(R, D)$ to extend the group $\FnuR \eb q$ to a functor on $R$-algebras $S\mapsto G^{\un}\nu_S(q)$.
		Put $A=k[[T_1,\dots,T_{d-1}]]$ and $T=T_d$ so that $R=A[[T]]$.
		Let $\DR q A(\log E)$ be the module of differential $q$-forms on $\Spec(A)$ 
		with logarithmic poles along $E=\Spec(A/(T_1\cdots T_{d-1})) \subset \Spec A$.
		By loc. cit., we have an isomorphism
		\begin{equation}\label{logform2.eq0}
		(R \otimes_A \DR q A(\log E)) \;\oplus \; (R \otimes_A \DR {q-1} A(\log E))\; \simeq \; \DR q R(\log D)\;;\;
		(a\otimes w,b\otimes v) \;\to\; aw + bv\wedge \dlog T.
		\end{equation}
		For each $n\geq 1$, let $V_n\subset \DR q R(\log D)$ be the image of
		\[
		(T^n A[[T]]\otimes \DR q A(\log E)) \;\oplus \; (T^n A[[T]] \otimes_A \DR {q-1} A(\log E)).
		\]
		We easily check the following.
		
		\begin{claim}\label{logform2.claim1}
			For a tuple of integers $\eb=(n_1,\dots,n_{d-1},n)$ with $n,n_i \geq 1$, we have
			$\FDR \eb q R\subset V_n$ and it coincides with the image of
			\[
			(T^n A[[T]] \otimes_A (T_1^{n_1}\cdots T_{d-1}^{n_{d-1}})\cdot\DR q A(\log E)) \;\oplus \; 
			(T^n A[[T]]\otimes_A (T_1^{n_1}\cdots T_{d-1}^{n_{d-1}})\cdot\DR {q-1} A(\log E)).
			\]
			The map \eqref{logform2.eq0} restricted on $V_n$ induces an isomorphism
			\[
			(T_1^{n_1}\cdots T_{d-1}^{n_{d-1}})\cdot\DR q A(\log E) \;\oplus \; 
			(T_1^{n_1}\cdots T_{d-1}^{n_{d-1}})\cdot\DR {q-1} A(\log E) \isom
			\FDR \eb q R/\FDR {\eb'} q R,
			\]
			\[
			(w,v) \quad \to \quad T^n(w + v\wedge \dlog T).
			\]
			where $\eb'=(n_1,\dots,n_{d-1},n+1)$.
		\end{claim}
		
		Let $I_q$ be the set of strictly increasing functions $\{1,\dots,q\} \to \{1,\dots,d-1\}$.
		For $s\in I_q$ write
		\[
		\omega_s=\dlog {T_{s(1)}}\wedge \cdots\wedge \dlog {T_{s(q)}}\;\in \DR q A(\log E).
		\]
		Then $\omega_s$ ($s\in I_q$) form a basis of $\DR q A(\log E)$ over $A$.
		Put 
		\[
		U_n=V_n\cap \Ker\big(\DR q R(\log D) \rmapo{1-C^{-1}} \DR q R(\log D)/d\DR {q-1} R(\log D)\big).
		\]
		We have the following description of $U_n/U_{n+1}$ (see Part (B) of the proof of \cite[Prop.1]{katogalois}). 
		
		If $(p,n)=1$, we have an isomorphism
		\[
		\rho_n: \DR {q-1} A(\log E) \isom U_n/U_{n+1},
		\]
		\begin{equation}\label{logform2.eq1}
		\sum\limits_{s\in I_{q-1}} a_s\omega_s \mapsto \sum\limits_{s\in I_{q-1}}\dlog {(1+a_s T^n)}\wedge \omega_s \;\; (a_s\in A).
		\end{equation}
		
		If $p|n$, we have an isomorphism
		\[
		\rho_n: \DR {q-1} A(\log E)/\DR {q-1} A(\log E)_{d=0}\;\oplus\; \DR {q-2} A(\log E)/\DR {q-2} A(\log E)_{d=0} \isom U_n/U_{n+1},
		\]
		\begin{equation}\label{logform2.eq2}
		(\sum\limits_{s\in I_{q-1}}a_s\omega_s, \sum\limits_{t\in I_{q-2}}b_t\omega_t) \mapsto \sum\limits_{s\in I_{q-1}}\dlog {(1+a_s T^n)}\wedge \omega_s+\sum\limits_{t\in I_{q-2}}\dlog {(1+b_t T^n)}\wedge\dlog T \wedge \omega_t,
		\end{equation}
		where $a_s,b_t\in A$.
		
		\begin{claim}\label{logform2.claim2}
			Fix a tuple of integers $\eb=(n_1,\dots,n_{d-1},n)$ with $n_i \geq 1$.
			\begin{itemize}
				\item[(1)]
				Assume $(p,n)=1$ and $\rho_n(\omega)\in \FDR \eb q R \mod U_{n+1}$ for
				\[
				\omega=\underset{s\in I_{q-1}}{\sum}\; a_s\omega_s\in  \DR {q-1} A(\log E).
				\]
				Then we have $a_s\in (T_1^{n_1}\cdots T_{d-1}^{n_{d-1}})$ for all $s\in I_{q-1}$.
				\item[(2)]
				Assume $p|n$ and $\rho_n(\omega)\in \FDR \eb q R \mod U_{n+1}$ for 
				\[
				\omega=(\omega_1,\omega_2)\;\in \;
				\DR {q-1} A(\log E)/\DR {q-1} A(\log E)_{d=0}\;\oplus\; \DR {q-2} A(\log E)/\DR {q-2} A(\log E)_{d=0}.
				\]
				Then one can write
				\[
				\omega_1=\underset{s\in I_{q-1}}{\sum}\; a_s\omega_s\mod \DR {q-1} A(\log E)_{d=0},
				\]
				\[
				\omega_2=\underset{t\in I_{q-2}}{\sum}\; b_t\omega_t \mod \DR {q-2} A(\log E)_{d=0},
				\]
				with $a_s,b_t\in (T_1^{n_1}\cdots T_{d-1}^{n_{d-1}})$ for all $s\in I_{q-1}$ and $t\in I_{q-2}$.
			\end{itemize}
		\end{claim} 
		\begin{proof}[Proof of Claim \ref{logform2.claim2}]\renewcommand{\qedsymbol}{}
			Assume $(p,n)=1$. From \eqref{logform2.eq1} we get
			\[
			\rho_n(\underset{s\in I_{q-1}}{\sum}\; a_s\omega_s)=
			T^n \underset{s\in I_{q-1}}{\sum}\; da_s\wedge\omega_s\;\pm\;
			nT^n  \underset{s\in I_{q-1}}{\sum}\; a_s \omega_s\wedge \dlog{T} \;\mod U_{n+1}.
			\]
			Hence (1) follows from Claim \ref{logform2.claim1} noting $da_s\wedge\omega_s\in \DR q A(\log E)$. 
			Next assume $p|n$. From \eqref{logform2.eq2} we get
			\[
			\rho_n((\underset{s\in I_{q-1}}{\sum}\; a_s\omega_s,\underset{t\in I_{q-2}}{\sum}\; b_t\omega_t))\;=\;
			T^n \underset{s\in I_{q-1}}{\sum}\; da_s \wedge \omega_s \;\pm\;
			T^n \underset{t\in I_{q-2}}{\sum}\; db_t \wedge \omega_t \wedge\dlog T . 
			\]
			By Claim \ref{logform2.claim1}, if the left hand side lies in $\FDR \eb q R \mod U_{n+1}$, we get
			\[
			da_s \wedge \omega_s\in (T_1^{n_1}\cdots T_{d-1}^{n_{d-1}})\cdot \DR q A(\log E),\quad
			db_t \wedge \omega_t\in (T_1^{n_1}\cdots T_{d-1}^{n_{d-1}})\cdot \DR{q-1} A(\log E).
			\]
			Thus the desired assertion follows from the following.
			
			\begin{claim}
				Assume $d\eta\in (T_1^{n_1}\cdots T_{d-1}^{n_{d-1}})\cdot \DR q A(\log E)$ for 
				$\eta=\underset{s\in I_{q-1}}{\sum}\; a_s\omega_s\in \DR {q-1} A(\log E)$.
				Then there exist $\alpha_s\in A$ for $s\in I_{q-1}$ such that $a_s-\alpha_s\in (T_1^{n_1}\cdots T_{d-1}^{n_{d-1}})$
				for all $s$ and that $d\xi=0$ for $\xi=\underset{s\in I_{q-1}}{\sum}\; \alpha_s\omega_s$.
			\end{claim}
			
			Indeed write $a_s=\alpha_s+a'_s$ where $a'_s\in (T_1^{n_1}\cdots T_{d-1}^{n_{d-1}})$ and 
			$\alpha_s$ are expanded as
			\[
			\underset{i_1,\dots,i_{d-1}}{\sum}\; \alpha_{s,i_1,\dots,i_{d-1}} T_1^{i_1}\cdots T_{d-1}^{i_{d-1}}\;\;
			(\alpha_{s,i_1,\dots,i_{d-1}}\in k),
			\]
			where $i_1,\dots,i_{d-1}$ range over non-negative integers such that there exists $1\leq \nu\leq d-1$ with $i_\nu<n_\nu$.
			Then one easily check that $\alpha_s$ satisfy the desired condition.
		\end{proof}

		Now we can finish the proof of Proposition \ref{logform2.prop}. In the following we fix a tuple of integers $\eb=(n_1,\dots,n_{d-1},n_d)$ with $n_i \geq 1$ and take $\omega\in \FDR\eb q R$.
		By Claim \ref{logform2.claim2} there exist a series of elements
		\[
		\begin{aligned}
		& a_{s,n}\in (T_1^{n_1}\cdots T_{d-1}^{n_{d-1}})\;\; (s\in I_{q-1},\; n\geq n_d),\\
		& b_{t,pm}\in (T_1^{n_1}\cdots T_{d-1}^{n_{d-1}})\;\; (t\in I_{q-2},\; m\geq n_d/p),\\
		\end{aligned}
		\]
		such that
		\[
		\begin{aligned}
		\omega =
		& \underset{n\geq n_d}{\sum}\;\underset{s\in I_{q-1}}{\sum}\; \dlog{(1+a_{s,n}T^n)} \wedge \omega_s \;+\; 
		\underset{pm\geq n_d}{\sum}\;\underset{t\in I_{q-2}}{\sum}\; \dlog{(1+b_{t,m}T^{pm})} \wedge \dlog{T}\wedge \omega_t\\
		=&\underset{s\in I_{q-1}}{\sum}\Big(\underset{n\geq n_d}{\sum}\; \dlog{(1+a_{s,n}T^n)}\Big)
		\wedge \omega_s \;+\; 
		\underset{t\in I_{q-2}}{\sum}\Big(\underset{pm\geq n_d}{\sum}\; \dlog{(1+b_{t,m}T^{pm})}\Big)\wedge \dlog{T}\wedge \omega_t
		\end{aligned}
		\]
		The products
		\[
		x=\underset{n\geq n_d}{\prod}\; (1+a_{s,n}T^n),\quad y=\underset{pm\geq n_d}{\prod}\; (1+b_{t,m}T^{pm})
		\]
		converge in $1+(T_1^{n_1}\cdots T_{d}^{n_{d}})\subset R^\times$ and we get
		\[
		\omega = \underset{s\in I_{q-1}}{\sum}\; \dlog{x} \wedge \omega_s \;+\; 
		\underset{t\in I_{q-2}}{\sum}\; \dlog{y} \wedge \dlog{T}\wedge \omega_t.
		\]
		This completes the proof of Proposition \ref{logform2.prop}.
	\end{proof}
	
	\begin{remark}
		In fact, the above proof shows that the exactness in the middle of the complex in Theorem \ref{logtocoherent} already holds in the Nisnevich topology. 
	\end{remark}

	\bigskip
	
	\section{Filtered de Rham-Witt complexes}
	\bigskip
	
	Let $X, D, j\colon: U\hookrightarrow X$ be as before. Let $\{D_{\lambda}\}_{\lambda\in \Lambda}$ be the (smooth) irreducible components of $D$. We endow $\Z^{\Lambda}$ with a semi-order by 
	\begin{equation}\label{semi.order} \underline n:=\nlam \geq \underline {n'}:=({n}_{\lambda}')_{\lambda\in\Lambda} \; \text{if} \; n_{\lambda} \geq {n}_{\lambda}' \;\text{for all} \; \lambda\in\Lambda.\end{equation}
	For $\underline n=\nlam \in \Z^{\Lambda}$ let
	$$
	\D n=\sum\limits_{\lambda\in\Lambda}n_{\lambda}D_{\lambda}
	$$
	be the associated divisor.
	\subsection{Definition and basic properties}
	Let $E$ be a Cartier divisor on $X$. It is given by $\{V_i,f_i\}$, where $\{V_i\}_i$ is an open cover of $X$ and $f_i\in \Gamma(V_i, \mathcal{M}_X^{\times})$ is a section of the sheaf of total fractional ring.
	\begin{definition}\label{def.filter.divisor}
		We define an invertible $W_m\cO_X$-module $W_m\cO_X(E)$ associated to $E$ as:
		\[ W_m\cO_X(E)_{|V_i}:= W_m\cO_{V_i}\cdot [\frac{1}{f_i}]_m \subset W_m\mathcal{M}_{V_i}, \]
		where $[\cdot]_m\colon \cO \to W_m\cO$ the Teichm\"uller lifting.
	\end{definition}
	
	This definition gives us an invertible sheaf $W_m\cO_X(D_{\un})$ for any $D_{\un}$ as above.
	
	\begin{lemma}\label{FVRon.inv}
		We have \begin{itemize}
			\item[(i)] $F(W_{m+1}\cO_X(D_{\un}))\subset W_{m}\cO_X(D_{p\un})$;
			\item[(ii)] $V(W_m\cO_X(D_{p\un}))\subset W_{m+1}\cO_X(D_{\un})$; 
			\item[(iii)] $R(W_{m+1}\cO_X(D_{\un}))\subset W_{m}\cO_X(D_{\un}) $.
		\end{itemize}
	\end{lemma}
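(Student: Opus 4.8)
The plan is to verify each of the three inclusions locally on the open cover $\{V_i\}$ on which the divisor $D_{\un}$ is defined by a single equation $f_i\in \Gamma(V_i,\mathcal{M}_X^\times)$, using the explicit description of the modules together with the standard identities for Witt vectors. The key preliminary remark is that $D_{p\un}=\sum_\lambda p n_\lambda D_\lambda$ is then defined on $V_i$ by $f_i^p$, so that $W_m\cO_X(D_{p\un})_{|V_i}=W_m\cO_{V_i}\cdot[1/f_i^p]_m$. Throughout I would use that $F$ and $R$ are ring homomorphisms, that $F([a]_{m+1})=[a^p]_m$ and $R([a]_{m+1})=[a]_m$ on Teichm\"uller representatives, and the projection formula $a\cdot V(b)=V(F(a)\cdot b)$ for $a\in W_{m+1}\cO$, $b\in W_m\cO$.

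For (i) I would take a local generator $x\cdot[1/f_i]_{m+1}$ with $x\in W_{m+1}\cO_{V_i}$ and, using multiplicativity of $F$, compute
\[
F\bigl(x\cdot[1/f_i]_{m+1}\bigr)=F(x)\cdot F([1/f_i]_{m+1})=F(x)\cdot[1/f_i^p]_m,
\]
which lies in $W_m\cO_{V_i}\cdot[1/f_i^p]_m=W_m\cO_X(D_{p\un})_{|V_i}$ because $F(x)\in W_m\cO_{V_i}$. Statement (iii) is entirely analogous, replacing $F$ by $R$ and using $R([1/f_i]_{m+1})=[1/f_i]_m$, giving $R(x\cdot[1/f_i]_{m+1})=R(x)\cdot[1/f_i]_m\in W_m\cO_X(D_{\un})_{|V_i}$.

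Statement (ii) is the only one requiring the projection formula rather than mere multiplicativity. Here I would take a local generator $y\cdot[1/f_i^p]_m$ with $y\in W_m\cO_{V_i}$, rewrite $[1/f_i^p]_m=F([1/f_i]_{m+1})$, and apply the projection formula with $a=[1/f_i]_{m+1}$:
\[
V\bigl(y\cdot[1/f_i^p]_m\bigr)=V\bigl(F([1/f_i]_{m+1})\cdot y\bigr)=[1/f_i]_{m+1}\cdot V(y),
\]
which lies in $W_{m+1}\cO_{V_i}\cdot[1/f_i]_{m+1}=W_{m+1}\cO_X(D_{\un})_{|V_i}$ since $V(y)\in W_{m+1}\cO_{V_i}$.

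Since all three computations are direct manipulations with the defining generators, I do not expect a serious obstacle. The only points that need care are the bookkeeping that passing from $\un$ to $p\un$ corresponds to raising the local equation $f_i$ to its $p$-th power (so that $F$ and $V$ indeed shift the divisor in the stated way), and getting the direction of the projection formula right in (ii) so that the Verschiebung of a section twisted by $[1/f_i^p]_m$ reassembles as a section twisted by $[1/f_i]_{m+1}$.
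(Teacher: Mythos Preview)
Your proposal is correct and follows exactly the same approach as the paper's proof, which simply says that (i) and (iii) are clear from the definition and that (ii) follows from the identities $V(x\cdot Fy)=V(x)\cdot y$ and $F[y]_{m+1}=[y^p]_m$. You have merely spelled out the local computations that the paper leaves implicit.
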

	\begin{proof}
		The claim (i) and (iii) are clear by the definition. For (ii), it follows from the equalities $V(x\cdot Fy)=V(x)\cdot y$ and $F[y]_{m+1}=[y^p]_m$.
	\end{proof}
	Let $W_m\Omega_X^{\ast}(\log D)$ be the de Rham-Witt complex with respect to the canonical log structure $(X, j_*\cO_U^{\times}\cap \cO_X)$ defined in \cite[\S4]{hyodokato}. 
	\begin{definition} For  $\underline n=\nlam \in \Z^{\Lambda}$, we define the filtered de Rham-Witt complex as
		\[W_m\Omega^*_{X|D_{\un}}:=  W_m\cO_X(-D_{\un})\cdot W_m\Omega_X^{\ast}(\log D) \subset j_*W_m\Omega_U^{\ast},\]
		where $W_m\Omega_X^{\ast}(\log D)$ is canonically viewed as a subsheaf of $j_*W_m\Omega_U^{\ast}$(cf. \cite[(4.20)]{hyodokato}). 
	\end{definition}
	Note that \[W_m\Omega^*_{X|D_{\un}} \cong W_m\Omega_X^{\ast}(\log D)\otimes_{W_m\cO_X} W_m\cO_X(-D_{\un}). \]
	In particular, $W_1\Omega^*_{X|D_{\un}}= \Omega^*_{X}(\log D)\otimes \cO_X(-D_{\un})=\Omega^*_{X|D_{\un}} $(cf. notation in Theorem \ref{logtocoherent}). 
	\begin{lemma}\label{FVR.on.filtration}
		We have the following inclusions \begin{itemize}
			\item[(i)] $F(W_{m+1}\Omega^*_{X|D_{\un}})\subset W_{m}\Omega^*_{X|D_{p\un}}$;
			\item[(ii)] $V(W_m\Omega^*_{X|D_{p\un}})\subset W_{m+1}\Omega^*_{X|D_{\un}}$; 
			\item[(iii)] $R(W_{m+1}\Omega^*_{X|D_{\un}})\subset W_{m}\Omega^*_{X|D_{\un}} $.
		\end{itemize}
	\end{lemma}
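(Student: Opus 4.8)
The plan is to reduce all three inclusions to the corresponding statements for the invertible sheaves in Lemma \ref{FVRon.inv}, combined with the standard behavior of $F$, $V$, $R$ on the log de Rham--Witt complex $W_m\Omega_X^*(\log D)$ of \cite{hyodokato}. Since all three are inclusions of subsheaves of $j_*W_m\Omega_U^*$, I may check them locally on $X$, where each $D_\lambda$ is principal: if $f_\lambda$ is a local equation of $D_\lambda$ and $f=\prod_\lambda f_\lambda^{n_\lambda}$, then by Definition \ref{def.filter.divisor} the sheaf $W_m\cO_X(-D_{\un})$ is locally free of rank one on the Teichm\"uller generator $[f]_m$, and $W_m\Omega^*_{X|D_{\un}}$ is locally generated, inside $j_*W_m\Omega_U^*$, by products $a\omega$ with $a\in W_m\cO_X(-D_{\un})$ and $\omega\in W_m\Omega_X^*(\log D)$. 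By additivity of $F$, $V$, $R$ it suffices to treat a single such product. Here I use throughout that $W_m\cO_X(-D_{\un})=W_m\cO_X(D_{-\un})$, so that Lemma \ref{FVRon.inv} may be applied with $\un$ replaced by $-\un\in\Z^{\Lambda}$.

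For (i) and (iii) the argument is purely formal, because $F\colon W_{m+1}\Omega^*\to W_m\Omega^*$ and $R\colon W_{m+1}\Omega^*\to W_m\Omega^*$ are homomorphisms of graded rings, hence multiplicative for the product in $j_*W_\bullet\Omega_U^*$. Thus $F(a\omega)=F(a)F(\omega)$ and $R(a\omega)=R(a)R(\omega)$. Lemma \ref{FVRon.inv} (applied to $-\un$) gives $F(a)\in W_m\cO_X(-D_{p\un})$ and $R(a)\in W_m\cO_X(-D_{\un})$, while the corresponding operators on the log complex give $F(\omega),\,R(\omega)\in W_m\Omega_X^*(\log D)$. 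Multiplying, $F(a\omega)\in W_m\Omega^*_{X|D_{p\un}}$ and $R(a\omega)\in W_m\Omega^*_{X|D_{\un}}$, which is exactly (i) and (iii).

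The only case that is not formal is (ii), since $V$ is additive but not multiplicative; this is the main obstacle and it is resolved by the projection formula $V(x\cdot Fy)=V(x)\cdot y$, precisely as in the proof of Lemma \ref{FVRon.inv}(ii). A local section $a$ of $W_m\cO_X(-D_{p\un})$ can be written $a=c\,[f^p]_m=c\,F[f]_{m+1}$ with $c\in W_m\cO_X$, using $[f]_m^p=[f^p]_m=F[f]_{m+1}$. Setting $x=c\omega\in W_m\Omega_X^*(\log D)$ and $y=[f]_{m+1}\in W_{m+1}\cO_X\subset W_{m+1}\Omega_X^*(\log D)$, so that $a\omega=x\cdot Fy$, the projection formula yields
\[ V(a\omega)=V\big(x\cdot Fy\big)=V(x)\cdot y=V(c\omega)\cdot[f]_{m+1}. \]
Now $V(c\omega)\in W_{m+1}\Omega_X^*(\log D)$ because $V$ preserves the log de Rham--Witt complex, and $[f]_{m+1}$ is the local generator of $W_{m+1}\cO_X(-D_{\un})$; hence the right-hand side lies in $W_{m+1}\cO_X(-D_{\un})\cdot W_{m+1}\Omega_X^*(\log D)=W_{m+1}\Omega^*_{X|D_{\un}}$. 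This proves (ii) and completes the argument.
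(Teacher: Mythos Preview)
Your argument is correct and follows essentially the same approach as the paper: the paper's proof is the single sentence ``This follows from Lemma~\ref{FVRon.inv} and the basic properties of de Rham--Witt complex \cite[\S4.1]{hyodokato} \cite[Prop.~1.5]{lorenzon},'' and you have simply written out what this means, using the multiplicativity of $F$ and $R$ for (i), (iii) and the projection formula $V(x\cdot Fy)=V(x)\cdot y$ for (ii).
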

	\begin{proof}
		This follows from Lemma \ref{FVRon.inv} and the basic properties of de Rham-Witt complex \cite[\S 4.1]{hyodokato} \cite[Prop. 1.5]{lorenzon}.
	\end{proof}
	
	\subsection{Canonical filtration}
	On $W_m\Omega_X^{\ast}(\log D)$, we can define the canonical filtration as in \cite[I (3.1.1)]{illusiederham}:
	\begin{equation*}
	\Fil^sW_m\Omega_X^{r}(\log D):=\begin{cases} \hfil W_m\Omega_X^{r}(\log D),  &\mathrm{if} \; s\leq 0\; \mathrm{or} \; r\leq 0,\\
	\Ker(R^{m-s}: W_m\Omega_X^{r}(\log D) \to W_s\Omega_X^{r}(\log D)),  &\mathrm{if} \  1\leq s\leq m,\\
	\hfil 0, &\mathrm{if} \ s\geq m.
	\end{cases}
	\end{equation*}
	For $1\leq s \leq m$, we have \cite[Prop. 1.16]{lorenzon}:
	\[ \Fil^s W_m\Omega^r_{X}(\log D)=V^sW_{m-s}\Omega^r_{X}(\log D)+dV^sW_{m-s}\Omega^{r-1}_{X}(\log D). \]
	
	\begin{definition}
		For $1\leq s \leq m$, we define
		\begin{equation*}
		\Fil^sW_m\Omega_{X|D_{\un}}^{r}:=\begin{cases} \hfil W_m\Omega_{X|D_{\un}}^{r},  &\mathrm{if} \; s\leq 0\; \mathrm{or} \; r\leq 0,\\
		\Ker(R^{m-s}: W_m\Omega_{X|D_{\un}}^{r} \to W_s\Omega_{X|D_{\un}}^{r}),  &\mathrm{if} \  1\leq s\leq m,\\
		\hfil 0, &\mathrm{if} \ s\geq m.
		\end{cases}
		\end{equation*}

	\end{definition}  
	
	\begin{theorem}\label{standardfiltration}
		We have 
		\[ \Fil^s W_m\Omega^r_{X|D_{\un}}=V^s W_{m-s}\Omega^r_{X|D_{p^s\un}}+dV^sW_{m-s}\Omega^{r-1}_{X|D_{p^s\un}}. \]
		
	\end{theorem}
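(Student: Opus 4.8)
The plan is to reduce the statement to an étale-local computation and then transport the corresponding formula for the logarithmic complex, namely
\[ \Fil^sW_m\Omega^r_X(\log D)=V^sW_{m-s}\Omega^r_X(\log D)+dV^sW_{m-s}\Omega^{r-1}_X(\log D), \]
recalled from \cite[Prop. 1.16]{lorenzon} just before the definition, across multiplication by a Teichm\"uller monomial. Working on a strict henselization I use the simple normal crossing hypothesis to choose regular parameters with $D_\lambda=\{T_\lambda=0\}$, and I write $T^{\un}=\prod_\lambda T_\lambda^{n_\lambda}$. By Definition \ref{def.filter.divisor} the invertible module $W_m\cO_X(-D_{\un})$ is locally generated by the Teichm\"uller lift $[T^{\un}]_m$, so that locally $W_m\Omega^r_{X|D_{\un}}=[T^{\un}]_m\cdot W_m\Omega^r_X(\log D)$. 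Since $R$ commutes with Teichm\"uller lifts and multiplication by $[T^{\un}]_s$ is injective on $W_s\Omega^r_X(\log D)$ (because $T^{\un}$ is a unit on $U$ and $W_s\Omega^r_X(\log D)\subset j_*W_s\Omega^r_U$ by \cite[(4.20)]{hyodokato}), the identity $R^{m-s}([T^{\un}]_m\xi)=[T^{\un}]_sR^{m-s}\xi$ shows that locally $\Fil^sW_m\Omega^r_{X|D_{\un}}=[T^{\un}]_m\cdot\Fil^sW_m\Omega^r_X(\log D)$.

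Combining this with Lorenzon's formula reduces the theorem to the purely local identity
\[ [T^{\un}]_m\big(V^sW_{m-s}\Omega^r_X(\log D)+dV^sW_{m-s}\Omega^{r-1}_X(\log D)\big)=V^sW_{m-s}\Omega^r_{X|D_{p^s\un}}+dV^sW_{m-s}\Omega^{r-1}_{X|D_{p^s\un}}. \]
The engine is the commutation rule $[T^{\un}]_mV^s\eta=V^s([T^{p^s\un}]_{m-s}\eta)$, obtained by iterating the relations $V(x\cdot Fy)=Vx\cdot y$ and $F[y]_{m+1}=[y^p]_m$ already used in Lemma \ref{FVRon.inv}. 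Together with the local model applied at level $m-s$ and divisor $p^s\un$, it gives $[T^{\un}]_mV^sW_{m-s}\Omega^r_X(\log D)=V^sW_{m-s}\Omega^r_{X|D_{p^s\un}}$ on the nose, which settles the $V^s$-summand and at the same time shows why the divisor is rescaled by $p^s$.

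The remaining and main point is the $dV^s$-summand, where $[T^{\un}]_m$ must be moved past the exterior derivative. From $d[T^{\un}]_m=[T^{\un}]_m\,d\log[T^{\un}]$ with $d\log[T^{\un}]=\sum_\lambda n_\lambda\,d\log[T_\lambda]$ a logarithmic form (which incidentally shows $W_m\Omega^\bullet_{X|D_{\un}}$ is stable under $d$), the Leibniz rule and the commutation rule give, for $y\in W_{m-s}\Omega^{r-1}_X(\log D)$,
\[ [T^{\un}]_m\,dV^sy=dV^s([T^{p^s\un}]_{m-s}y)-d\log[T^{\un}]\wedge V^s([T^{p^s\un}]_{m-s}y). \]
The first term lies in $dV^sW_{m-s}\Omega^{r-1}_{X|D_{p^s\un}}$; the correction term is absorbed into $V^sW_{m-s}\Omega^r_{X|D_{p^s\un}}$ via the projection formula $V^s(\alpha)\wedge\beta=V^s(\alpha\wedge F^s\beta)$ together with the Frobenius-invariance $F^sd\log[T^{\un}]=d\log[T^{\un}]$, using that $d\log[T^{\un}]\wedge W_{m-s}\Omega^{r-1}_{X|D_{p^s\un}}\subseteq W_{m-s}\Omega^r_{X|D_{p^s\un}}$. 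Running this over all $y$ yields both inclusions of the local identity simultaneously. I expect this last bookkeeping — commuting $[T^{\un}]$ past $d$ and $V^s$ while checking that the Leibniz correction lands in the relative module with the correct divisor $p^s\un$ — to be the only genuine obstacle; the inclusion ``$\supseteq$'' can alternatively be read off directly from Lemma \ref{FVR.on.filtration}(ii), the $d$-stability just noted, and the vanishing $R^{m-s}V^s=0$.
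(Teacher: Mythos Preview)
Your argument is correct and follows essentially the same route as the paper: both reduce to the inclusion $\subseteq$, identify $\Fil^sW_m\Omega^r_{X|D_{\un}}$ with $W_m\cO_X(-D_{\un})\cdot\Fil^sW_m\Omega^r_X(\log D)$ via invertibility, invoke Lorenzon's formula, and then push the generator of $W_m\cO_X(-D_{\un})$ past $V^s$ and $dV^s$ using $x\cdot Vy=V(Fx\cdot y)$. The only cosmetic difference is in absorbing the Leibniz correction: the paper computes $d[t]_mV^s(z')=V^s([t]_{m-s}^{p^s-1}d[t]_{m-s}z')$ directly from Illusie's identity $d[t]_mV(z)=V([t]_{m-1}^{p-1}d[t]_{m-1}z)$, while you rewrite $d[T^{\un}]_m=[T^{\un}]_m\,d\log[T^{\un}]$ and use the $F$-invariance of $d\log[T^{\un}]$ together with the projection formula --- these are the same computation, since $F(d[t]_{m+1})=[t]_m^{p-1}d[t]_m$ is exactly $F(d\log[t])=d\log[t]$ after factoring out $[t]^p$.
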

	\begin{proof}
		We only need to show the inclusion $"\subseteq"$. 
		By the definition of the canonical filtration and the fact that
		$W_m\cO_X(-D_{\un})$ is an invertible sheaf, we have 
		\[ \Fil^s W_m\Omega^r_{X|D_{\un}}=W_m\cO_X(-D_{\un})\cdot \Fil^s W_m\Omega^r_{X}(\log D), \]
		and it suffices to show that the group on the right hand side is contained in
		\[V^s W_{m-s}\Omega^r_{X|D_{p^s\un}}+dV^sW_{m-s}\Omega^{r-1}_{X|D_{p^s\un}}. \]
		Using the formula $ x\cdot Vy=V(F(x)\cdot y)$ repeatedly, we see that for any $\omega \in W_{m-s}\Omega_X^r(\log D)$, $\omega'\in W_{m-s}\Omega_X^{r-1}(\log D)$ and $x\in W_m\cO_X(-D_{\un})$, 
		\begin{equation}\label{left.in.right} x\cdot (V^s(\omega)+dV^{s}(\omega'))=V^s(F^s(x)\cdot \omega)+dV^{s}(F^s(x)\cdot\omega')\pm dx\cdot V^s(\omega'). \end{equation}
		By our definition, we have $F^s(x) \cdot\omega\in W_{m-s}\Omega_{X|D_{p^s\un}}^r$ and $F^s(x) \cdot\omega'\in W_{m-s}\Omega_{X|D_{p^s\un}}^{r-1}$.
		It suffices to prove that $dx\cdot V^s(\omega')\in V^sW_{m-s}\Omega_{X|D_{p^s\un}}^r$. Since the problem is local on $X$, it is enough to show the following claim:
		\begin{claim}
			For any $t\in \cO_X$, and $z'\in W_{m-s}\Omega_X^r(\log D)$,
			\[ d[t]_mV^{s}(z')=V^s([t]_{m-s}^{p^s-1}d[t]_{m-s}z').  \] 
		\end{claim}
		
		Indeed, we know (cf.\cite[I,Prop. 1.5.2]{illusiederham}), 
		\[ d[t]_mV(z)=V([t]_{m-1}^{p-1}d[t]_{m-1}z)\quad\text{for any $t\in \cO_X$, and $z\in W_{m-1}\Omega_X^r(\log D) $}. \]
		Using this formula and $ x\cdot Vy=V(F(x)\cdot y)$ repeatedly, we get the claim.
		
		
	\end{proof}
	\begin{corollary}
		There are the following inclusions \begin{itemize}
			\item[(i)] $F(\Fil^s W_m\Omega^r_{X|D_{\un}})\subset \Fil^{s-1} W_{m-1}\Omega^r_{X|D_{p\un}}$;
			\item[(ii)] $V(\Fil^s W_m\Omega^r_{X|D_{p\un}})\subset \Fil^{s+1} W_{m+1}\Omega^r_{X|D_{\un}}$; 
			\item[(iii)] $R(\Fil^s W_m\Omega^r_{X|D_{\un}})\subset \Fil^s W_{m-1}\Omega^r_{X|D_{\un}}$.
		\end{itemize}
	\end{corollary}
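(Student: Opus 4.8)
The plan is to read the canonical filtration off as a kernel and thereby bypass the explicit $V^s$-description of Theorem~\ref{standardfiltration} altogether. By definition, for $1\le s\le m$,
\[
\Fil^s W_m\Omega^r_{X|D_{\un}}=\Ker\bigl(R^{m-s}\colon W_m\Omega^r_{X|D_{\un}}\to W_s\Omega^r_{X|D_{\un}}\bigr),
\]
the target being well defined by iterating Lemma~\ref{FVR.on.filtration}(iii). I would then combine two facts: (a) Lemma~\ref{FVR.on.filtration}, which says that $F$, $V$ and $R$ carry the relative subsheaf $W_\bullet\Omega^r_{X|D_{\un}}$ into the relative subsheaf with the divisor twisted as indicated; and (b) the standard identities $RF=FR$, $RV=VR$ and $R^aR^b=R^{a+b}$, which hold on the ambient complex $j_*W_\bullet\Omega^\bullet_U$ and hence on our subsheaves (cf. \cite[\S4.1]{hyodokato}, \cite[Prop.~1.5]{lorenzon}).

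Each inclusion is then a one-line kernel chase. For (iii), if $\alpha\in\Fil^sW_m\Omega^r_{X|D_{\un}}$ then $R\alpha\in W_{m-1}\Omega^r_{X|D_{\un}}$ by Lemma~\ref{FVR.on.filtration}(iii), and $R^{(m-1)-s}(R\alpha)=R^{m-s}\alpha=0$, so $R\alpha\in\Fil^sW_{m-1}\Omega^r_{X|D_{\un}}$. For (i), $F\alpha\in W_{m-1}\Omega^r_{X|D_{p\un}}$ by Lemma~\ref{FVR.on.filtration}(i), and
\[
R^{m-s}(F\alpha)=F(R^{m-s}\alpha)=F(0)=0,
\]
whence $F\alpha\in\Ker\bigl(R^{(m-1)-(s-1)}\bigr)=\Fil^{s-1}W_{m-1}\Omega^r_{X|D_{p\un}}$. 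For (ii), with $\alpha\in\Fil^sW_m\Omega^r_{X|D_{p\un}}$, one has $V\alpha\in W_{m+1}\Omega^r_{X|D_{\un}}$ by Lemma~\ref{FVR.on.filtration}(ii), and $R^{m-s}(V\alpha)=V(R^{m-s}\alpha)=0$, so $V\alpha\in\Ker\bigl(R^{(m+1)-(s+1)}\bigr)=\Fil^{s+1}W_{m+1}\Omega^r_{X|D_{\un}}$.

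The kernel chase above applies verbatim for $1\le s\le m$, reading $W_0\Omega^r_{X|D}=0$ so that a kernel mapping into level $0$ is the whole module (this also absorbs the mildly exceptional case (i) with $s=1$, where $\Fil^0$ is the whole module). The genuine extremes then need no work: for $s\le 0$ or $r\le 0$ the source step is the whole module and the asserted inclusion is exactly the unfiltered inclusion of Lemma~\ref{FVR.on.filtration}, while for $s\ge m$ the source step is zero and the claim is vacuous.

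I do not anticipate a real obstacle: once $\Fil^s$ is presented as $\Ker R^{m-s}$, every inclusion is formal, reducing to the commutation of $R$ with $F$ and $V$ and to tracking the divisor twists $\un\leftrightarrow p\un$. The one place that would genuinely be delicate — and the reason I would \emph{not} route the argument through the summand $dV^sW_{m-s}\Omega^{r-1}_{X|D_{p^s\un}}$ of Theorem~\ref{standardfiltration} — is that neither $F$ nor $V$ commutes with $d$: from $FdV=d$ and $FVd=p\,d$ one sees $dV\ne Vd$, so pushing $F$ or $V$ across that summand is awkward. The kernel description removes this difficulty entirely.
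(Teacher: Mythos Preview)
Your argument is correct and, in fact, more direct than the route the paper takes. The paper's one-line proof cites Lemma~\ref{FVR.on.filtration} together with the identities $FV=p=VF$ and $FdV=d$; those identities are exactly what one needs to push $F$ and $V$ through the explicit summands $V^sW_{m-s}\Omega^r_{X|D_{p^s\un}}$ and $dV^sW_{m-s}\Omega^{r-1}_{X|D_{p^s\un}}$ furnished by Theorem~\ref{standardfiltration}. So the intended argument there goes via the structural description of $\Fil^s$, and the manipulation of the $dV^s$-term (e.g.\ $F(dV^sy)=dV^{s-1}y$, $V(dV^sy)=p\,dV^{s+1}y$) is precisely the ``awkward'' step you flagged and chose to avoid.

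Your approach sidesteps Theorem~\ref{standardfiltration} entirely: presenting $\Fil^s$ as $\Ker R^{m-s}$ and using only $RF=FR$, $RV=VR$ together with Lemma~\ref{FVR.on.filtration} reduces each inclusion to a trivial kernel chase. This is a genuine simplification --- it makes the corollary independent of the (harder) explicit description, and it clarifies that the result is really a formal consequence of $F,V$ being morphisms of the pro-system $\{W_\bullet\Omega^r_{X|D_{\un}},R\}$. The paper's route, on the other hand, buys nothing extra here; its advantage is only contextual, since Theorem~\ref{standardfiltration} is needed elsewhere anyway. Your treatment of the boundary cases ($s\le 0$, $s\ge m$, and $s=1$ in (i)) is also fine.
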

	\begin{proof}
		This follows from Lemma \ref{FVR.on.filtration}, $FV=p=VF$ and $FdV=d$.
	\end{proof}
	
	For $\un\geq \ul{1}$, i.e. $\un\in \bN^{\Lambda}$, we have
	\[ W_m\Omega_{X|D_{\un}}^r \subset W_m\Omega_X^r\]
	Indeed, for $m=1$ this follows from the fact 
	$W_1\Omega_{X|D_{\un}}^r=\Omega^r_X(\log D)(-D_{\un}) \subset \Omega_X^r$. 
	Then the claim follows by induction on $m$ using Theorem \ref{standardfiltration}.

	\begin{lemma}\label{kernelps} 
		For $\un\in \bN^{\Lambda}$, we have
		\[   \Fil^s W_m\Omega^r_{X} \cap W_m\Omega^r_{X|D_{\un}} =\Fil^s W_m\Omega^r_{X|D_{\un}},   \]
		and
		\[\Ker(p^s: \FDRW{m}{r}{\un} \to \FDRW{m}{r}{\un}  ) =\Fil^{m-s}\FDRW{m}{r}{\un}.\]
		In particular the multiplication by $p^s$ induces an injective homomorphism
		\begin{equation}\label{ulp^s}
		\ul{p^s}: \FDRW{m-s}{r}{\un}  \hookrightarrow \FDRW{m}{r}{\un}.
		\end{equation}
		
	\end{lemma}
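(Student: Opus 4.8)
The plan is to establish the three assertions in order, reducing each to the inclusions $W_\bullet\Omega^\bullet_{X|D_{\un}}\subseteq W_\bullet\Omega^\bullet_X$ (valid for $\un\in\bN^{\Lambda}$, which was just proved), the compatibility of the restriction maps with these inclusions (Lemma \ref{FVR.on.filtration}(iii)), and the classical facts for the absolute de Rham--Witt complex $W_\bullet\Omega^\bullet_X$ from \cite{illusiederham}. The cases $s\leq 0$ and $s\geq m$ are trivial throughout, so I only treat $1\leq s\leq m$.

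For the first identity I would argue both inclusions simultaneously, straight from the defining description $\Fil^s=\Ker R^{m-s}$. Iterating Lemma \ref{FVR.on.filtration}(iii), the map $R^{m-s}$ carries $W_m\Omega^r_{X|D_{\un}}$ into $W_s\Omega^r_{X|D_{\un}}$, compatibly with $R^{m-s}$ on the absolute complex under the injective inclusion $W_s\Omega^r_{X|D_{\un}}\hookrightarrow W_s\Omega^r_X$. Hence, for $\omega\in W_m\Omega^r_{X|D_{\un}}$, the vanishing $R^{m-s}\omega=0$ in $W_s\Omega^r_{X|D_{\un}}$ is equivalent, by that injectivity, to $R^{m-s}\omega=0$ in $W_s\Omega^r_X$. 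The former condition characterizes $\Fil^sW_m\Omega^r_{X|D_{\un}}$, while the latter characterizes membership of $\omega$ in $\Fil^sW_m\Omega^r_X$. Intersecting the second condition with $W_m\Omega^r_{X|D_{\un}}$ therefore returns exactly the first, which is the asserted equality $\Fil^sW_m\Omega^r_X\cap W_m\Omega^r_{X|D_{\un}}=\Fil^sW_m\Omega^r_{X|D_{\un}}$.

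For the kernel computation I reduce to the absolute case. Since multiplication by $p^s$ commutes with the injective inclusion $W_m\Omega^r_{X|D_{\un}}\hookrightarrow W_m\Omega^r_X$, its kernel on the relative complex equals $W_m\Omega^r_{X|D_{\un}}\cap\Ker(p^s\colon W_m\Omega^r_X\to W_m\Omega^r_X)$. By Illusie's description of the $p$-torsion, this absolute kernel is $\Fil^{m-s}W_m\Omega^r_X$, and applying the first identity with $s$ replaced by $m-s$ rewrites the intersection as $\Fil^{m-s}W_m\Omega^r_{X|D_{\un}}$, as desired. The injectivity of $\ul{p^s}$ is then formal. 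Unwinding the definition of the filtration gives $\Fil^{m-s}W_m\Omega^r_{X|D_{\un}}=\Ker(R^s\colon W_m\Omega^r_{X|D_{\un}}\to W_{m-s}\Omega^r_{X|D_{\un}})$, and the restriction $R\colon W_m\Omega^r_{X|D_{\un}}\to W_{m-1}\Omega^r_{X|D_{\un}}$ is surjective: locally $W_m\Omega^r_{X|D_{\un}}=[f]_m\cdot W_m\Omega^r_X(\log D)$ for a local equation $f$ of $D_{\un}$, and $R[f]_m=[f]_{m-1}$, so surjectivity descends from that of $R$ on the log de Rham--Witt complex \cite{hyodokato}. Thus any $\eta\in W_{m-s}\Omega^r_{X|D_{\un}}$ lifts to some $\tilde\eta$ with $R^s\tilde\eta=\eta$, and $\ul{p^s}(\eta):=p^s\tilde\eta$ is well defined because $\Ker p^s=\Ker R^s$; the same equality shows $\ul{p^s}(\eta)=0$ forces $\tilde\eta\in\Ker R^s$, i.e. $\eta=0$.

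I expect no deep obstacle: the whole argument is formal once the inclusions and $R$-compatibilities are in place, the only genuinely external inputs being Illusie's $p$-torsion computation for the absolute complex and the surjectivity of $R$ on the log complex. The point requiring the most care is that both the kernel computation and the injectivity rest entirely on the first identity, so I treat that equality first and extract it cleanly from the kernel description of $\Fil$ together with the injectivity of the transition maps $W_s\Omega^r_{X|D_{\un}}\hookrightarrow W_s\Omega^r_X$.
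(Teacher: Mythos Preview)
Your proposal is correct and follows essentially the same approach as the paper: the first identity is exactly the diagram chase encoded in the paper's commutative square with injective vertical maps, and the second identity is reduced in the same way to Illusie's computation $\Ker(p^s)=\Fil^{m-s}W_m\Omega^r_X$ \cite[Prop.~3.4]{illusiederham}. The only difference is that you spell out the surjectivity of $R$ on the filtered complex and the resulting well-definedness of $\ul{p^s}$, whereas the paper simply records the exactness of the top row and treats the injectivity of $\ul{p^s}$ as an immediate consequence.
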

	\begin{proof}
		
		The first equality follows from the commutative diagram
		\[ \xymatrix{ 
			0\ar[r]& \Fil^s W_m\Omega^r_{X|D_{\un}} \ar[r] \ar@{^(->}[d]&W_m\Omega^r_{X|D_{\un}} \ar[r]^-{R^{m-s}} \ar@{^(->}[d]& W_s\Omega^r_{X|D_{\un}} \ar[r] \ar@{^(->}[d] & 0\\
			0 \ar[r] &\Fil^s W_m\Omega^r_{X} \ar[r] & W_m\Omega^r_{X} \ar[r]^-{R^{m-s}} &W_s\Omega^r_{X} \ar[r]&0 \;.
		}\]
		
		The second equality follows from the first and the fact (cf. \cite[Prop. 3.4]{illusiederham})
		\[ \Ker(p^s: \DRW{m}{r}{X} \to \DRW{m}{r}{X}  ) =\Fil^{m-s}\DRW{m}{r}{X}.\]
		
	\end{proof}

	Recall (cf. the proof of \cite[I, Prop. 3.11, Page 575]{illusiederham}) 
	\begin{align}
	\Ker(F^{m-1}\colon \DRW{m}{r}{X}\to \Omega_{X}^r)=V\DRW{m-1}{r}{X}, \\
	\Ker(F^{m-1}d\colon \DRW{m}{r}{X}\to \Omega_{X}^{r+1})=F\DRW{m+1}{r}{X}. \label{imageF} 
	\end{align}
	We have the following analogues for the filtered de Rham-Witt sheaves.

	\begin{proposition}\label{FundV}
		For $\un \in \bN^{\Lambda}$, we have 
		\begin{itemize}
			\item[(i)]$ \Ker(F^{m-1}\colon \FDRW{m}{r}{\un}\to \Omega_{X|D_{p^{m-1}\un}}^r)=V\FDRW{m-1}{r}{p\un}, $
			i.e., \[ VW_{m-1}\Omega^r_X\cap \FDRW{m}{r}{\un}=V\FDRW{m-1}{r}{p\un};\]
			\item[(ii)] $\Ker(F^{m-1}d\colon \FDRW{m}{r}{p\un}\to \Omega_{X|D_{p^{m}\un}}^{r+1})=F\FDRW{m+1}{r}{\un}$, i.e.,\[ F\DRW{m+1}{r}{X}\cap \FDRW{m}{r}{p\un}= F\FDRW{m+1}{r}{\un}.\]
		\end{itemize}
	\end{proposition}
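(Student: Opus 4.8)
The plan is to treat (i) and (ii) in parallel: in each case the inclusion $\supseteq$ is formal, and the reverse inclusion is proved étale-locally by reducing the relative statement to the corresponding kernel computation for the Hyodo--Kato complex $\DRW{\bullet}{\ast}{X}(\log D)$.

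For $\supseteq$ I would argue as follows. In (i), if $\omega\in\FDRW{m-1}{r}{p\un}$ then $V\omega\in\FDRW{m}{r}{\un}$ by Lemma \ref{FVR.on.filtration}(ii), while $F^{m-1}V\omega=0$ already inside $\DRW{m}{r}{X}$ by Illusie's identity $\Ker(F^{m-1}\colon\DRW{m}{r}{X}\to\DR{r}{X})=V\DRW{m-1}{r}{X}$; since the target $\DR{r}{X|D_{p^{m-1}\un}}$ injects into $\DR{r}{X}$ (as $\un\in\bN^\Lambda$), this places $V\omega$ in $\Ker F^{m-1}$. In (ii), if $\omega\in\FDRW{m+1}{r}{\un}$ then $F\omega\in\FDRW{m}{r}{p\un}$ by Lemma \ref{FVR.on.filtration}(i) and $F^{m-1}dF\omega=0$ by \eqref{imageF}. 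The two "i.e."\ reformulations are then just the translations of these kernels through the same two Illusie identities, using once more that the coherent targets embed into $\DR{r}{X}$ and $\DR{r+1}{X}$.

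For the reverse inclusions I would pass to a strict henselization of a local ring of $X$ with regular parameters making $t:=\prod_{\lambda}T_\lambda^{n_\lambda}$ a local equation of $D_{\un}$, so that $W_m\cO_X(-D_{\un})=W_m\cO_X\cdot[t]_m$ and $\FDRW{m}{r}{\un}=[t]_m\cdot\DRW{m}{r}{X}(\log D)$. For (i), write $x=[t]_m\xi$ with $\xi\in\DRW{m}{r}{X}(\log D)$ and $F^{m-1}x=0$; since $F$ is a ring homomorphism and $F^{m-1}[t]_m=t^{p^{m-1}}$, we get $F^{m-1}x=t^{p^{m-1}}F^{m-1}\xi$, and as $\DR{r}{X}(\log D)$ is locally free and $t^{p^{m-1}}$ a nonzerodivisor this forces $F^{m-1}\xi=0$. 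The logarithmic analogue $\Ker(F^{m-1}\colon\DRW{m}{r}{X}(\log D)\to\DR{r}{X}(\log D))=V\DRW{m-1}{r}{X}(\log D)$ then yields $\xi=V\xi'$, whence
\[ x=[t]_m V\xi'=V\bigl(F[t]_m\cdot\xi'\bigr)=V\bigl([t^p]_{m-1}\xi'\bigr)\in V\,\FDRW{m-1}{r}{p\un}, \]
because $t^p$ is a local equation of $D_{p\un}$ and $F[t]_m=[t^p]_{m-1}$. For (ii) the same pattern applies with $t^p$ in place of $t$: writing $x=[t^p]_m\zeta$ and using $d[t^p]_m=[t^p]_m\,d\log[t^p]_m$ together with $F^{m-1}d\log[t^p]_m=\sum_\lambda pn_\lambda\,d\log T_\lambda=0$ in $\DR{1}{X}(\log D)$, one finds $F^{m-1}dx=(t^p)^{p^{m-1}}F^{m-1}d\zeta$, hence $F^{m-1}d\zeta=0$; the logarithmic analogue $\Ker(F^{m-1}d)=F\DRW{m+1}{r}{X}(\log D)$ gives $\zeta=F\zeta'$, and then $x=[t^p]_mF\zeta'=F\bigl([t]_{m+1}\zeta'\bigr)\in F\,\FDRW{m+1}{r}{\un}$.

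The main obstacle is exactly the two logarithmic kernel identities for $\DRW{\bullet}{\ast}{X}(\log D)$ used above, i.e.\ that $\Image(V)=\Ker(F^{m-1})$ and $\Image(F)=\Ker(F^{m-1}d)$ hold verbatim as in Illusie's non-logarithmic case. I expect these to be available from the structural description of the Hyodo--Kato complex in \cite{hyodokato} and \cite{lorenzon}, or to follow by repeating Illusie's proof in the log setting; granting them, the remaining bookkeeping (the identities $F[t]_m=[t^p]_{m-1}$ and $x\cdot Vy=V(Fx\cdot y)$, the vanishing of $F^{m-1}d\log[t^p]_m$ in characteristic $p$, and the local freeness of $\DR{r}{X}(\log D)$) is routine.
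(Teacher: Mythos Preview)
Your argument is correct and takes a genuinely different route from the paper. The paper does not reduce to the log de Rham--Witt complex $\DRW{m}{r}{X}(\log D)$; instead it reruns Illusie's inductive argument directly in the filtered setting, using the description $\Fil^s\FDRW{m}{r}{\un}=V^s\FDRW{m-s}{r}{p^s\un}+dV^s\FDRW{m-s}{r-1}{p^s\un}$ from Theorem~\ref{standardfiltration} together with the relative Cartier isomorphism of Lemma~\ref{lem-wniCartier}. Concretely, for (i) it shows by induction on $s$ that
\[
(\Ker F^{m-1})\cap\bigl(V\FDRW{m-1}{r}{p\un}+\Fil^s\FDRW{m}{r}{\un}\bigr)\subset V\FDRW{m-1}{r}{p\un}+\Fil^{s+1}\FDRW{m}{r}{\un},
\]
and similarly for (ii) with $\Ker(F^{m-1}d)$ and the target $F\FDRW{m+1}{r}{\un}+\Fil^{s+1}\FDRW{m}{r}{p\un}$.

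Your approach is cleaner: since $W_m\cO_X(-D_{\un})$ is invertible with local generator $[t]_m$, factoring it out reduces both statements to the corresponding kernel identities for $\DRW{m}{r}{X}(\log D)$, and the remaining Teichm\"uller manipulations are indeed routine (for (ii) one can even bypass $d\log$ and use $d[t^p]_m=p[t]_m^{p-1}d[t]_m$ directly to get $F^{m-1}d[t^p]_m=0$). The trade-off is exactly the one you flag: you need the log analogues of \cite[I, Prop.~3.11]{illusiederham} as input. These do hold---Illusie's proof goes through verbatim once one has the canonical-filtration description from \cite[Prop.~1.16]{lorenzon} and the log Cartier isomorphism---but they are not stated explicitly in the cited references, so strictly speaking you would have to supply that argument. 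The paper's choice avoids this detour by building the Cartier lifting step directly into the induction on the filtered complex, at the cost of a longer and more hands-on proof; what it buys is self-containment, while what your approach buys is a transparent separation of the ``log'' content from the ``twist by $D_{\un}$'' content.
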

	\begin{proof}
		This is proved by the same argument as the proof of \cite[I, Prop. 3.11]{illusiederham}, which recall below. 
		
		(i)  For $m=1$ it is trivial. For $m>1$ we have
		\[ \Ker F^{m-1} \subset \Ker p^{m-1}= \Fil^1\FDRW{m}{r}{\un}=V\FDRW{m-1}{r}{p\un}+dV\FDRW{m-1}{r-1}{p\un}.\] by Theorem \ref{standardfiltration} and Lemma \ref{kernelps}. It suffices to show that, for $1\leq s \leq m$, 
		\begin{equation}\label{kerFind}
		(\Ker F^{m-1}) \cap (V\FDRW{m-1}{r}{p\un}+\Fil^s\FDRW{m}{r}{\un})  \subset  V\FDRW{m-1}{r}{p\un}+\Fil^{s+1}\FDRW{m}{r}{\un}.
		\end{equation} 
		Let 
		$z=Vx+dV^sy$ with $x\in \FDRW{m-1}{r}{p\un}$, $y \in \FDRW{m-s}{r-1}{p^s\un}$ be such that $F^{m-1}z=0$. 
		Noting $F^{m-1}Vx=pF^{m-2}x=0$ and $F^{m-1}dV^s=F^{m-1-s}d$,
		it follows that $F^{m-1-s}dy=0$. Let $\overline{y}$ be the image of $y$ in $\Omega_{X|D_{p^s\un}}^{r-1}$ under the restriction map $R^{m-1-s}$. 
		Then, by \cite[I, Prop. 3.3]{illusiederham}, we get
		$C^{-(m-1-s)}d\overline{y}=0$ and $d\overline{y}=0$ in $\Omega_{X|D_{p^s\un}}^{r}$.
		By Lemma \ref{lem-wniCartier}
		there exists (locally) $y'\in \Omega_{X|D_{p^{s-1}\un}}^{r-1} $ such that 
		$\overline{y}=C^{-1}(y')$. We can then take a lift $\tilde{y}$ of $y'$ in 
		$\FDRW{m+1-s}{r-1}{p^{s-1}\un}$. 
		Indeed, writing $y'=\sum_{\alpha} a_{\alpha}\omega_\alpha$ with
		$a_{\alpha}\in \cO_X(-D_{p^{s-1}\un})$ and $\omega_\alpha\in \Omega^{r-1}_X(\log D)$,
		we take
		$\tilde{y}=\sum_{\alpha}[a_{\alpha}]_{m+1-s}\tilde{\omega}_\alpha$, where
		$\tilde{\omega}_\alpha\in W_{m+1-s}\Omega^{r-1}_X(\log D)$ is a lift of $\omega_\alpha$. By the construction we have
		\[ y=F\tilde{y} \;\;\mathrm{mod} \; \Fil^1\FDRW{m-s}{r-1}{p^s\un}.\]
		By taking $V^s$ on both sides, we get
		\[ V^sy=V^sF\tilde{y} \;\;\mathrm{mod} \; \Fil^{s+1}\FDRW{m}{r-1}{\un}.\]
		Hence \[ dV^sy=dV^sF\tilde{y}=pdV^{s-1}\tilde{y}=VdV^{s-2}\tilde{y} \; \; \mathrm{mod}\; \Fil^{s+1}\FDRW{m}{r-1}{\un}.\]
		That is \[ dV^sy\in V\FDRW{m-1}{r}{p\un}+\Fil^{s+1}\FDRW{m}{r}{\un}.\]
		Hence $z=Vx+dV^sy\in V\FDRW{m-1}{r}{p\un}+\Fil^{s+1}\FDRW{m}{r}{\un}$, which proves \eqref{kerFind}.
		
		(ii) It suffices to prove that, for $1\leq s\leq m$,
		\begin{equation}\label{kerFdind}
		\Ker (F^{m-1}d) \cap \Fil^s\FDRW{m}{r}{p\un}  \subset  F\FDRW{m+1}{r}{\un}+\Fil^{s+1}\FDRW{m}{r}{p\un}.
		\end{equation} 
		Let $z=V^sx+dV^sy$ with $x \in \FDRW{m-s}{r}{p^{s+1}\un}$, 
		$y \in \FDRW{m-s}{r-1}{p^{s+1}\un} $ be such that $F^{m-1}dz=0$.
		Noting $F^{m-1}dV^s=F^{m-1-s}d$, it follows that $F^{m-1-s}dx=0$. Let $\overline{x}$ be the image of $x$ in $\Omega_{X|D_{p^{s+1}\un}}^r$. As in (i), there exist $\tilde{x}\in \FDRW{m-s+1}{r}{p^sn} $, such that 
		\[ x=F\tilde{x} \;\; \mathrm{mod}\; \Fil^1\FDRW{m-s}{r}{p^{s+1}\un}. \]
		By taking $V^s$ on both sides, we obtain
		\[ V^sx = FV^s\tilde{x} \;\; \mathrm{mod}\; \Fil^{s+1}\FDRW{m}{r}{pn}.  \]
		Noting that $dV^sy=FdV^{s+1}y \in F\FDRW{m+1}{r}{\un}$, we obtain the inclusion (\ref{kerFdind}).
	\end{proof}
	
	\begin{corollary}\label{px.implies.x}
		For $\un \in \bN^{\Lambda}$ and $x\in \DRW{m-1}{r}{X}$, 
		$\ul{p}\cdot x \in \FDRW{m}{r}{\un}$ (cf. \eqref{ulp^s}) implies $x\in \FDRW{m-1}{r}{\un}$.
	\end{corollary}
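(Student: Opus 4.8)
The plan is to reduce the assertion to a single \emph{$p$-saturation} property of $\FDRW{m}{r}{\un}$ inside the ambient complex $\DRW{m}{r}{X}$, namely that
\begin{equation*}
p\,\DRW{m}{r}{X}\cap \FDRW{m}{r}{\un}=p\,\FDRW{m}{r}{\un}. \tag{$\ast$}
\end{equation*}
Granting $(\ast)$, here is how I would finish. Choose a lift $\tilde x\in\DRW{m}{r}{X}$ of $x$ under the restriction $R\colon\DRW{m}{r}{X}\to\DRW{m-1}{r}{X}$, so that $\ul p\cdot x=p\tilde x$ by the definition of $\ul p$ (cf. \eqref{ulp^s}). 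The hypothesis gives $p\tilde x\in\FDRW{m}{r}{\un}$, hence $p\tilde x\in p\,\DRW{m}{r}{X}\cap\FDRW{m}{r}{\un}=p\,\FDRW{m}{r}{\un}$ by $(\ast)$; write $p\tilde x=p\,y$ with $y\in\FDRW{m}{r}{\un}$. Then $p(\tilde x-y)=0$, and since $\Ker\big(p\colon\DRW{m}{r}{X}\to\DRW{m}{r}{X}\big)=\Fil^{m-1}\DRW{m}{r}{X}$ (\cite[Prop. 3.4]{illusiederham}), which equals $\Ker\big(R\colon\DRW{m}{r}{X}\to\DRW{m-1}{r}{X}\big)$ by the very definition of the canonical filtration, applying $R$ yields $x=R\tilde x=Ry$. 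As $y\in\FDRW{m}{r}{\un}$ and $R$ preserves the relative complex by Lemma \ref{FVR.on.filtration}(iii), one concludes $x=Ry\in\FDRW{m-1}{r}{\un}$, as desired.

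It remains to establish $(\ast)$, the inclusion $\supseteq$ being trivial. Here I would play off the two parts of Proposition \ref{FundV} against each other via the factorizations $p=VF=FV$. Let $p\omega\in\FDRW{m}{r}{\un}$ with $\omega\in\DRW{m}{r}{X}$. Writing $p\omega=V(F\omega)$ with $F\omega\in\DRW{m-1}{r}{X}$ gives
\begin{equation*}
p\omega\in V\DRW{m-1}{r}{X}\cap\FDRW{m}{r}{\un}=V\FDRW{m-1}{r}{p\un},
\end{equation*}
the equality being exactly Proposition \ref{FundV}(i). Using that the Verschiebung $V$ is injective on the de Rham--Witt complex, this forces $F\omega\in\FDRW{m-1}{r}{p\un}$. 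Since also $F\omega\in F\DRW{m}{r}{X}$, I then invoke Proposition \ref{FundV}(ii) with $m$ replaced by $m-1$, namely
\begin{equation*}
F\DRW{m}{r}{X}\cap\FDRW{m-1}{r}{p\un}=F\FDRW{m}{r}{\un},
\end{equation*}
to obtain $F\omega=Fv$ for some $v\in\FDRW{m}{r}{\un}$. Therefore $p\omega=V(F\omega)=V(Fv)=p\,v\in p\,\FDRW{m}{r}{\un}$, which is $(\ast)$.

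The genuine content sits in $(\ast)$, and the delicate point is the bookkeeping of the divisor twist $\un\leftrightarrow p\un$: Proposition \ref{FundV}(i) pushes the problem from $\un$ up to $p\un$ across $V$, while Proposition \ref{FundV}(ii) brings it back from $p\un$ to $\un$ across $F$, and it is the interlocking of these two shifts that makes $p\omega$ land in $p\,\FDRW{m}{r}{\un}$ with \emph{exactly} the divisor $\un$ rather than a coarser one. The step I expect to be the main obstacle is the passage from $V(F\omega)\in V\FDRW{m-1}{r}{p\un}$ to $F\omega\in\FDRW{m-1}{r}{p\un}$: this rests squarely on the injectivity of $V$ on the (logarithmic) de Rham--Witt complex, so I would make sure to pin down and cite this fact from \cite{illusiederham} (respectively \cite{lorenzon} in the log setting) and to verify that it is compatible with the invertible twist $W_m\cO_X(-D_{\un})$ entering the definition of $\FDRW{m}{r}{\un}$.
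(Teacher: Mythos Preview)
Your overall strategy is the same as the paper's: lift $x$ to $\tilde x$ in level $m$, use Proposition~\ref{FundV}(i) to pass through $V$, then Proposition~\ref{FundV}(ii) to pass back through $F$, and finally descend via $R$. The reformulation through the $p$-saturation property $(\ast)$ is a clean packaging, and in fact slightly streamlines the endgame (you never need to analyze $\Ker(F)$, since from $F\omega=Fv$ you can simply apply $V$ to both sides and recover $p\omega=pv$ directly).

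However, there is a genuine gap. The Verschiebung $V\colon W_{m-1}\Omega^r_X\to W_m\Omega^r_X$ is \emph{not} injective for $r\ge 1$. One has (cf.\ \cite[I, 3.21.1.4]{illusiederham})
\[
\Ker\big(V\colon W_{m-1}\Omega^r_X\to W_m\Omega^r_X\big)=FdV^{m-1}\Omega^{r-1}_X=dV^{m-2}\Omega^{r-1}_X,
\]
which is nonzero already for $m=2$, where it equals $d\Omega^{r-1}_X$. (Concretely, the relation $Vd=p\,dV$ forces $V(dz)=p\,d(Vz)$, and $d(Vz)\in\Fil^{m-1}W_m\Omega^r_X=\Ker(p)$.) So the step ``injectivity of $V$ forces $F\omega\in\FDRW{m-1}{r}{p\un}$'' fails as stated, and you will not find the injectivity you hope to cite in \cite{illusiederham} or \cite{lorenzon}.

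The repair is exactly what the paper does: from $V(F\omega)=V(\alpha)$ with $\alpha\in\FDRW{m-1}{r}{p\un}$ one only gets $F\omega-\alpha=FdV^{m-1}z'$ for some $z'\in\Omega^{r-1}_X$, i.e.\ $F(\omega-dV^{m-1}z')=\alpha\in\FDRW{m-1}{r}{p\un}$. Now Proposition~\ref{FundV}(ii) applies to $\omega-dV^{m-1}z'$ and gives $F(\omega-dV^{m-1}z')=Fv$ with $v\in\FDRW{m}{r}{\un}$. Since $p\,dV^{m-1}z'=0$, you still obtain $p\omega=VF\omega=VF(\omega-dV^{m-1}z')=VFv=pv\in p\,\FDRW{m}{r}{\un}$, and $(\ast)$ follows. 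With this correction your argument is complete and essentially coincides with the paper's proof.
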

	\begin{proof}
		Recall we have the following diagram \cite[Prop. 3.4]{illusiederham}:
		\[ \xymatrix{
			\DRW{m}{r}{X} \ar[r]^-{p} \ar[d]^R &  \DRW{m}{r}{X}\\
			\DRW{m-1}{r}{X} \ar@{^(->}[ur]_-{\ul{p}}. &
		}\]
		Hence there exists $\tilde{x} \in \DRW{m}{r}{X}$ such that $p\tilde{x}=\ul{p}\cdot x$ and $R\tilde{x}=x$. By the assumption, we have $VF\tilde{x}=p\tilde{x}=\ul{p}\cdot x\in \FDRW{m}{r}{\un}$. Thanks to Corollary \ref{FundV}(i), it follows that there exists $y'\in \FDRW{m-1}{r}{p\un}$ such that \[ VF\tilde{x}=Vy'. \]
		Recall the identity in \cite[I. 3.21.1.4]{illusiederham}:
		\begin{equation*}
		\Ker(V: \DRW{m-1}{r}{X}\to \DRW{m}{r}{X})=FdV^{m-1}\Omega_X^{r-1}.
		\end{equation*}
		Therefore there exists $z'\in \Omega_X^{r-1}$ such that $F\tilde{x}-y'=FdV^{m-1}z'$. That is 
		\begin{equation*}
		F(\tilde{x}-dV^{m-1}z')=y' \in \FDRW{m-1}{r}{p\un}.
		\end{equation*}
		Corollary \ref{FundV}(ii) implies that there exists $y'' \in \FDRW{m}{r}{\un}$ such that 
		\begin{equation*}
		F(\tilde{x}-dV^{m-1}z')=Fy''.
		\end{equation*} 
		Thanks to the identity \cite[I. 3.21.1.2]{illusiederham}:
		\begin{equation*}
		\Ker(F:\DRW{m}{r}{X} \to \DRW{m-1}{r}{X} )=V^{m-1}\Omega_X^r,
		\end{equation*} 
		we find $z''\in \Omega_X^r$ such that 
		\begin{equation*}
		\tilde{x}-y''=dV^{m-1}z'+V^{m-1}z''.
		\end{equation*}
		Noting that $\Ker(R: \DRW{m}{r}{X} \to \DRW{m-1}{r}{X})=V^{m-1}\Omega_X^r+dV^{m-1}\Omega_X^{r-1}$, we get 
		\begin{equation*}
		x=R\tilde{x}=Ry'' \in \FDRW{m-1}{r}{\un}. 
		\end{equation*} 
		
	\end{proof}

	\subsection{Logarithmic part of filtered de Rham-Witt complexes}\label{defn.filterdRW}

	The relation between the filtered de Rham-Witt sheaves and the relative logarithmic de Rham-Witt sheaves is given by the following theorem, which is a generalization of Theorem \ref{logtocoherent}.
	
	We first introduce some notations. Let $$\Sigma:=\{ \D n \;|\; \un=\nlam \in \bN^{\Lambda}  \}$$
	be the set of effective divisors with supports in $X-U$, whose irreducible components are same as $D$'s. The semi-order on $\Z^{\Lambda}$ defined in (\ref{semi.order}) induces a semi-order on $\Sigma$: 
	\begin{equation*}
	\D n \geq \D {n'}    \; \text{if} \; \underline n \geq \underline{n'}. 
	\end{equation*}
	
	For $D_1, D_2 \in \Sigma$ with $D_1\geq D_2$, we have a natural injective map $\DRWlog m r {X|D_1} \hookrightarrow \DRWlog m r {X|D_2} $ (see Corollary \ref{transition.injective}), which gives a pro-system of sheaves
	\[\lq\lq \varprojlim\limits_{D \in \Sigma}" W_m\Omega_{X|D, \log}^{r}.\]
	In order to simplify the notation, we simply write it as $\lq\lq \varprojlim\limits_{D}" W_m\Omega_{X|D, \log}^{r}$.
	
	\begin{theorem}\label{exactproobj}
		We have the following exact sequence of pro-sheaves
		\[  0 \to \lq\lq \varprojlim_D" \DRWlog{m}{r}{X|D} \to \lq\lq \varprojlim_D" \DRW{m}{r}{X|D} \xrightarrow{1-F} \lq\lq \varprojlim_D" \DRW{m}{r}{X|D}/dV^{m-1}\Omega^{r-1}_{X|p^{m-1}D} \to 0,\]
		where $D$ runs over the set $\Sigma$.
	\end{theorem}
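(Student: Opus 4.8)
The plan is to split the assertion into surjectivity of $1-F$ as a map of pro-sheaves and the computation of its kernel, the latter being the subsheaf generated by $d\log$-symbols. The first map in the sequence is the tautological inclusion, injective by the very definition of $\DRWlog m r {X|D}$ as a subsheaf (Corollary \ref{transition.injective}), and the inclusion $\lq\lq\varprojlim_D"\DRWlog m r {X|D}\subseteq\Ker(1-F)$ is the easy half of the kernel computation: the generating sections $d\log[x_1]_m\wedge\cdots\wedge d\log[x_r]_m$ are $F$-invariant, exactly as at level $m=1$ the logarithmic forms are the $C^{-1}$-fixed ones in Theorem \ref{logtocoherent}. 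Both remaining points are proved by induction on $m$. The base case $m=1$ is literally Theorem \ref{logtocoherent}: there $V^{m-1}=\mathrm{id}$ and $p^{m-1}D=D$, so the sequence specializes to the one of that theorem, and since it is exact as genuine \'etale sheaves for each fixed $D\in\Sigma$, the associated sequence of pro-sheaves is strictly exact.

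For the inductive step I would compare level $m$ with levels $m-1$ and $1$ through a $3\times3$ diagram of pro-sheaves with exact rows. The top row is the pro-version of the Kummer-type sequence of Theorem \ref{Kummerlogform},
\[ 0 \to \lq\lq\varprojlim_D"\DRWlog{m-1}{r}{X|[D/p]} \xrightarrow{\ul p} \lq\lq\varprojlim_D"\DRWlog{m}{r}{X|D} \xrightarrow{R^{m-1}} \lq\lq\varprojlim_D"\DRWlog{1}{r}{X|D} \to 0. \]
Cofinality is essential here: as $D$ runs over $\Sigma$ so do $[D/p]$, $p\,D$ and $p^{m-1}D$, so reindexing by any of these operations produces an isomorphic pro-object, and all the index shifts occurring below (the passages $\un\mapsto p\un\mapsto p^s\un\mapsto p^{m-1}\un$) become harmless in the limit. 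The middle and bottom rows should be analogous three-term sequences connecting the same levels, with terms $\lq\lq\varprojlim_D"\DRW m r {X|D}$ and $\lq\lq\varprojlim_D"\DRW m r {X|D}/dV^{m-1}\Omega^{r-1}_{X|p^{m-1}D}$ respectively, and vertical maps $1-F$. Applying the snake lemma to the lower two rows then yields surjectivity of $1-F$ at level $m$ together with a short exact sequence $0\to\Ker(1-F)_{m-1}\to\Ker(1-F)_m\to\Ker(1-F)_1\to 0$; feeding the top row into the five lemma identifies $\Ker(1-F)_m$ with $\lq\lq\varprojlim_D"\DRWlog m r {X|D}$.

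The main obstacle is the construction of the middle and bottom rows and the verification that $1-F$ is compatible with them, for one cannot simply imitate the top row. The naive candidate $0\to\DRW{m-1}{r}{X|D}\xrightarrow{\ul p}\DRW m r {X|D}\xrightarrow{R^{m-1}}\DRW 1 r {X|D}\to 0$ is \emph{not} exact in the middle: by Lemma \ref{kernelps} the image of $\ul p$ is $p\,\DRW m r {X|D}=VF\,\DRW m r {X|D}$, which is in general strictly smaller than $\Ker R^{m-1}=\Fil^1\DRW m r {X|D}$ as described in Theorem \ref{standardfiltration}. The correct rows must instead be assembled from the Frobenius--Verschiebung exact sequences of Proposition \ref{FundV}, which compute $\Ker F^{m-1}$ and $\Ker F^{m-1}d$ on the filtered complexes, combined with the saturation statement of Corollary \ref{px.implies.x} (that $\ul p\,x$ relative forces $x$ relative), which is precisely what guarantees the strictness needed for exactness to persist in the pro-limit.

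Once these rows are in place, surjectivity of $1-F$ at level $m$ is formal from the snake lemma and the inductive surjectivity at the lower levels. The genuinely delicate input, and the reason Proposition \ref{FundV} and Corollary \ref{px.implies.x} are stated just beforehand, is the exactness of the middle row at its central term, which encodes the reverse inclusion $\Ker(1-F)\subseteq\lq\lq\varprojlim_D"\DRWlog m r {X|D}$: starting from $\omega$ with $F\omega\equiv\omega$ one must propagate the level-$1$ logarithmic characterization upward through successive lifts along $F$ and $V$ while remaining inside the relative part twisted by $-D$. I expect this step, rather than the surjectivity or the formal diagram chase, to carry the entire analytic weight of the proof.
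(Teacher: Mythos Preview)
Your overall strategy---induction on $m$ with base case Theorem~\ref{logtocoherent}---matches the paper's, and you correctly single out Theorem~\ref{Kummerlogform} and Corollary~\ref{px.implies.x} as the key inputs. But the execution diverges from the paper in two places, and in the first there is a real gap.

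\textbf{The inclusion is not tautological.} You call the first map the ``tautological inclusion''. It is not: $\DRWlog m r {X|D}$ and $\DRW m r {X|D}$ are defined inside $j_*W_m\Omega^r_U$ by unrelated recipes (symbols versus $W_m\cO_X(-D)\cdot W_m\Omega^r_X(\log D)$), and one must actually prove $\DRWlog m r {X|D_{p^{m-1}\un}}\subset \DRW m r {X|D_{\un}}$. The paper does this via an explicit Witt-vector identity (Lemma~\ref{lem;giesserhe}): expanding $[1+t^{p^{m-1}n}a]_m-[1]_m$ componentwise one finds $d\log[1+t^{p^{m-1}n}a]_m\in W_m\Omega^1_{X|D_{\un}}$. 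The shift by $p^{m-1}$ is essential and is only absorbed afterwards by cofinality.

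\textbf{The $3\times 3$ diagram does not assemble.} You recognise that the na\"{\i}ve middle row $0\to\DRW{m-1}r{X|D}\xrightarrow{\ul p}\DRW m r{X|D}\xrightarrow{R^{m-1}}\DRW 1 r{X|D}\to 0$ fails, and propose to repair it with Proposition~\ref{FundV}. But that proposition produces sequences whose maps are $V$ and $F^{m-1}$, not $\ul p$ and $R^{m-1}$; these do not sit in a square commuting with the top row and with the vertical $1-F$. The paper never builds such rows.

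For \emph{surjectivity} the paper argues directly, without induction on $m$. The identity $dx=(F-1)(dVx+\cdots+dV^{m-1}x)$ shows $dW_m\Omega^{r-1}_{X|D_{\un}}\subset(1-F)W_m\Omega^r_{X|D_{[\un/p^m]}}$, so it suffices to hit the generators $V^i[x]_{m-i}\,d\log[y_1]_m\cdots d\log[y_r]_m$ of $\DRW m r{X|D_{\un}}/dW_m\Omega^{r-1}_{X|D_{\un}}$. One then solves $y^p-y=x$ \'etale locally with $y$ keeping the required divisibility, exactly the mechanism behind Claim~\ref{multiplicity}.

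For \emph{exactness in the middle} the paper does induct on $m$, but again without the $3\times 3$ scaffold. It proves directly that $\DRW m r{X|D_{\un}}\cap\DRWlog m r X=\DRWlog m r{X|D_{p^{m-1}\un}}$: if $x$ lies in the left side then $Rx=Fx$ (logarithmic forms satisfy $F=R$) lies in $\DRW{m-1}r{X|D_{p\un}}\cap\DRWlog{m-1}r X$, hence by induction in $\DRWlog{m-1}r{X|D_{p^{m-1}\un}}$. Lift $Rx$ along $R$ inside the relative log sheaf to some $y$; then $x-y=\ul p^{\,m-1}z$ with $z\in\DRlog r X$ by the absolute Kummer sequence, and Corollary~\ref{px.implies.x} forces $z\in\Omega^r_{X|D_{\un}}$, whence $z\in\DRlog r{X|D_{\un}}$ by the case $m=1$. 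This is the element chase you have in mind, but it requires only the top row of your diagram together with Corollary~\ref{px.implies.x}---not the problematic middle and bottom rows.
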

	

	
	We need the following lemma, which follows from easy calculations with Witt vectors.
	\begin{lemma}\label{lem;giesserhe} (\cite[Lem. 1.2.3]{geisserhe})
		Let $R$ be any ring, and $t\in R$, then $ [1+t]_{m}-[1]_m=(y_0,\cdots, y_{m-1})$ with $y_i\equiv t$ mod $ t^2R$ for $0 \leq i \leq m-1.$ Here $[x]_m=(x,0,\cdots,0) \in W_m(R)$ is the Teichm\"uller representative of $x\in R$.
	\end{lemma}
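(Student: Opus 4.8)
The plan is to reduce the statement to a universal computation over a $p$-torsion free ring, where the Witt components can be read off from the ghost coordinates, and then specialize. First I would observe that both Witt-vector addition and the additive inverse are given by universal polynomials with coefficients in $\Z$, so each component $y_i$ of $[1+t]_m-[1]_m$ is the value at $t$ of a fixed polynomial $Y_i \in \Z[t]$ that does not depend on $R$. It therefore suffices to prove the congruence $Y_i \equiv t \pmod{t^2\Z[t]}$ in the universal ring $\Z[t]$; applying the ring homomorphism $\Z[t]\to R$, $t\mapsto t$, then sends $t^2\Z[t]$ into $t^2R$ and yields the claim $y_i\equiv t \pmod{t^2R}$ for an arbitrary ring $R$.

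Over $\Z[t]$, which is $p$-torsion free, the ghost map $(a_0,\dots,a_{m-1})\mapsto (w_0,\dots,w_{m-1})$ with $w_i=\sum_{j=0}^{i}p^j a_j^{p^{i-j}}$ is injective and additive. Since the ghost coordinates of a Teichm\"uller lift satisfy $w_i([x]_m)=x^{p^i}$, additivity gives the ghost coordinates of $[1+t]_m-[1]_m$ as $w_i=(1+t)^{p^i}-1$ for $0\le i\le m-1$. Evaluating the whole Witt vector at $t=0$ gives $[1]_m-[1]_m=0$, so every $Y_i$ has zero constant term, i.e. $Y_i\in t\Z[t]$.

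To extract the congruence I would work modulo $t^2$. For $j<i$ one has $p^{i-j}\ge p\ge 2$, so $Y_j^{p^{i-j}}\in t^{p^{i-j}}\Z[t]\subseteq t^2\Z[t]$ because $Y_j\in t\Z[t]$; hence in the ghost relation $w_i=\sum_{j=0}^{i}p^j Y_j^{p^{i-j}}$ only the top term $j=i$ survives modulo $t^2$, giving $p^iY_i\equiv w_i \pmod{t^2}$. On the other hand $(1+t)^{p^i}-1\equiv p^i t \pmod{t^2}$, so $p^i(Y_i-t)\in t^2\Z[t]$. Comparing the coefficients of $t^0$ and $t^1$ and using that $\Z$ is torsion free, we conclude $Y_i-t\in t^2\Z[t]$, that is $Y_i\equiv t \pmod{t^2}$, which is exactly what is needed.

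The only point that genuinely requires care is the passage to an arbitrary ring $R$: the ghost map fails to be injective once $R$ has $p$-torsion, so the explicit comparison of components must be carried out over the $p$-torsion free universal ring $\Z[t]$ and only afterwards specialized along $\Z[t]\to R$. Everything else is a direct manipulation of the Witt polynomials, so I do not expect any serious obstacle beyond keeping track of this universality argument.
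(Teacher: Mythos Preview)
Your argument is correct. The paper itself does not give a proof of this lemma; it only states that it ``follows from easy calculations with Witt vectors'' and cites \cite[Lem.~1.2.3]{geisserhe}. Your reduction to the universal polynomial ring $\Z[t]$ together with the ghost-coordinate computation is precisely the standard way to carry out such a calculation, and the key step---deducing $Y_i-t\in t^2\Z[t]$ from $p^i(Y_i-t)\in t^2\Z[t]$ using that $\Z[t]/t^2\Z[t]$ is $p$-torsion free---is handled cleanly. So your write-up simply supplies the details the paper omits.
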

	
	\begin{proof}[Proof of Theorem \ref{exactproobj}.]

		First we show that
		$\RDRWlog{m}{r}{p^{m-1}\ul n} \subset j_\ast W_m \Omega^r_{U,\log}$ 
		(cf. Definition \ref{relative.definition}) lies in $\FDRW m r {\ul n}$.
		This is a local question so that we may assume that $X=\Spec(A)$ and $D=(t)$ for some $t\in A$. 
		By Lemma \ref{lem;giesserhe} we can write
		\[ [1+t^{p^{(m-1)}n}a]_{m}-[1]_m=(t^{p^{(m-1)}n}y_0,\cdots, t^{p^{(m-1)}n}y_{m-1}) \]
		with  $y_{i} \in A$ for $0\leq i \leq m-1$. 
		Noting $dx = 0$ for $x\in W_m(\mathbb F_p)$, we get
		\[  d[1+t^{p^{(m-1)}n}a]_{m} =  d(t^{p^{(m-1)}n}y_0,\cdots, t^{p^{(m-1)}n}y_{m-1})=d([t]_m^{n}\cdot (c_0,\cdots, c_{m-1}))\]
		with  $c_{i} \in A$ for $0\leq i \leq m-1$, where the second equality follows from 
		the formula
		\[[t]_m^{n} \cdot (c_0,\ldots,c_{m-1})=(t^n c_0,t^{np} c_1,\ldots, 
		t^{n p^{(m-1)}}c_{m-1}).\]
		Hence we get
		\begin{equation}\label{dlog.Witt} d\log[1+t^{p^{m-1}n}]_m=([1+t^{p^{m-1}n}]_m)^{-1} d([t]_m^n(c_0,\cdots,c_{m-1})) \in \FDRW m 1 {\ul n},\end{equation}
		noting $[1+t^{p^{m-1}n}]_m$ is a unit of $W_m\cO_X$.
		
		The surjectivity of $1-F$ as pro-systems follows by the same argument of
		the proof of \cite[Prop. 3.26]{illusiederham}.
		Indeed, as in loc. cit., the formula $dx=(F-1)(dVx+dV^2x+\cdots+dV^{m-1}x)$ implies that  
		\[ dW_m\Omega_{X|D_{\un}}^{r-1} \subset (1-F)(W_m\Omega_{X|D_{[n/p^m]}}^r).\]
		Therefore it is enough to show that
		\[ \FDRW{m}{r}{n} \xrightarrow{1-F} \FDRW{m}{r}{n}/dW_m\Omega_{X|D_{\un}}^{r-1} \]
		is surjective. 
		
		Theorem \ref{standardfiltration} implies that $\FDRW{m}{r}{n}/dW_m\Omega_{X|D_{\un}}^{r-1}$ is generated by sections 
		\[V^i[x]_{m-i}d\log[y_1]_{m}\cdots d\log[y_r]_m\;\;\text{with }
		0 \leq i\leq m-1,\]
		where
		$x\in \cO_X(-D_{p^i\un'})$ for some $\un'\leq \un$ and
		$y_j\in \cO_X^{\times}$ for $1\leq j \leq r$ such that
		\[d\log[y_1]_{m}\cdots d\log[y_r]_m\in \DRW{m}{r}{X|D_{\un-\un'}}.\] 
		(Note that in view of \eqref{dlog.Witt}, $d\log[y_i]_m$ may also contribute to the multiplicity.)
		We may then choose (\'etale locally) $y\in \cO_X(-D_{p^i\un'}) $ such that $y^p-y=x$.
		Then we have 
		\[
		\begin{aligned}
		(1-F)\big(V^i[y]_{m-i}d\log[y_1]_{m}\cdots d\log[y_r]_m\big)
		= V^i[x]_{m-i}d\log[y_1]_{m}\cdots d\log[y_r]_m.
		\end{aligned}\]
		which implies the desired surjectivity.
		
		Finally we show the exactness in the middle. 
		It suffices to show the following equality in $\DRW{m}{r}{X}$:
		\[ \FDRW{m}{r}{\ul n} \cap \DRWlog m r X  = \RDRWlog m r {p^{m-1}\ul n}.\]
		We prove this by induction on $m$. For $m=1$, this is Theorem 1.2.1.

		Let $x\in \FDRW{m}{r}{\ul n} \cap \DRWlog m r X  $, then we have
		\[ Rx=Fx \in \FDRW{m-1}{r}{p\un} \cap \DRWlog {m-1} r X.\]	
		By induction hypothesis, we have 
		\[ Rx=Fx \in \RDRWlog{m-1}{r}{p^{m-1}\un} .\]
		
		On the other hand, there is a commutative diagram 
		\[ \xymatrix{ 
			&& \RDRWlog{m}{r}{p^{m-1}\un} \ar[r]^R \ar@{^(->}[d] & \RDRWlog{m-1}{r}{p^{m-1}\un} \ar[r] \ar@{^(->}[d] &0\\
			0\ar[r] & \Omega_{X,\log}^r \ar[r]^{\ul{p}^{m-1}} & \DRWlog{m}{r}{X} \ar[r]^R & \DRWlog{m-1}{r}{X} \ar[r] &0
		}\]
		the lower sequence is exact by \cite[Lem. 3]{colliottorsion}.
		Hence there exist $y\in \RDRWlog{m}{r}{p^{m-1}\un} $ and $z\in \Omega_{X,\log}^r$, such that $x-y= \ul{p}^{m-1}\cdot z$.
		
		Since $\ul{p}^{m-1}\cdot z =x-y \in \FDRW{m}{r}{\ul n}$,
		Corollary \ref{px.implies.x} implies $z\in \Omega_{X|D_{\un}}^r$.
		By Theorem 1.2.1, this implies $z\in \Omega^r_{X|D_{\un},\log}$ and hence
		$\ul{p}^{m-1}\cdot z \in \RDRWlog{m}{r}{p^{m-1}\un}$ (cf. Theorem \ref{Kummerlogform}). This proves $x=y+ \ul{p}^{m-1}\cdot z\in \RDRWlog{m}{r}{p^{m-1}\un}$ as desired.
		
	\end{proof}

	\bigskip
	\section{The pairing on the relative logarithmic de Rham-Witt sheaves}
	\bigskip
	Let $X, D, j\colon U\hookrightarrow X$ be as in \S2.  In the following we want to define a pairing between cohomology group of $\DRWlog{m}{r}{U}$ and cohomology group of $\lq\lq \varprojlim\limits_D" W_m\Omega_{X|D, \log}^{d-r}$.  In order to define a pairing on the sheaves level, we have to write $\DRWlog{n}{r}{U}$ as ind-system of sheaves on $X$.
	
	\subsection{The pairing}
	To define our desired pairing, we will use the notation of two-term complexes. Let's briefly recall the definition. In   \cite{milneduality},  Milne defined a pairing of two-term complexes as follows:
	
	Let $$\mathscr{F}^{\bullet}=(\mathscr{F}^0 \xrightarrow{d_{\mathscr{F}}} \mathscr{F}^1),  \quad \mathscr{G}^{\bullet}=(\mathscr{G}^0 \xrightarrow{d_{\mathscr{G}}} \mathscr{G}^1)$$
	and $$ \mathscr{H}^{\bullet}=(\mathscr{H}^0 \xrightarrow{d_{\mathscr{H}}} \mathscr{H}^1) $$
	be two-term complexes.  A pairing of two-term complexes
	\[\mathscr{F}^{\bullet} \times \mathscr{G}^{\bullet}  \to \mathscr{H}^{\bullet}\]
	is a system of pairings  \[  \langle\; ,\, \rangle_{0,0}^0 :  \mathscr{F}^0 \times \mathscr{G}^0 \to \mathscr{H}^0;    \]
	\[  \langle\; ,\, \rangle_{0,1}^1 :  \mathscr{F}^0 \times \mathscr{G}^1 \to \mathscr{H}^1;    \]
	\[  \langle\; ,\, \rangle_{1,0}^1 :  \mathscr{F}^1 \times \mathscr{G}^0 \to \mathscr{H}^1,   \]
	such that 
	\begin{equation}\label{compatibility}  d_{\mathscr{H}} (\langle x, y\rangle_{0,0}^0)=\langle x, d_{\mathscr{G}}(y)\rangle_{0,1}^1+\langle d_{\mathscr{F}}(x), y\rangle_{1,0}^1 
	\end{equation}
	for all $x\in \mathscr{F}^0$, $y \in \mathscr{G}^0$. Such a pairing is the same as a mapping
	\[  \mathscr{F}^{\bullet} \otimes \mathscr{G}^{\bullet} \to \mathscr{H}^{\bullet}. \]

	In our situation, for any tuple of integers $\underline n \geq \underline 1$ we set 
	
	\begin{equation}\label{finite.WFn} 
	W_m\mathscr{F}^{r,\bullet}_{-\un}:= [ Z_1\FDRW m r {-\ul n} \xrightarrow{1-C} \FDRW m r {-\ul n}],
	\end{equation}
	where $Z_1\FDRW m r {-\un}:=j_*Z_1\DRW{m}{r}{U} \cap \FDRW m r {-\un}$ with $j: U \to X$ the canonical map and
	\[Z_1\DRW{m}{r}{U}:=\Image(F: \DRW {m+1} r U \to \DRW m r U) \stackrel{(\ref{imageF})}{=}\Ker(F^{m-1}d: \DRW m r U \to \Omega_U^{r+1}),\]
	and $C$ is the higher Cartier map \cite[\S 4]{katoI}:\[C\colon Z_1W_{m}\Omega_U^r/dV^{m-1}\Omega_U^{r-1} \xrightarrow{\cong} W_m\Omega_U^r. \]
	We also set 
	\begin{equation}\label{finite.WGn} 
	W_m\mathscr{G}^{d-r,\bullet}_{\un+\ul 1}:= [ \FDRW{m}{d-r}{\ul n+\ul 1}\xrightarrow{1-F} \FDRW m {d-r}{\ul n+\ul 1}/dV^{m-1}\Omega_{X|D_{p^{m-1}\ul n}}^{d-r-1}],   
	\end{equation}
	\begin{equation}\label{finite.WHn} 
	W_m\mathscr{H}^{\bullet}:=[\DRW m d X \xrightarrow{1-C} \DRW m d X]. 
	\end{equation}
	By \cite[Lem. 1.1]{milneduality} we have a canonical isomorphism
	\begin{equation}\label{finite.WHn-log} 
	\DRWlog m d X[0]\simeq W_m\mathscr{H}^{\bullet}.
	\end{equation}
	
	\begin{lemma}
		For any tuple of integers $\underline n \geq \underline 1$ we have a natural pairing of two-term complexes
		\begin{equation}\label{Wm-2-pairing}
		W_m\mathscr{F}^{r,\bullet}_{-\un} \times W_m\mathscr{G}^{d-r,\bullet}_{\un+\ul 1} \to W_m\mathscr{H}^{\bullet}.
		\end{equation}
	\end{lemma}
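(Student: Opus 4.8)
The plan is to exhibit the three constituent pairings demanded by Milne's formalism and to verify the single compatibility relation \eqref{compatibility}; the construction mimics Milne's in the proper case $D=\emptyset$, the only new feature being the need to mediate between the Cartier operator $C$ used in $\mathscr{F}^\bullet,\mathscr{H}^\bullet$ and the (inverse Cartier) operator $F$ used in $\mathscr{G}^\bullet$. The natural candidate is the multiplication of the filtered de Rham--Witt complex, corrected by $C$ in the one slot where the operators differ:
\[
\langle x, y\rangle^0_{0,0} = x\cdot y,\qquad \langle x', y\rangle^1_{1,0} = x'\cdot y,\qquad \langle x, \bar z\rangle^1_{0,1} = -\,C(x\cdot z),
\]
where $\cdot$ is the product in $j_*\DRW m \bullet U$ (the sign in the third entry depends on the chosen total-complex sign convention). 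The first thing I would record is that all three values land in $\mathscr H^\bullet=[\DRW m d X\xrightarrow{1-C}\DRW m d X]$: the module structure gives $\FDRW m r {\underline a}\cdot\FDRW m s {\underline b}\subseteq\FDRW m {r+s}{\underline a+\underline b}$ for $\underline a,\underline b\in\Z^\Lambda$, and since $-\un+(\un+\ul 1)=\ul 1\geq \ul 0$ every product lies in $\FDRW m d {\ul 1}\subseteq\DRW m d X$; as $d$ is the top degree we have $Z_1\DRW m d X=\DRW m d X$, so $C$ is defined on these products and again takes values in $\DRW m d X$.

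The second step is the well-definedness of $\langle x,-\rangle^1_{0,1}$ on the quotient $\mathscr G^1=\FDRW m {d-r}{\un+\ul 1}/dV^{m-1}\Omega^{d-r-1}_{X|D_{p^{m-1}\un}}$, i.e. that $C(x\cdot dV^{m-1}\omega)=0$ for $x\in Z_1\FDRW m r {-\un}$ and $\omega\in\Omega^{d-r-1}_{X|D_{p^{m-1}\un}}$. Here I would use the projection formula $x\cdot V^{m-1}\omega=V^{m-1}(F^{m-1}x\cdot\omega)$ (the iterated form of $x\cdot Vy=V(Fx\cdot y)$ used in the proof of Theorem \ref{standardfiltration}) together with the Leibniz rule. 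Note first that the multiplicities are tuned so that $F^{m-1}x\cdot\omega\in\Omega^{d-1}_X(\log D)$ is an honest (log) form, the poles of order $p^{m-1}\un$ of $F^{m-1}x$ cancelling the zeros of $\omega$ (Lemma \ref{FVR.on.filtration}). Since $x\in Z_1\FDRW m r{-\un}$ means $F^{m-1}dx=0$, the term $dx\cdot V^{m-1}\omega=V^{m-1}(F^{m-1}(dx)\cdot\omega)$ vanishes, whence $x\cdot dV^{m-1}\omega=\pm\,d(x\cdot V^{m-1}\omega)=\pm\,dV^{m-1}(F^{m-1}x\cdot\omega)\in dV^{m-1}\Omega^{d-1}$. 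As $dV^{m-1}\Omega^{d-1}$ is precisely the kernel of the higher Cartier isomorphism $C\colon Z_1\DRW m d X/dV^{m-1}\Omega^{d-1}\isom\DRW m d X$, we conclude $C(x\cdot dV^{m-1}\omega)=0$, as needed.

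The heart of the matter is the compatibility \eqref{compatibility}. For $x\in\mathscr F^0=Z_1\FDRW m r {-\un}$ and $y\in\mathscr G^0=\FDRW m {d-r}{\un+\ul 1}$ one computes
\[
\langle x,(1-F)y\rangle^1_{0,1}+\langle(1-C)x,y\rangle^1_{1,0}= -C(x\cdot y)+C(x\cdot Fy)+x\cdot y-Cx\cdot y,
\]
so that, since $d_{\mathscr H}\langle x,y\rangle^0_{0,0}=(1-C)(x\cdot y)=x\cdot y-C(x\cdot y)$, the relation \eqref{compatibility} is equivalent to the single identity
\[
C(x\cdot Fy)=Cx\cdot y.
\]
This I would deduce from two properties of $C$: its multiplicativity on cycles, $C(ab)=C(a)\,C(b)$ for $a,b\in Z_1$ (standard, coming from $Z_1\DRW m \bullet U=\mathrm{Image}(F)$ and the multiplicativity of the de Rham--Witt Frobenius $F\colon\DRW{m+1}\bullet U\to\DRW m \bullet U$ and restriction $R$), and the fact that the operator $F$ occurring in $\mathscr G^\bullet$ is inverse to the higher Cartier isomorphism, so that $C(Fy)=y$ (at $m=1$ this $F$ is exactly the $C^{-1}$ of Theorem \ref{logtocoherent}, and in general it is the operator of Theorem \ref{exactproobj}). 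Choosing a representative of $Fy$ in $Z_1\FDRW m {d-r}{\un+\ul 1}$, the product $x\cdot Fy$ is a cycle in top degree and $C(x\cdot Fy)=C(x)\,C(Fy)=Cx\cdot y$.

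I expect the main obstacle to be exactly this Cartier--Frobenius identity $C(x\cdot Fy)=Cx\cdot y$, together with the attendant bookkeeping: one must track the divisorial multiplicities along $D$ under $C$, $F$, $V^{m-1}$ and the products so that all intermediate terms stay in the intended filtered pieces $\FDRW m \bullet \bullet$ rather than merely in $j_*\DRW m \bullet U$, and one must make the identification of the $\mathscr G$-operator $F$ with the inverse of the higher Cartier isomorphism precise at level $m$ (reconciling it with the level-dropping de Rham--Witt $F$ of Lemma \ref{FVR.on.filtration}). Once these are settled, naturality in $\un$ is immediate: the products and $C$ commute with the inclusions $\FDRW m \bullet {\un}\hookrightarrow\FDRW m \bullet {\un'}$ for $\un\geq\un'$, so the pairing is compatible with the pro-/ind-structures in $\un$, and on $H^0$ it recovers the wedge product of logarithmic forms $j_*\DRWlog m r U\times\DRWlog m {d-r}{X|D}\to\DRWlog m d X$, as it must.
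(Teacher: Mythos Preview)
Your proposal is correct and takes essentially the same approach as the paper: the three constituent pairings are exactly the cup product (for $\langle\,,\,\rangle^0_{0,0}$ and $\langle\,,\,\rangle^1_{1,0}$) and $(\alpha,\beta)\mapsto -C(\alpha\wedge\beta)$ (for $\langle\,,\,\rangle^1_{0,1}$), which is precisely the paper's construction. The paper's own proof is much terser, simply asserting that ``it is easy to see those pairings are compatible,'' whereas you supply the detailed verification via the identity $C(x\cdot Fy)=Cx\cdot y$ and the well-definedness modulo $dV^{m-1}$.
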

	\begin{proof}
		By the definition of filtered de Rham-Witt complexes, the cup product induces pairings 
		\[ \FDRW m r {-\ul n}  \times \FDRW m {d-r} {\ul n+\ul 1} \to \FDRW m d {\ul 1} \subset \DRW m d X\]
		and 
		\[ Z_1\FDRW m r {-\ul n}  \times \FDRW m {d-r} {\ul n+\ul 1} \to \FDRW m d {\ul 1} \subset \DRW m d X.\]
		By composing with the higher Cartier operators, we have the following pairing 
		
		\[  Z_1\FDRW m r {-\ul n}  \times \FDRW m {d-r} {\ul n+\ul 1}/dV^{m-1}\Omega_{X|D_{p^{m-1}\ul n}}^{d-r-1}\to \DRW m d X\;;\; (\alpha, \beta) \mapsto -C(\alpha\wedge\beta) .  \] 
		It is easy to see those pairings are compatible.
	\end{proof}
	
	Now let $\underline n$ runs over $\bN^{\Lambda}$, we get a pairing between an ind-object and a pro-object
	\begin{equation}\label{indpropairing}
	\lq\lq \varinjlim_n" W_m\mathscr{F}^{r,\bullet}_{-\un}  \times  \lq\lq \varprojlim_n" W_m\mathscr{G}^{d-r,\bullet}_{\un+\ul 1} \to W_m\mathscr{H}^{\bullet}. 
	\end{equation}
	or equivalently, a morphism in the category of pro-objects of complexes of abelian sheaves
	\[ \lq\lq \varprojlim_n" W_m\mathscr{G}^{d-r,\bullet}_{\un+\ul 1} \to \lq\lq \varprojlim_n" \mathcal{H}\kern -.5pt om(W_m\mathscr{F}^{r,\bullet}_{-\un},W_m\mathscr{H}^{\bullet}),  \]
	where $W_m\mathscr{H}^{\bullet}$ is viewed as a constant pro-object.
	\begin{remark}
		To construct the pairing (\ref{indpropairing}) in a more natural way,
		we can use a full subcategory of the ind-category of pro-objects of coherent complexes(cf. \cite[\S2.1]{kato00}).
	\end{remark}

	\bigskip
	\section{Duality over finite fields}
	\bigskip
	\addtocounter{subsection}{1}
	In this section we assume that  the base field $k$ is finite.  
	By taking hypercohomology groups of the pairing (\ref{indpropairing}) using
	\eqref{finite.WHn-log} , we get a pairing
	
	\begin{equation*}
	\varinjlim_n \mathbb{H}^i(X, W_m\mathscr{F}^{r,\bullet}_{-\un}) \times \varprojlim_n \mathbb{H}^{d+1-i}(X, W_m\mathscr{G}^{d-r,\bullet}_{\un+\ul 1}) \to \mathbb{H}^{d+1}(X, \DRWlog m d X).
	\end{equation*}
	Note that there is an isomorphism in the bounded derived category $D^b(X,\Z/p^m\Z)$ of \'etale $\Z/p^m\Z$-modules:
	\[\varinjlim_nW_m\mathscr{F}^{r,\bullet}_{-\un}=[j_*Z_1\DRW m r U \xrightarrow{1-C} j_*\DRW m r U] \cong  Rj_*\DRWlog m r U,\]
	where the second isomorphism comes from the fact that $j$ is affine.
	Hence we get
	\[ \varinjlim_n \mathbb{H}^i(X, W_m\mathscr{F}^{r,\bullet}_{-\un}) \cong H^i(U, \DRWlog m r U )\; \;\text{for any} \; i\in \Z.\]
	Theorem \ref{exactproobj} implies that 
	\begin{equation}\label{need.p.power} 
	\varprojlim_n \mathbb{H}^{d+1-i}(X, W_m\mathscr{G}^{d-r,\bullet}_{\un+\ul 1}) 
	\cong \varprojlim_D H^{d+1-i}(X, \DRWlog{m}{d-r}{X|D}).
	\end{equation}
	
	Combining these facts, we obtain the following corollary.

	\begin{corollary}
		We have a natural pairing of abelian groups
		\begin{equation}\label{pairing.coh}
		H^i(U, \DRWlog m r U) \times \varprojlim_D H^{d+1-i}(X, \DRWlog m {d-r} {X|D}) \to H^{d+1}(X, \DRWlog m d X) \xrightarrow{\text{Tr}} \Z/p^m\Z,
		\end{equation}
		where the trace map is the canonical trace map of logarithmic de Rham-Witt sheaves (cf.\cite[Cor. 1.12]{milneduality}).
	\end{corollary}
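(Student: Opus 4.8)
The plan is to produce \eqref{pairing.coh} by taking hypercohomology of the ind--pro pairing \eqref{indpropairing} and then reading off the three terms through the identifications already assembled above, so that essentially all that remains is to organize the ind--pro bookkeeping correctly. First I would fix $\un\geq\ul 1$ and apply the functor $\mathbb{H}^\bullet(X,-)$ to the pairing of two-term complexes \eqref{Wm-2-pairing}. Since a pairing of two-term complexes is the same datum as a map $W_m\mathscr{F}^{r,\bullet}_{-\un}\otimes W_m\mathscr{G}^{d-r,\bullet}_{\un+\ul 1}\to W_m\mathscr{H}^{\bullet}$, the cup product on hypercohomology yields a system of pairings
\[ \mathbb{H}^i(X, W_m\mathscr{F}^{r,\bullet}_{-\un}) \times \mathbb{H}^{d+1-i}(X, W_m\mathscr{G}^{d-r,\bullet}_{\un+\ul 1}) \to \mathbb{H}^{d+1}(X, W_m\mathscr{H}^{\bullet}), \]
whose target is identified with $H^{d+1}(X,\DRWlog m d X)$ via the quasi-isomorphism \eqref{finite.WHn-log}.

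Next I would pass to the limit over $\un\in\bN^\Lambda$. The left-hand groups form an ind-system, the right-hand groups a pro-system, and the target is constant in $\un$; the compatibility of the stage-wise pairings with the transition maps, which is precisely what \eqref{indpropairing} records, lets them descend to a single pairing
\[ \Big(\varinjlim_n \mathbb{H}^i(X, W_m\mathscr{F}^{r,\bullet}_{-\un})\Big) \times \Big(\varprojlim_n \mathbb{H}^{d+1-i}(X, W_m\mathscr{G}^{d-r,\bullet}_{\un+\ul 1})\Big) \to H^{d+1}(X,\DRWlog m d X). \]
Concretely, the universal properties do the work: a class on the left is represented at some finite stage $n_0$, a class on the right is projected to the same stage, and one pairs them there; independence of the choice of $n_0$ is exactly the compatibility just noted.

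Finally I would rewrite the two limit groups. On the ind side, the exactness of filtered colimits lets $\varinjlim_n$ commute with $\mathbb{H}^i(X,-)$ on the Noetherian \'etale site, and together with $\varinjlim_n W_m\mathscr{F}^{r,\bullet}_{-\un}\cong Rj_*\DRWlog m r U$ (using that $j$ is affine) this gives $\varinjlim_n \mathbb{H}^i(X, W_m\mathscr{F}^{r,\bullet}_{-\un}) \cong H^i(U,\DRWlog m r U)$. On the pro side, Theorem \ref{exactproobj} identifies $\varprojlim_n \mathbb{H}^{d+1-i}(X, W_m\mathscr{G}^{d-r,\bullet}_{\un+\ul 1})$ with $\varprojlim_D H^{d+1-i}(X,\DRWlog m {d-r} {X|D})$. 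Composing the resulting pairing with the trace map $H^{d+1}(X,\DRWlog m d X)\xrightarrow{\text{Tr}}\Z/p^m\Z$ of \cite[Cor. 1.12]{milneduality} then yields \eqref{pairing.coh}.

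The main obstacle is not any hard computation, since all the substantial input is in the earlier results, but rather the careful handling of the ind--pro formalism: one must verify that the stage-wise pairings genuinely glue to a pairing of the colimit and limit groups, and that the exactness of filtered colimits together with the pro-exactness coming from Theorem \ref{exactproobj} make the two identifications legitimate. Once these compatibilities are confirmed, the corollary follows by direct assembly.
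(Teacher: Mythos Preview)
Your proposal is correct and follows essentially the same route as the paper: take hypercohomology of the ind--pro pairing \eqref{indpropairing}, identify the ind side with $H^i(U,\DRWlog m r U)$ via $\varinjlim_n W_m\mathscr{F}^{r,\bullet}_{-\un}\cong Rj_*\DRWlog m r U$ (using that $j$ is affine), identify the pro side via Theorem \ref{exactproobj}, and compose with the trace map. Your account is in fact slightly more explicit than the paper's about why the stage-wise pairings descend to the colimit--limit pairing, but the argument is the same.
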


	Noting that $H^{d+1-i}(X, \DRWlog m {d-r} {X|D})$ are finite, we can endow $ \varprojlim\limits_D H^{d+1-i}(X, \DRWlog m {d-r} {X|D})$ with the inverse limit topology, i.e, the profinite topology.
	
	\begin{proposition}
		The pairing is continuous  if we endow $H^i(U, \DRWlog m r U) $ with the discrete topology and $\varprojlim\limits_D H^{d+1-i}(X, \DRWlog m {d-r} {X|D})$ with the profinite topology.
	\end{proposition}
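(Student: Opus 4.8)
The plan is to reduce continuity of the bilinear pairing \eqref{pairing.coh} to separate continuity in its profinite variable, and then to show that each resulting partial map factors through a finite quotient. Write $A=H^i(U, \DRWlog m r U)$ (discrete), $B=\varprojlim\limits_D H^{d+1-i}(X, \DRWlog m {d-r}{X|D})$ (profinite), and note that the target $\Z/p^m\Z$ is finite, hence discrete. First I would observe that because $A$ is discrete we have $A\times B=\coprod_{a\in A}\{a\}\times B$, so for the pairing $\phi\colon A\times B\to \Z/p^m\Z$ the preimage of a point $c$ is $\phi^{-1}(c)=\coprod_{a\in A}\{a\}\times\phi_a^{-1}(c)$, where $\phi_a:=\phi(a,-)\colon B\to \Z/p^m\Z$. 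Since $\Z/p^m\Z$ is discrete, this set is open exactly when each $\phi_a^{-1}(c)$ is open in $B$. Thus $\phi$ is continuous if and only if every partial map $\phi_a$ is continuous.

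Next I would use that $A\cong \varinjlim\limits_n \mathbb{H}^i(X, W_m\mathscr{F}^{r,\bullet}_{-\un})$ is a filtered colimit, so any fixed class $a\in A$ is the image of some class $a_{\un_0}\in \mathbb{H}^i(X, W_m\mathscr{F}^{r,\bullet}_{-\un_0})$ at a finite stage $\un_0\in \bN^{\Lambda}$. The key point is then that $\phi_a$ factors through the finite-level projection
\[
B\xrightarrow{\ \mathrm{pr}_{\un_0}\ } \mathbb{H}^{d+1-i}(X, W_m\mathscr{G}^{d-r,\bullet}_{\un_0+\ul 1}).
\]
This I would deduce by unwinding that \eqref{pairing.coh} is the hypercohomology realization of the ind-pro pairing \eqref{indpropairing}, which at each stage $\un$ is assembled from the pairing of two-term complexes $W_m\mathscr{F}^{r,\bullet}_{-\un}\times W_m\mathscr{G}^{d-r,\bullet}_{\un+\ul 1}\to W_m\mathscr{H}^{\bullet}$: pairing a class lifted from stage $\un_0$ against a compatible system $(b_{\un})_{\un}\in B$ only involves the component $b_{\un_0}$, so $\phi_a(b)$ depends solely on $\mathrm{pr}_{\un_0}(b)$.

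Finally, since $\mathbb{H}^{d+1-i}(X, W_m\mathscr{G}^{d-r,\bullet}_{\un_0+\ul 1})\cong H^{d+1-i}(X, \DRWlog m {d-r}{X|D})$ is finite by \eqref{need.p.power}, and the projection $\mathrm{pr}_{\un_0}$ from the profinite group $B$ onto a finite level is continuous with open kernel, the factored map $\phi_a$ has open kernel and is therefore continuous. As this holds for every $a\in A$, the first step gives continuity of $\phi$.

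The main obstacle I expect is the middle step: carefully verifying the factorization through the finite stage $\un_0$, i.e.\ the compatibility of the hypercohomology pairing with the ind/pro transition structure of \eqref{indpropairing} and the identification \eqref{need.p.power}. The reduction to separate continuity and the open-kernel argument are purely formal and should present no difficulty.
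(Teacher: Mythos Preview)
Your proposal is correct and follows essentially the same approach as the paper: reduce continuity to the statement that for each fixed $\alpha$ the partial map $\langle\alpha,\cdot\rangle$ factors through a finite stage of the pro-system (the paper phrases this as ``the annihilator of $\alpha$ is open'' and records the factorization as a separate lemma, proved ``directly by the construction of the pairing''). One small imprecision: your appeal to \eqref{need.p.power} for the isomorphism $\mathbb{H}^{d+1-i}(X, W_m\mathscr{G}^{d-r,\bullet}_{\un_0+\ul 1})\cong H^{d+1-i}(X,\DRWlog m{d-r}{X|D})$ and for finiteness is not quite right, since \eqref{need.p.power} is only an identification of the \emph{limits}; what you actually need is that $\mathbb{H}^{d+1-i}(X, W_m\mathscr{G}^{d-r,\bullet}_{\un_0+\ul 1})$ is finite (coherent cohomology over a finite field) and that an isomorphism of pro-systems lets you pass from a factorization through one finite stage to a factorization through some finite stage of the other.
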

	\begin{proof}
		It suffices to show that the annihilator of each $\alpha \in H^i(U, \DRWlog m r U)$ is open in the projective limit $\varprojlim\limits_D H^{d+1-i}(X, \DRWlog m {d-r} {X|D})$. This follows directly from the lemma below. 
	\end{proof}
	\begin{lemma}
		For any $\alpha \in H^i(U, \DRWlog m r U) $, the morphism induced by \eqref{pairing.coh} 
		\[ \langle \alpha, \cdot \rangle:  \varprojlim\limits_{D} H^{d+1-i}(X, \DRWlog m {d-r} {X|D}) \to H^{d+1}(X, \DRWlog m d X) \]
		factors through $H^{d+1-i}(X, \DRWlog m {d-r} {X|D})$ for some $D\in \Sigma$.
	\end{lemma}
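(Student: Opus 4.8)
The plan is to exploit the fact that $\alpha$ already lives at a finite stage of the ind-system computing $H^i(U,\DRWlog m r U)$, so that $\langle\alpha,\cdot\rangle$ only sees a single term of the pro-system on the right, and then to transport this finite-stage factorization across the pro-isomorphism supplied by Theorem \ref{exactproobj}. Concretely, I would first invoke the identification $\varinjlim_n \mathbb{H}^i(X, W_m\mathscr{F}^{r,\bullet}_{-\un}) \cong H^i(U,\DRWlog m r U)$ recalled just before \eqref{pairing.coh}. Since $\bN^{\Lambda}$ is directed and the colimit is filtered, $\alpha$ is the image of some $\alpha_0 \in \mathbb{H}^i(X, W_m\mathscr{F}^{r,\bullet}_{-\underline{n}_0})$ for a fixed $\underline{n}_0 \in \bN^{\Lambda}$. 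By construction the ind-pro pairing \eqref{indpropairing} is assembled from the finite-stage pairings \eqref{Wm-2-pairing}, which are compatible with the transition maps of the two systems; hence for $\beta=(\beta_{\un})_{\un}\in \varprojlim_n \mathbb{H}^{d+1-i}(X, W_m\mathscr{G}^{d-r,\bullet}_{\un+\ul 1})$ one has $\langle\alpha,\beta\rangle=\langle\alpha_0,\beta_{\underline{n}_0}\rangle$, the right-hand side being the cup product in hypercohomology induced by \eqref{Wm-2-pairing} at the stage $\underline{n}_0$. This value is independent of the chosen representative $\alpha_0$ precisely because of the said compatibility, and it exhibits $\langle\alpha,\cdot\rangle$ as the composite of
\[ \mathrm{pr}_{\underline{n}_0}\colon \varprojlim_n \mathbb{H}^{d+1-i}(X, W_m\mathscr{G}^{d-r,\bullet}_{\un+\ul 1}) \longrightarrow \mathbb{H}^{d+1-i}(X, W_m\mathscr{G}^{d-r,\bullet}_{\underline{n}_0+\ul 1}) \]
with the finite-stage map $\langle\alpha_0,\cdot\rangle$ into $H^{d+1}(X,\DRWlog m d X)$.

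It remains to turn this into a factorization through a single $H^{d+1-i}(X,\DRWlog m {d-r}{X|D_1})$. For this I would use that Theorem \ref{exactproobj} is an exact sequence of pro-sheaves, so that it upgrades \eqref{need.p.power} to an isomorphism of pro-objects in $D^b(X,\Z/p^m\Z)$ between $\lq\lq\varprojlim_D"\DRWlog m {d-r}{X|D}[0]$ and $\lq\lq\varprojlim_n" W_m\mathscr{G}^{d-r,\bullet}_{\un+\ul 1}$. Applying $R\Gamma(X,-)$ and passing to $H^{d+1-i}$ yields a pro-isomorphism of pro-abelian groups realized by a compatible family of maps $H^{d+1-i}(X,\DRWlog m {d-r}{X|D(n)}) \to \mathbb{H}^{d+1-i}(X, W_m\mathscr{G}^{d-r,\bullet}_{\un+\ul 1})$; setting $D_1:=D(\underline{n}_0)$, the projection $\mathrm{pr}_{\underline{n}_0}$ above then factors through $\varprojlim_D H^{d+1-i}(X,\DRWlog m {d-r}{X|D}) \to H^{d+1-i}(X,\DRWlog m {d-r}{X|D_1})$, and composing with $\langle\alpha_0,\cdot\rangle$ gives the asserted factorization.

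I expect the last step to be the main obstacle. At a single finite stage the map $1-F$ in \eqref{finite.WGn} is not surjective — its surjectivity holds only after passing to the pro-system — so $\mathbb{H}^{d+1-i}(X, W_m\mathscr{G}^{d-r,\bullet}_{\underline{n}_0+\ul 1})$ is honestly the hypercohomology of a two-term complex and is not literally $H^{d+1-i}(X,\DRWlog m {d-r}{X|D_1})$ for the naive divisor dictated by the kernel computation $\FDRW m {d-r}{\un}\cap \DRWlog m {d-r} X = \RDRWlog m {d-r}{p^{m-1}\un}$ used in the proof of Theorem \ref{exactproobj}. The delicate point is therefore to check that the pro-isomorphism is given by an honest morphism of pro-systems after a cofinal re-indexing by powers of $p$; once this is granted, the factorization of a morphism out of a cofiltered limit through a prescribed term of a pro-isomorphic system is purely formal and produces the single divisor $D_1$ without any further estimate.
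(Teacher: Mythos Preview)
Your proposal is correct and is a detailed unpacking of the paper's one-line proof, which reads simply ``This follows directly by the construction of the pairing.'' The obstacle you anticipate in the last paragraph is not real: since the identification \eqref{need.p.power} is induced by the levelwise inclusion $\RDRWlog m {d-r}{p^{m-1}(\un+\ul 1)}[0]\hookrightarrow W_m\mathscr{G}^{d-r,\bullet}_{\un+\ul 1}$, the projection from $\varprojlim_D H^{d+1-i}(X,\DRWlog m {d-r}{X|D})$ to any stage $\mathbb{H}^{d+1-i}(X, W_m\mathscr{G}^{d-r,\bullet}_{\un_0+\ul 1})$ already factors through $H^{d+1-i}(X,\RDRWlog m {d-r}{p^{m-1}(\un_0+\ul 1)})$ by construction, with no further pro-category argument needed.
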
 
	\begin{proof}
		This follows directly by the construction of the pairing. 
	\end{proof}

	Our main result in this section is the following duality theorem. 
	\begin{theorem}\label{duality.thm.finite}
		The pairing (\ref{pairing.coh}) is a perfect pairing of topological $\Z/p^m\Z$-modules, i.e, it induces an isomorphism of profinite groups
		\[ \varprojlim_D H^{d+1-i}(X, \DRWlog m {d-r} {X|D}) \xrightarrow{\cong} H^i(U, \DRWlog m r U)^{\vee}, \]
		where $A^\vee$ is the Pontryagin dual of a discrete group $A$. 
	\end{theorem}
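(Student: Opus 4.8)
The plan is to deduce the perfectness from Milne's duality formalism for two-term complexes over a finite field \cite{milneduality}, applied to the complexes $W_m\mathscr{F}^{r,\bullet}_{-\un}$, $W_m\mathscr{G}^{d-r,\bullet}_{\un+\ul1}$ and $W_m\mathscr{H}^{\bullet}$ of \S3, using the identifications already established: the quasi-isomorphism $\varinjlim_n W_m\mathscr{F}^{r,\bullet}_{-\un}\simeq Rj_*\DRWlog m r U$, the isomorphism \eqref{need.p.power}, and the trace isomorphism $H^{d+1}(X,\DRWlog m d X)\xrightarrow{\cong}\Z/p^m\Z$ of \cite[Cor.~1.12]{milneduality} together with \eqref{finite.WHn-log}. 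Two ingredients drive the argument: coherent (Grothendieck--Serre) duality on the proper $X$, which produces a perfect $k$-bilinear pairing in total degree $d$ between the coherent terms of $\mathscr{F}^{\bullet}$ and those of $\mathscr{G}^{\bullet}$; and the finite-field mechanism, by which the Artin--Schreier-type operators $1-C$ and $1-F$ promote this degree-$d$ $k$-duality to the asserted degree-$(d+1)$ perfect pairing of $\Z/p^m\Z$-modules.

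First I would reduce to the case $m=1$. The short exact sequence of Theorem \ref{Kummerlogform}, together with the fact that $[D/p]$ ranges cofinally over $\Sigma$ (so that $\varprojlim_D W_{m-1}\Omega^{d-r}_{X|[D/p],\log}=\varprojlim_D W_{m-1}\Omega^{d-r}_{X|D,\log}$), yields an exact sequence of pro-sheaves relating level $m$ to levels $1$ and $m-1$; on $U$ the analogous sequence
\[ 0\to \DRWlog 1 r U \xrightarrow{\ul p^{m-1}} \DRWlog m r U \xrightarrow{R} \DRWlog{m-1}r U \to 0 \]
holds by \cite{colliottorsion}. The multiplication-by-$p$ (Verschiebung-type) map $\ul p$ on one side is adjoint under \eqref{pairing.coh} to the restriction map $R$ on the other, and the trace is compatible with these, so the two sequences fit into a ladder of pairings; by the five lemma, perfectness for $m=1$ and $m-1$ forces it for $m$. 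Reducing to $m=1$ is what makes the coherent duality usable, since only then are the sheaves $\Omega^r_X(\log D)\otimes\cO_X(D_{\un})$ locally free $\cO_X$-modules.

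For $m=1$ the cup products defining \eqref{Wm-2-pairing} are, on the coherent level, the wedge products
\[ \Omega^r_X(\log D)\otimes\cO_X(D_{\un}) \;\times\; \Omega^{d-r}_X(\log D)\otimes\cO_X(-D_{\un}-D_{\ul1}) \longrightarrow \Omega^d_X(\log D)\otimes\cO_X(-D_{\ul1})=\omega_X. \]
Since $\bigl(\Omega^r_X(\log D)\otimes\cO_X(D_{\un})\bigr)^{\vee}\otimes\omega_X\cong\Omega^{d-r}_X(\log D)\otimes\cO_X(-D_{\un}-D_{\ul1})$, Serre duality identifies $H^{d-i}(X,\FDRW 1{d-r}{\un+\ul1})$ with the $k$-linear dual of $H^i(X,\FDRW 1 r{-\un})$, and, after reducing the $Z_1$-subsheaves to ordinary differential forms via the Cartier exact sequences, likewise for the remaining terms; thus the pairing of complexes exhibits $\mathscr{G}^{d-r,\bullet}_{\un+\ul1}$ as the Serre dual of $\mathscr{F}^{r,\bullet}_{-\un}$ in degree $d$. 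Finiteness of $H^{d+1-i}(X,\DRWlog m{d-r}{X|D})$ follows along the way: through Theorem \ref{logtocoherent} these groups are extensions of the kernel and cokernel of $1-C^{-1}$ acting on finite-dimensional $k$-vector spaces, and $1-(\text{Frobenius-semilinear})$ has finite kernel and cokernel over the finite field $k$.

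The main obstacle is the last step, converting the degree-$d$ $k$-duality into the degree-$(d+1)$ perfect $\Z/p^m\Z$-duality. Here I would invoke Milne's finite-field lemma applied to the operators: on each finite-dimensional $k$-space the Cartier operator $C$ (on the $\mathscr{F}$-side) is $\sigma^{-1}$-semilinear and the inverse Cartier/Frobenius $F$ (on the $\mathscr{G}$-side) is $\sigma$-semilinear for the absolute Frobenius $\sigma$, and they are mutually adjoint for the Serre pairing; consequently the induced pairing between $\Ker(1-C)$ and $\mathrm{coker}(1-F)$ is perfect with values in $\mathrm{coker}(1-\sigma\colon k\to k)\cong\Z/p\Z$, compatibly with the trace. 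The delicate points are verifying this adjointness of $C$ against $F$ under the wedge product (so that the lemma genuinely applies to the hypercohomology spectral sequences of the two-term complexes), and checking that passing to $\varinjlim_n$ on the $\mathscr{F}$-side and $\varprojlim_n$ on the $\mathscr{G}$-side interchanges the roles of kernel and cokernel correctly, so that the discrete group $H^i(U,\DRWlog m r U)$ is paired with the profinite group $\varprojlim_D H^{d+1-i}(X,\DRWlog m{d-r}{X|D})$; the finiteness established above is what legitimizes this interchange. Once these compatibilities are in place, perfectness and the isomorphism onto the Pontryagin dual follow.
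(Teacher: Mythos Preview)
Your proposal is correct and follows essentially the same two-step strategy as the paper: reduce to $m=1$ via the ladder built from Theorem~\ref{Kummerlogform} and the sequence $0\to\Omega^r_{U,\log}\xrightarrow{\ul p^{m-1}}W_m\Omega^r_{U,\log}\xrightarrow{R}W_{m-1}\Omega^r_{U,\log}\to 0$, then for $m=1$ invoke Grothendieck--Serre duality for the perfect coherent pairings $\Omega^r_{X|D_{-\un}}\times\Omega^{d-r}_{X|D_{\un+\ul1}}\to\Omega^d_X$ and $Z\Omega^r_{X|D_{-\un}}\times\Omega^{d-r}_{X|D_{\un+\ul1}}/d\Omega^{d-r-1}_{X|D_{\un+\ul1}}\to\Omega^d_X$. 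The only notable simplification in the paper is that your ``finite-field lemma'' step is replaced by the bare observation that a $k$-linear isomorphism $W\cong\Hom_k(V,k)$ is the same datum as an $\Fp$-linear isomorphism $W\cong\Hom_{\Fp}(V,\Fp)$, which immediately promotes the term-wise Serre dualities to the hypercohomology isomorphism $\varprojlim_n\mathbb{H}^{d+1-i}(X,W_1\mathscr{G}^{d-r,\bullet}_{\un+\ul1})\cong(\varinjlim_n\mathbb{H}^i(X,W_1\mathscr{F}^{r,\bullet}_{-\un}))^\vee$; the adjointness of $1-C$ against $1-F$ that you flag is genuinely needed here and is exactly the compatibility \eqref{compatibility} built into the two-term pairing.
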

	
	The proof is divided into two steps, the first step is to reduce the theorem to the case where $m=1$;  then we prove this special case in the second step.
	
	\begin{proof}
		\textbf{Step 1:} We have the following commutative diagram with exact rows
		\[ \xymatrix@C=.5cm@R=.9cm{  \scriptstyle\cdots \ar[r] & \scriptstyle \varprojlim\limits_D H^{d+1-i}(X, \DRWlog {m-1} {d-r} {X|D}) \ar[d]  \ar[r] &\scriptstyle\varprojlim\limits_D H^{d+1-i}(X, \DRWlog m {d-r} {X|D}) \ar[d]\ar[r] &\scriptstyle \varprojlim\limits_D H^{d+1-i}(X, \DRlog  {d-r} {X|D}) \ar[d] \ar[r] &\scriptstyle \cdots\\
			\scriptstyle \cdots \ar[r] & \scriptstyle H^i(U, \DRWlog {m-1} r U)^{\vee} \ar[r]^{R^{\vee}} & \scriptstyle H^i(U, \DRWlog m r U)^{\vee} \ar[r]^{(\underline{p}^{m-1})^{\vee}} &\scriptstyle H^i(U, \DRlog  r U)^{\vee} \ar[r] &\scriptstyle \cdots,
		}\]
		where the first row is induced by Theorem \ref{Kummerlogform}, and it is exact since the inverse limit is exact for projective system of finite groups. 
		The exactness of the second row is clear.
		Using this commutative diagram and induction on $m$, we reduce our question to the case $m=1$.
		\medbreak
		
		\textbf{Step 2:}
		
		For $m=1$ the pairing (\ref{Wm-2-pairing}) is identified with
		\begin{equation*}
		[Z\DR r {X|D_{-\underline n}}\xrightarrow{\scriptscriptstyle{1-C}} \DR r {X|D_{-\underline n}} ]  \times  [\DR {d-r}{X|D_{\underline n +\underline 1}}\xrightarrow{\scriptscriptstyle{F-1}} \DR {d-r}{X|D_{\underline n+\underline 1}}/d\DR {d-r-1} {X|D_{\underline n+\underline 1}}]  \to [\Omega^d_X \xrightarrow{1-C} \Omega^d_X], 
		\end{equation*}
		where for any $\underline{n}=\nlam \in \mathbb{N}^{\Lambda}$ (cf. the notation in (\ref{finite.WFn})),
		\[\begin{aligned}
		&\DR{r}{X|D_{\underline n}}=\Omega_X^r(\log D)\otimes \cO_X(-D_{\underline n}),\\
		&Z\DR r {X|D_{-\underline{n}}}=
		\Ker(d: \DR{r}{X|D_{\underline n}}\to j_*\DR{r+1}{U}),\;\;
		d\DR {d-r-1} {X|\D n}=\Image(d:\DR {d-r-1} {X|\D n} \to \Omega_X^{d-r} ).\\
		\end{aligned}\]
		
		
		The perfectness of the pairings 
		\begin{equation}\label{perfect.logform.pairing}
		\DR r X (\log D) \otimes \DR {d-r} X (\log D)(-D) \to \DR {d} X (\log D)(-D)=\Omega^d_X. \end{equation}
		implies that the following pairings
		\begin{align}
		\DR r {X|D_{-\underline{n}}} &\otimes \DR {d-r} {X|D_{\underline{n}+\underline{1}}} \to \DR {d} {X|\D 1}=\DR d X,\; (\xi,\eta) \mapsto \xi\wedge\eta  ; \label{twisted.pairing-1}\\
		Z\DR r {X|D_{-\underline{n}}} &\otimes \DR {d-r} {X|D_{\underline{n}+\underline{1}}} /d\DR {d-r-1} {X|D_{\underline{n}+\underline{1}}} \to \DR {d} {X|\D 1}=\DR d X,\; (\xi, \eta) \mapsto -C(\xi \wedge \eta); \label{twisted.pairing-2}
		\end{align}
		are perfect. In fact the perfectness of the pairing (\ref{twisted.pairing-2}) follows from \cite[Lem. 1.7]{milnesurface}.

		By Grothendieck-Serre duality, we obtain the following isomorphisms as $k$-vector spaces,
		\[ H^i(X, \DR r {X|D_{-\underline n}}) \cong H^{d-i}(X, \DR {d-r}{X|D_{\underline n+\underline 1}})^* ,\] and \[H^i(X, Z\DR r {X|D_{-\underline n}}) \cong H^{d-i}(X, \DR {d-r}{X|D_{\underline n+\underline 1}}/d\DR {d-r-1} {X|D_{\underline n+\underline 1}})^*. \]
		Note that, for any two $k$-vector spaces $V$ and $W$, an isomorphism of $k$-vector spaces \[ W\cong\Hom_k(V,k)=:V^\ast\] uniquely corresponds to an isomorphism of $\Fp$-vector spaces \[ W\cong\Hom_{\Fp}(V,\Fp)=:V^{\vee}.\]
		The above two isomorphisms give the isomorphism $(1)$ in the following commutative diagram:
		\[ \xymatrix{
			\varprojlim\limits_{n}\mathbb{H}^{d+1-i}(X,W_1\mathscr{G}^{d-r,\bullet}_{ \un+\ul 1}) \ar[d]_-{\cong}^-{(2)} \ar[r]^-{\cong}_-{(1)} &(\varinjlim\limits_{n}\mathbb{H}^i(X, W_1\mathscr{F}^{r,\bullet}_{-\un}))^{\vee}\ar@{=}[d]^-{(3)}\\
			\varprojlim\limits_{\underline{n}}H^{d+1-i}(X,\DRlog {d-r}{X|D_{\underline n +\underline 1}})\ar@{=}[d]  &  (\mathbb{H}^i(X, [j_*Z\DR r U \xrightarrow{1-C} j_*\DR r U]))^{\vee}\\
			\varprojlim\limits_{D}H^{d+1-i}(X,\DRlog {d-r}{X|D}) \ar[r]  & H^{i}(U,\DRlog r U)^{\vee}\ar[u]^{\cong}_{(4)},
		}\]
		where the isomorphism (2) is induced by Theorem \ref{logtocoherent},  (3) follows from the observation that  $j_*\DR r U =\varinjlim_{\underline n}\DR r {X|D_{-\underline n}}$, and the isomorphism (4) is due to the fact that $Rj_*\Omega_{U,\log}^r \cong [j_*Z\DR r U \xrightarrow{1-C} j_*\DR r U]$. Therefore the last horizontal map is an isomorphism.

	\end{proof}

	In particular, for $i=1$ and $r= 0$ we get isomorphisms
	\begin{equation*}
	\varprojlim\limits_{D} H^d(X,\DRWlog m d {X|D})\xrightarrow{\cong} H^1(U,\Z/p^m\Z)^\vee \cong \pi^{ab}_1(U)/p^m,
	\end{equation*}
	and 
	\begin{equation*}
	H^1(U,\Z/p^m\Z) \xrightarrow{\cong} \varinjlim\limits_{D} H^d(X,\DRWlog m d {X|D})^{\vee}
	\end{equation*}
	
	These isomorphisms can be used to define a measure of ramification for  \'{e}tale abelian covers of $U$ whose degree divides $p^m$.
	\begin{definition}\label{new.filtration}
		For any $D\in \Sigma$, we define
		\[  \Fil_DH^1(U,\Z/p^m\Z):= H^d(X,\DRWlog m d {X|D})^{\vee},\]
		\[  \Fil_DH^1(U,\Q/\Z): =H^1(U,\Q/\Z)\{p'\} \bigoplus \bigcup_{m\geq 1} \Fil_DH^1(U,\Z/p^m\Z), \]
		where $H^1(U,\Q/\Z)\{p'\}$ is the prime-to-$p$ part of $H^1(U,\Q/\Z)$.
		Dually we define
		\[   \pi^{\text{ab}}_1(X,D)/p^m :=\Hom(\Fil_DH^1(U,\Z/p^m\Z),\Z/p^m\Z),  \]	
		\[ \pi^{\text{ab}}_1(X,D):=\Hom(\Fil_DH^1(U,\Q/\Z),\Q/\Z). \]
		
	\end{definition}
	The group $ \pi^{\text{ab}}_1(X,D)/p^m$ is a quotient of $\pi_1^{\text{ab}}(U)/p^m$, which can be thought of as classifying abelian \'etale coverings of $U$ whose degree divides $p^m$ with ramification bounded by $D$. These groups are important objects in higher-dimensional class field theory.

	\bigskip
	
	\section{Duality over perfect fields}
	\bigskip
	When the base field $k$ is finite, our duality theory is formulated by 
	endowing the cohomology groups with the structure of topological groups. When the base field $k$ is not finite, it is necessary to endow the cohomology groups with stronger structures, namely the structures of pro-algebraic and ind-algebraic groups, and use Breen-Serre duality instead of Pontryagin duality. 
	In this section, $k$ denotes a perfect field of characteristic $p>0$, not necessarily finite, and we put $S=\Spec(k)$.

	\subsection{The relative perfect \'etale site}\label{relative.perfect.site}
	
	Recall a scheme $T$ is said to be \emph {perfect} if the absolute Frobenius $F: T\to T$ is an isomorphism.  
	For any $S$-scheme $X$, the \emph{perfection} $X^{\text{pf}}$ of $X$ is the projective limit of the system
	\[  X_{\text{red}}\xleftarrow{F} X^{(p^{-1})}_{\text{red}} \xleftarrow{F}\cdots \xleftarrow{F} X^{(p^{-n})}_{\text{red}} \xleftarrow{F}\cdots, \]
	where $X^{(p^{-n})}_{\text{red}} $ is the scheme $X_{\text{red}}$ with the structure map $F^n\circ \pi: X \to S$. It is a perfect scheme, and has the universal property that 
	\[ \Hom_S(X,Y)=\Hom_S(X^{\text{pf}}, Y) \]
	for any perfect $S$-scheme $Y$. 
	A perfect $S$-scheme $X$ is said to be \emph{algebraic} if it is the perfection of a scheme of finite type over $S$. 
	One sees easily that the perfect algebraic group schemes over $S$ form an abelian category. Let $(\Pf/S)_{\text{\'et}}$  be the \emph{perfect \'etale site} over $S$, i.e., the category of perfect schemes over $S$ with \'etale topology. 
	\medbreak
	
	In what follows we fix a smooth proper morphism $\pi: X \to S$ and an effective divisor $D$ such that Supp($D$) is a simple normal crossing divisor on $X$.
	Let $j: U:=X-D \hookrightarrow X$ be the complement of $D$. 
	Let $(\Pf X/S)_{\text{\'et}}$  be the \emph{relative perfect \'etale site} over $X/S$, i.e., the category of pairs $(T,Y)$, where $T$ is a perfect scheme over $S$ and $Y$ is \'etale over $X\times_ST$ equipped with \'etale topology. 
	We define $\cX$ and $\cS$ to be the category of abelian sheaves on $(\Pf X/S)_{\text{\'et}}$ and on $(\Pf/S)_{\text{\'et}}$, respectively. For any integer $m\geq 1$, we denote $\cX(p^m)$ (resp. $\cS(p^m)$) to be the category of sheaves of $\Z/p^m\Z$-modules on $(\Pf X/S)_{\text{\'et}} $ (resp. $(\Pf/S)_{\text{\'et}}$).
	The structure morphism $\pi: X\to S$ induces a morphism of sites
	\[ \pi: (\Pf X/S)_{\text{\'et}} \to (\Pf/S)_{\text{\'et}},\;  (T,Y) \mapsto T,\]
	which gives rise to adjoint functors
	\[ \xymatrix{ \pi_* : \cX \ar@<-0.5ex>[r]  &\cS : \pi^* \ar@<-0.5ex>[l]      &\text{and}  & \pi_* : \cX(p^m) \ar@<-0.5ex>[r]  &\cS(p^m) : \pi^*. \ar@<-0.5ex>[l]  }\]
	
	Definition \ref{relative.definition} gives an object $\DRWlog{m}{r}{X|D}$ of 
	$\cX(p^m)$ such that $R^i\pi_*\DRWlog{m}{r}{X|D}$ is the sheaf on $(\Pf/S)_{\text{\'et}}$ associated to the presheaf 
	\[T \mapsto H^i(X_T, \DRWlog m r {X_T|D_T})\quad (T\in Ob((\Pf/S)_{\text{\'et}})).\]

	\subsection{Duality theorem}

	By \eqref{finite.WFn}  we have an isomorphism
	\[ \varinjlim_{\un}W_m\mathscr{F}^{r,\bullet}_{-\un}=[j_*Z_1\DRW m r U \xrightarrow{1-C} j_*\DRW m r U] \cong  Rj_*\DRWlog m r U,\]
	where the second isomorphism follows from the fact that $j$ is affine. Therefore
	\begin{equation}\label{Fn=Ulog} 
	R\varinjlim_{\un} R\pi_*W_m\mathscr{F}^{r,\bullet}_{-\un}= R\pi_*Rj_*\DRWlog m r U \in D^b(\cS(p^m)), 
	\end{equation}
	since $R\varinjlim_{\un}$ commutes with $R\pi_*$.
	By \eqref{Wm-2-pairing} and \eqref{finite.WHn-log}, we have a map 
	\begin{equation*}
	R\pi_*W_m\mathcal{G}^{d-r,\bullet}_{\un+\ul{1}} \to R\sHom_{\cS}(R\pi_*W_m\mathcal{F}^{r,\bullet}_{-\un}, R\pi_*\DRWlog m d X).
	\end{equation*}
	By taking the homotopy limit $R\varprojlim_{\un}$ on both sides, we obtain a map
	\[\begin{aligned} 
	R\varprojlim_{\un}R\pi_*W_m\mathscr{G}_{\un+\ul{1}}^{d-r, \bullet}  \to  &R\varprojlim_{\un}R\sHom_{\cS}(R\pi_*W_m\mathscr{F}^{r,\bullet}_{-\un}, R\pi_*\DRWlog m d X)\\
	&\simeq R\sHom_{\cS}(R\varinjlim_{\un}R\pi_*W_m\mathscr{F}^{r,\bullet}_{-\un}, R\pi_*\DRWlog m d X) \\
	&\simeq R\sHom_{\cS}(R\pi_* Rj_* \DRWlog m r U, R\pi_*\DRWlog m d X) \\
	&\to R\sHom_{\cS}(R\pi_* Rj_* \DRWlog m r U, \Z/p^m\Z)[-d] 
	\end{aligned}\]
	where the second isomorphism follows from \eqref{Fn=Ulog} and the last map is induced by the trace map $\text{Tr}: R\pi_*\DRWlog m d X \to\Z/p^m\Z[-d]$.
	Thus Theorem \ref{exactproobj} gives rise to a map 
	\begin{equation}\label{perfect.duality}
	R\varprojlim\limits_{D} R\pi_*\DRWlog m {d-r} {X|D} \to R\sHom_{\cS(p^m)}( R\pi_*Rj_* \DRWlog m r U , \Z/p^m\Z)[-d].
	\end{equation}

	\begin{theorem}\label{duality.thm.perfect}
		The map (\ref{perfect.duality}) is an isomorphism in $D^b(\cS(p^m))$.
	\end{theorem}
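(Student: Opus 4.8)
The plan is to mirror the structure of the finite-field proof of Theorem \ref{duality.thm.finite}, replacing Pontryagin duality of finite groups by Breen--Serre duality on the perfect \'etale site $(\Pf/S)_{\text{\'et}}$. To show that (\ref{perfect.duality}) is an isomorphism in $D^b(\cS(p^m))$ it suffices to show it induces isomorphisms on all cohomology sheaves $\mathcal H^i$, and for this I would exhibit both sides as complexes of perfect (pro-/ind-)algebraic group schemes and check perfectness of the underlying pairing. The strategy proceeds in two stages: a d\'evissage reducing to $m=1$, and a computation in the $m=1$ case via relative Grothendieck--Serre duality, upgraded to the stated $R\sHom(-,\Z/p^m\Z)$-duality by Breen--Serre duality.

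First I would reduce to $m=1$. Theorem \ref{Kummerlogform} gives, for each $D$, the short exact sequence $0\to \WXDplog{m-1}{r}\xrightarrow{\ul p}\WXDlog m r\to\WXDlog 1 r\to 0$, and the analogous sequence on the open part for $\DRWlog m r U$. Since $[D/p]$ ranges over all effective divisors supported on $X-U$ as $D$ does, applying $R\pi_*$ and the homotopy limit $R\varprojlim_D$ on the relative side (resp. $R\pi_*Rj_*$ on the open side) yields distinguished triangles in $D^b(\cS(p^m))$. The map (\ref{perfect.duality}) is compatible with these triangles, and because the contravariant functor $R\sHom_{\cS(p^m)}(-,\Z/p^m\Z)$ sends triangles to triangles, one obtains a morphism of distinguished triangles. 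By the triangulated five lemma, together with the standard compatibility of the duality functors for varying $m$, the isomorphy of (\ref{perfect.duality}) for $m$ follows from the cases $m-1$ and $1$, hence by induction from $m=1$.

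For $m=1$, I would invoke Theorem \ref{logtocoherent} to replace $\DRWlog 1 {d-r}{X|D}$ and $Rj_*\DRlog r U$ by the two-term complexes of coherent $\cO_X$-modules $W_1\mathscr{G}^{d-r,\bullet}_{\un+\ul 1}$ and $W_1\mathscr{F}^{r,\bullet}_{-\un}$ already appearing in the pairing (\ref{Wm-2-pairing}). The perfect pairings of locally free sheaves (\ref{perfect.logform.pairing}), together with (\ref{twisted.pairing-1}) and (\ref{twisted.pairing-2}) (the latter via \cite[Lem.~1.7]{milnesurface}), make these complexes Serre-dual to one another. Since over $(\Pf/S)_{\text{\'et}}$ the functor $R\pi_*$ of a coherent complex is representable by a perfect vector group, relative Grothendieck--Serre duality for the smooth proper $\pi$ (with dualizing object $\DR d X[d]$) identifies $R\varprojlim_D R\pi_*$ of the coherent side with the Serre dual of $R\pi_*Rj_*$ of the other side, landing in $R\pi_*\DRWlog 1 d X$ via the isomorphism (\ref{finite.WHn-log}) and the trace.

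The final and most delicate step, where the content of \S\ref{relative.perfect.site} is concentrated, is to match this coherent ($\mathbb{G}_a$-linear, Serre-theoretic) duality with the internal duality $R\sHom_{\cS(p^m)}(-,\Z/p^m\Z)[-d]$ of the statement. Here Breen--Serre duality is essential: the cohomology sheaves $R^i\pi_*$ of all complexes in play are representable by perfect quasi-algebraic groups, and for such groups $R\sHom(-,\Z/p^m\Z)$ realizes precisely the duality that Serre duality induces on the underlying coherent data, provided one keeps careful track of the operators $1-C^{-1}$ and $1-F$ relating the logarithmic sheaves to their coherent two-term resolutions. I would therefore verify that (\ref{perfect.duality}) is a morphism of complexes of perfect group schemes and conclude its isomorphy from the perfectness of the coherent pairing via Breen--Serre duality, fibrewise over $(\Pf/S)_{\text{\'et}}$. \textbf{The hard part is exactly this matching of Serre duality with Breen--Serre duality in families}; the remaining technical burden lies in the commutation of $R\varprojlim_D$ with $R\pi_*$ and with $R\sHom$, and in the Mittag--Leffler type vanishing that controls the homotopy limit, both of which follow from the algebraic-group structure of the terms but must be checked with care.
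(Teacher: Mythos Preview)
Your proposal is correct and follows essentially the same route as the paper: reduce to $m=1$ via Theorem~\ref{Kummerlogform} and the triangulated five lemma, then for $m=1$ resolve both sides by the two-term complexes $W_1\mathscr{F}^{r,\bullet}_{-\un}$ and $W_1\mathscr{G}^{d-r,\bullet}_{\un+\ul 1}$ and use the perfect coherent pairings (\ref{twisted.pairing-1}), (\ref{twisted.pairing-2}).

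The one point worth noting is that the step you single out as ``the hard part''---matching coherent Serre duality with the internal duality $R\sHom_{\cS(p)}(-,\Z/p\Z)$ on the perfect \'etale site---is not something you need to build from scratch. This is precisely the content of a result of Milne \cite[Prop.~2.1]{milnesurface} and Berthelot \cite[Lem.~3.6]{berthelotmilne}: for any locally free $\cO_X$-module $\cL$ of finite rank, the pairing $\cL\times(\cL^\vee\otimes\Omega^d_X)\to\DRlog d X[1]$ together with the trace induces an isomorphism $R\pi_*\cL\cong R\sHom_{\cS(p)}(R\pi_*(\cL^\vee\otimes\Omega^d_X),\Z/p\Z)[-d+1]$. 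The paper simply invokes this proposition, applies it to the terms of the two-term complexes, and then passes to the homotopy limit; the commutation $R\varprojlim_{\un} R\sHom(-,-)\simeq R\sHom(R\varinjlim_{\un}-,-)$ is formal. So your technical worries about Breen--Serre matching and Mittag--Leffler control dissolve once you cite this result.
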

	\begin{proof}\renewcommand{\qedsymbol}{}
		By the same method as in the proof of Theorem  \ref{duality.thm.finite}, 
		we reduce the claim to the case $m=1$. We then use the following result
		from \cite[Prop. 2.1]{milnesurface}, \cite[Lem. 3.6]{berthelotmilne}.
	\end{proof}
	
	\begin{proposition}
		Let $\cL$ be a locally free $\cO_X$-module of finite rank and put $\cL^{\vee}=\sHom_{\cO_X}(\cL, \cO_X)$. Then the natural pairing 
		\[ \cL \times (\cL^{\vee}\otimes \DR d X)  \to [\DR d X \xrightarrow{1-C} \DR d X][1] \cong \DRlog d X [1] \]
		and the trace map $R\pi_*\DRlog d X \to \Z/p\Z[-d]$ induces an isomorphism
		\[ R\pi_*\cL \xrightarrow{\cong} R\sHom_{\cS(p)}(R\pi_*(\cL^{\vee}\otimes\DR d X), \Z/p\Z)[-d+1]. \]
	\end{proposition}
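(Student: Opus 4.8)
The plan is to deduce the statement by combining Grothendieck--Serre duality for the coherent cohomology of $\pi\colon X\to S$ with Serre's duality for perfect unipotent group schemes, the latter implemented through the Artin--Schreier sequence; this is the mechanism behind \cite[Prop. 2.1]{milnesurface} and \cite[Lem. 3.6]{berthelotmilne}. A preliminary point, used throughout, is that because $X/k$ is smooth and proper and $k$ is perfect, the objects $R^i\pi_*\cL$ and $R^i\pi_*(\cL^\vee\otimes\DR d X)$ of $\cS(p)$ are represented by perfect (quasi-)algebraic vector group schemes, obtained as perfections of the vector groups attached to the finite-dimensional cohomology spaces; their formation is compatible with the base changes defining $(\Pf/S)_{\text{\'et}}$. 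Thus both sides of the asserted isomorphism live in the subcategory of $D^b(\cS(p))$ generated by such vector groups, where duality is well behaved.

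First I would record the coherent input. Since $\pi$ is smooth and proper of relative dimension $d$ with relative dualizing sheaf $\DR d X$, Grothendieck--Serre duality furnishes a natural perfect cup-product pairing
\[ R\pi_*\cL\ \otimes^{L}\ R\pi_*(\cL^\vee\otimes\DR d X)\longrightarrow R\pi_*\DR d X, \qquad R^d\pi_*\DR d X\xrightarrow{\ \mathrm{tr}\ }\mathbb{G}_a, \]
equivalently a canonical isomorphism $R\pi_*\cL\xrightarrow{\cong}\bigl(R\pi_*(\cL^\vee\otimes\DR d X)\bigr)^{*}[-d]$, where $(-)^{*}$ is the linear (Serre) dual of a perfect vector group. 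This is exactly the pairing $\cL\otimes(\cL^\vee\otimes\DR d X)\to\DR d X$ named in the statement, taken before the map into $\DRlog d X[1]$.

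Second I would convert this $\mathbb{G}_a$-valued duality into a $\Z/p\Z$-valued one. The identification $\DRlog d X[0]\cong[\DR d X\xrightarrow{1-C}\DR d X]$ (the case $r=d$, $D=\varnothing$ of \eqref{finite.WHn-log} and \cite[Lem. 1.1]{milneduality}) and the Artin--Schreier sequence $0\to\Z/p\Z\to\mathbb{G}_a\xrightarrow{F-1}\mathbb{G}_a\to 0$ on $(\Pf/S)_{\text{\'et}}$ together yield the perfect-site duality computation of the cited references: for a perfect vector group $V$ one has $R\sHom_{\cS(p)}(V,\Z/p\Z)\cong V^{*}[-1]$, the shift $[-1]$ being the Artin--Schreier degree and $V^*$ the same linear dual as above. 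Combining this with the coherent isomorphism of the first step gives
\[ R\sHom_{\cS(p)}\bigl(R\pi_*(\cL^\vee\otimes\DR d X),\Z/p\Z\bigr)[-d+1]\ \cong\ \bigl(R\pi_*(\cL^\vee\otimes\DR d X)\bigr)^{*}[-d]\ \cong\ R\pi_*\cL, \]
so that the shift $[-d+1]$ decomposes as $[-d]$ from coherent duality and $[+1]$ undoing the Artin--Schreier shift. One then checks that this composite is the map induced by precisely the pairing of the statement together with the trace $\mathrm{Tr}\colon R\pi_*\DRlog d X\to\Z/p\Z[-d]$, so that we obtain the named natural map and not merely an abstract isomorphism.

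I expect the main obstacle to be the second step: one must verify that the coherent objects $R\pi_*\cL$ genuinely belong to the class of perfect vector group schemes for which the duality $R\sHom_{\cS(p)}(V,\Z/p\Z)\cong V^{*}[-1]$ holds (representability, together with the $\mathcal{E}xt$-computations for $\mathbb{G}_a$ in $\cS(p)$ underlying the references), and, most delicately, that under the Serre-duality and Artin--Schreier identifications the coherent trace $R^d\pi_*\DR d X\to\mathbb{G}_a$ is compatible with the logarithmic trace $\mathrm{Tr}\colon R\pi_*\DRlog d X\to\Z/p\Z[-d]$ along the complex $[\DR d X\xrightarrow{1-C}\DR d X]\cong\DRlog d X$. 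Once these compatibilities are established, the isomorphism of the Proposition follows formally from the two dualities, which is exactly the $m=1$ input needed to finish Theorem \ref{duality.thm.perfect}.
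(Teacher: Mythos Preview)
Your proposal is correct and follows exactly the strategy of the references the paper cites. Note that the paper itself gives no proof of this proposition: it is simply quoted as the known result \cite[Prop.~2.1]{milnesurface}, \cite[Lem.~3.6]{berthelotmilne}, and your sketch---Grothendieck--Serre duality to identify $R\pi_*\cL$ with the linear dual of $R\pi_*(\cL^\vee\otimes\Omega^d_X)$ shifted by $-d$, then Breen--Serre duality $R\sHom_{\cS(p)}(V,\Z/p\Z)\cong V^*[-1]$ for perfect vector groups via Artin--Schreier---is precisely the content of those references.
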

	\begin{corollary}
		The perfect pairings (\ref{twisted.pairing-1}) and (\ref{twisted.pairing-2}) induces isomorphisms
		\[ R\pi_*\DR {d-r} {X|D_{\underline{n}+\underline{1}}} \xrightarrow{\cong} R\sHom_{\cS(p)}(R\pi_*\DR r {X|D_{-\underline{n}}}, \Z/p\Z)[-d+1]; \]
		\[ R\pi_*\DR {d-r} {X|D_{\underline{n}+\underline{1}}} /d\DR {d-r-1} {X|D_{\underline{n}+\underline{1}}} \xrightarrow{\cong} R\sHom_{\cS(p)}(R\pi_*Z\DR r {X|D_{-\underline{n}}}, \Z/p\Z)[-d+1].\]
		Therefore, we have an isomorphism
		\[  R\pi_*W_1\mathscr{G}^{\bullet}_{ \un+\ul{1}}   \xrightarrow{\cong} R\sHom_{\cS(p)}(R\pi_*W_1\mathscr{F}^{\bullet}_{-\un} ,\Z/p\Z)[-d],  \]
		where $W_1\mathscr{F}^{r,\bullet}_{-\un} $ and $W_1\mathscr{G}^{d-r,\bullet}_{\un+\ul 1}$ were defined in (\ref{finite.WFn}) and (\ref{finite.WGn}).
	\end{corollary}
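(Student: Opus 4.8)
The plan is to read both displayed isomorphisms off the preceding Proposition together with the perfectness of the two $\cO_X$-bilinear pairings (\ref{twisted.pairing-1}) and (\ref{twisted.pairing-2}), and then to splice them into a duality of the two-term complexes $W_1\mathscr{F}^{r,\bullet}_{-\un}$ and $W_1\mathscr{G}^{d-r,\bullet}_{\un+\ul 1}$ of (\ref{finite.WFn}) and (\ref{finite.WGn}).

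First I would obtain the first isomorphism by applying the preceding Proposition to the locally free sheaf $\cL=\DR {d-r}{X|D_{\un+\ul 1}}$. The one thing to check is the identification $\cL^{\vee}\otimes\DR d X\cong\DR r{X|D_{-\un}}$, under which the evaluation pairing $\cL\times(\cL^{\vee}\otimes\DR d X)\to\DR d X$ becomes the wedge pairing (\ref{twisted.pairing-1}). This is a line-bundle bookkeeping: the perfect log pairing (\ref{perfect.logform.pairing}) gives $\sHom_{\cO_X}(\DR {d-r}X(\log D),\cO_X)\cong\DR r X(\log D)\otimes(\DR d X(\log D))^{-1}$, and since $\DR d X(\log D)\cong\DR d X\otimes\cO_X(D_{\ul 1})$ one finds, after twisting by $\cO_X(D_{\un+\ul 1})$ and then by $\DR d X$, exactly $\DR r X(\log D)\otimes\cO_X(D_{\un})=\DR r{X|D_{-\un}}$, the evaluation pairing going over to $\wedge$. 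The Proposition then yields the first isomorphism verbatim.

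For the second isomorphism the sheaf $Z\DR r{X|D_{-\un}}$ is only coherent (indeed not even an $\cO_X$-submodule of $\DR r{X|D_{-\un}}$), so the Proposition does not apply directly; instead I would rerun its proof, with Grothendieck--Serre duality for $\pi$ as the engine and the trace map together with the isomorphism $[\DR d X\xrightarrow{1-C}\DR d X][1]\cong\DRlog d X[1]$ as before. The extra input is that the pairing (\ref{twisted.pairing-2}) is perfect, i.e. \cite[Lem. 1.7]{milnesurface}; at the level of sheaves this says that $\DR {d-r}{X|D_{\un+\ul 1}}/d\DR {d-r-1}{X|D_{\un+\ul 1}}$ represents the full derived dual $R\sHom_{\cO_X}(Z\DR r{X|D_{-\un}},\DR d X)$ (the higher Ext-sheaves vanishing), which is precisely what feeds coherent duality in the non-locally-free case, exactly as in the cohomological computation of Step 2 of Theorem \ref{duality.thm.finite}.

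Finally I would assemble the two isomorphisms. By construction the two pairings form a pairing of two-term complexes $W_1\mathscr{F}^{r,\bullet}_{-\un}\times W_1\mathscr{G}^{d-r,\bullet}_{\un+\ul 1}\to W_1\mathscr{H}^{\bullet}$ in the sense of (\ref{compatibility}), the compatibility being exactly the multiplicativity $C(x\wedge C^{-1}y)=Cx\wedge y$ of the Cartier operator (this is what forces the sign $-C$ in (\ref{twisted.pairing-2})). Hence the two displayed isomorphisms are the vertical arrows of a square in $D^b(\cS(p))$ whose horizontal arrows are $1-F$ on $R\pi_*W_1\mathscr{G}^{d-r,\bullet}_{\un+\ul 1}$ and the transpose $(1-C)^{\vee}$ of the differential of $R\pi_*W_1\mathscr{F}^{r,\bullet}_{-\un}$; this square commutes precisely by that multiplicativity. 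Passing to mapping cones then identifies $R\pi_*W_1\mathscr{G}^{d-r,\bullet}_{\un+\ul 1}$ with $R\sHom_{\cS(p)}(R\pi_*W_1\mathscr{F}^{r,\bullet}_{-\un},\Z/p\Z)[-d]$, the single-sheaf shift $[-d+1]$ combined with the degree shift $[-1]$ in the cone of a length-one complex giving $[-d]$. I expect the main obstacle to be twofold: verifying the derived perfectness of (\ref{twisted.pairing-2}) for the non-locally-free $Z\DR r{X|D_{-\un}}$ (the vanishing of the higher Ext-sheaves), and checking that the assembling square genuinely commutes in $D^b(\cS(p))$, i.e. tracking the interplay of $C$, $F=C^{-1}$ and the trace, signs included.
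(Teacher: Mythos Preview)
Your proposal is correct and matches the paper's (implicit) approach; the paper gives no separate proof of the Corollary, treating it as immediate from the preceding Proposition together with the perfectness of (\ref{twisted.pairing-1}) and (\ref{twisted.pairing-2}).

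One simplification is available for your second isomorphism. You write that the Proposition does not apply directly because $Z\DR r{X|D_{-\un}}$ is ``only coherent'' and not an $\cO_X$-submodule. In fact the Proposition does apply: over a perfect base, $Z\Omega^r_X$ and $\Omega^{d-r}_X/d\Omega^{d-r-1}_X$ (and their log-pole, line-bundle-twisted variants) are locally free $\cO_X$-modules once one uses the $\cO_X^p\cong\cO_X$-module structure coming from Frobenius; this is classical (via the Cartier isomorphism) and is precisely the setting of \cite[Lem.~1.7]{milnesurface}, which asserts that (\ref{twisted.pairing-2}) is a perfect $\cO_X$-bilinear pairing for that structure. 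So you may take $\cL=\DR{d-r}{X|D_{\un+\ul 1}}/d\DR{d-r-1}{X|D_{\un+\ul 1}}$ in the Proposition directly, with $\cL^{\vee}\otimes\DR d X\cong Z\DR r{X|D_{-\un}}$ via (\ref{twisted.pairing-2}); your ``rerun the proof'' detour works but is unnecessary, and the higher-Ext vanishing you flag as an obstacle is automatic from local freeness. Your assembly of the two isomorphisms into the duality of two-term complexes is exactly right.
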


	\begin{proof}[Proof of Theorem \ref{duality.thm.perfect}(continued)] By taking the limit, we obtain
		\begin{align*}
		R\varprojlim\limits_{D}R\pi_*\DRlog {d-r} {X|D} &\xrightarrow{\cong} R\varprojlim\limits_{\underline n}R\pi_*\DRlog {d-r} {X|D_{\underline{n}+\underline{1}}} \xrightarrow{\cong}R\varprojlim\limits_{\ul n}R\pi_*W_1\mathscr{G}^{d-r,\bullet}_{\un+\ul 1}\\
		&\xrightarrow{\cong}R\varprojlim\limits_{\ul n} R\sHom_{\cS(p)}(R\pi_*W_1\mathscr{F}^{r,\bullet}_{-\un},\Z/p\Z)[-d]\\
		&\xrightarrow{\cong}R\sHom_{\cS(p)}(R\varinjlim_{\underline n} R\pi_*W_1\mathscr{F}^{r,\bullet}_{-\un},\Z/p\Z)[-d]\\
		&\xrightarrow{\cong}R\sHom_{\cS(p)}( R\pi_*Rj_*\DRlog r U,\Z/p\Z)[-d]
		\end{align*}
		
		This is our theorem in the case that $m=1$.
	\end{proof}
	
	\begin{remark}
		In fact we can endow $R\varprojlim\limits_{D} R\pi_*\DRWlog m {d-r} {X|D}$ with a structure of a complex of proalgebraic groups, i.e.,  as an object in the bounded derived category of quasi-unipotent proalgebraic groups,   and similarly view $R\pi_*Rj_*\DRWlog m r U $ as an object in the bounded derived category of quasi-unipotent indalgebraic groups. Then Theorem \ref{perfect.duality} identifies $R\varprojlim\limits_{D} R\pi_*\DRWlog m {d-r} {X|D}$ with the Breen-Serre dual of $R\pi_*Rj_*\DRWlog m r U $(cf. \cite[\S 2.5]{pepin}).
	\end{remark} 
	
	\bigskip

	\newcommand{\etalchar}[1]{$^{#1}$}
	\providecommand{\bysame}{\leavevmode\hbox to3em{\hrulefill}\thinspace}
	\providecommand{\MR}{\relax\ifhmode\unskip\space\fi MR }
	\providecommand{\MRhref}[2]{%
		\href{http://www.ams.org/mathscinet-getitem?mr=#1}{#2}
	}
	\providecommand{\href}[2]{#2}


\end{document}